\newcommand{\tw}{\tilde{\xi}}  
\newcommand{\ii}{\mathbf{I}\,}  
\newcommand{\xtilde}{\tilde X}
\newcommand{\p}{\partial}
\newcommand{\ddt}{\frac{\partial}{\partial t}}
\newcommand{\Z}{\mathbb Z}
\newcommand{\cc}{\mathcal C}
\newcommand{\E}{\mathcal E}
\newcommand{\Q}{\mathbb Q}
\renewcommand{\H}{\mathcal H}
\newcommand{\I}{\mathbb I}
\newcommand{\K}{\mathcal K}
\newcommand{\D}{\mathcal D}
\newcommand{\Sm}{\mathcal S}
\newcommand{\C}{\mathbb C}
\newcommand{\R}{\mathbb R}
\newcommand{\rr}{\mathfrak R} 
\newcommand{\ep}{\varepsilon}
\newcommand{\eq}{\sim}
\renewcommand{\phi}{\varphi}
\newcommand{\tr}{\operatorname{tr}}
\newcommand{\Tr}{\operatorname{Tr}\,}
\newcommand{\btr}{\operatorname{Tr^{\flat}}\,}
\newcommand{\sbtr}{\operatorname{Str^{\flat}}\,}
\newcommand{\str}{\operatorname{str}\,}
\newcommand{\ind}{\operatorname{ind}}
\newcommand{\coker}{\operatorname{coker}}
\renewcommand{\Re}{\operatorname{Re}}
\newcommand{\Hom}{\operatorname{Hom}}
\newcommand{\spin}{\,\operatorname{spin}}
\newcommand{\supp}{\,\operatorname{supp}}
\newcommand{\sign}{\operatorname{sign}}
\newcommand{\txp}{\tilde X_+}
\newcommand{\xp}{X_+}
\newcommand{\xm}{X_-}
\newcommand{\zp}{Z_{\infty}}
\newcommand{\met}{\operatorname{{\mathcal R}}}
\newcommand{\w}{\widehat}
\newcommand{\ch}{\operatorname{ch}\,}
\newcommand{\conn}{\mathbin{\#}} 
\newcommand{\spinc}{\ifmmode{\operatorname{Spin}^c}\else{$\operatorname{spin}^c$\ }\fi}
\newcommand\barf{f} 
\newcommand{\diff}{\operatorname{Diff}}          
\newcommand\rep{\alpha}   
\newcommand{\mm}{\mathfrak M}
\newcommand{\mmp}{\mm^+}
\newcommand{\rrp}{\rr^+} 
\newcommand{\sss}{{S^2\hskip-2pt\times\hskip-2pt S^2}}
\newtheorem{theorem}{Theorem}[section]
\newtheorem{lemma}[theorem]{Lemma}
\newtheorem{proposition}[theorem]{Proposition}
\newtheorem{corollary}[theorem]{Corollary}
\newtheorem{thm}{Theorem}
\theoremstyle{definition}
\newtheorem{remark}[theorem]{Remark}
\title{An index theorem for end-periodic operators}
\thanks{The first author was partially supported by NSF Grant 0805841, the second author was partially supported by NSF Grant 1105234, and the third author was partially supported by NSF Grant 1065905 and the Max-Planck-Institut f\"ur Mathematik in Bonn, Germany}
\author[Tomasz Mrowka]{Tomasz Mrowka}
\address{Department of Mathematics\newline\indent Massachusetts Institute of 
Technology, Cambridge MA 02139}
\email{\rm{mrowka@mit.edu}}
\author[Daniel Ruberman]{Daniel Ruberman}
\address{Department of Mathematics, MS 050\newline\indent Brandeis
University \newline\indent Waltham, MA 02454}
\email{\rm{ruberman@brandeis.edu}}
\author[Nikolai Saveliev]{Nikolai Saveliev}
\address{Department of Mathematics\newline\indent
University of Miami \newline\indent PO Box 249085
\newline\indent Coral Gables, FL 33124}
\email{\rm{saveliev@math.miami.edu}}
\subjclass[2000]{58J20, 58J28, 58J35, 35K05}
\begin{document}
\begin{abstract}
We extend the Atiyah, Patodi, and Singer index theorem for first order differential operators from the context of manifolds with cylindrical ends to manifolds with periodic ends. This theorem provides a natural complement to Taubes' Fredholm theory for general end-periodic operators. Our index theorem
is expressed in terms of a new periodic eta-invariant that equals the Atiyah-Patodi-Singer eta-invariant in the cylindrical setting.  We apply this periodic eta-invariant to the study of moduli spaces of Riemannian metrics of positive scalar curvature.
\end{abstract}
\maketitle

\section{Introduction}
In this paper, we prove an index theorem for operators on end-periodic manifolds, generalizing the index theorem of Atiyah, Patodi, and Singer~\cite{aps:I}. 

The Atiyah--Patodi--Singer theorem applies to a first-order elliptic differential operator $A$ on a compact oriented manifold $Z$ with boundary $Y$ that has the form   
\begin{equation}\label{E:aps}
A = \sigma\left(\frac{\p}{\p \theta} - B\right)
\end{equation}

\medskip\noindent
on a collar neighborhood of $Y$. Here, $B$ is a self-adjoint elliptic operator on $Y$ and $\sigma$ is a bundle isomorphism. We orient $Y$ so that the outer normal vector $\p/\p \theta$ followed by the orientation of $Y$ gives the orientation on $X$ (this is different from the orientation convention of \cite{aps:I} hence the negative sign in \eqref{E:aps}). The theorem states that the index of $A$, with respect to a certain global boundary condition, is given by
\begin{equation}\label{E:apsindex}
\ind A\; =\; \int_Z \ii(A)\; -\; \frac 1 2\, (h_B + \eta_B (Y)).
\end{equation}

\medskip\noindent
In this formula, $\ii(A)$ is the local index form~\cite{atiyah-bott-patodi:heat,BGV} whose integral would give the index of $A$ if $Z$ were a closed manifold, $h_B = \dim \ker B$, and the $\eta$--invariant $\eta_B (Y)$ is the value at $s = 0$ of the meromorphic extension of the function
\[
\sum \;\sign \lambda\, |\lambda|^{-s}
\]

\medskip\noindent
defined for sufficiently large $\Re (s)$ by summing over the non-zero eigenvalues $\lambda$ in the (discrete and real) spectrum of $B$. 

Because of~\eqref{E:aps}, the operator $A$ extends to an operator (still denoted by $A$) on the non-compact manifold obtained from $Z$ by attaching a product end $\R_+ \times Y$. If $\ker B = 0$, the $L^2$--closure of $A$ is known to be Fredholm, and its index is again given by formula ~\eqref{E:apsindex}. With a proper interpretation of $\ind A$ as in \cite[Section 3]{aps:I}, this formula holds even when $\ker B \ne 0$ and the $L^2$--closure of $A$ fails to be Fredholm.

Manifolds with product ends are a special case of the end-periodic manifolds that we study in this paper. By an end-periodic manifold we mean an open Riemannian manifold with an end modeled on an infinite cyclic cover $\xtilde$ of a compact manifold $X$ associated with a primitive cohomology class $\gamma \in H^1 (X;\Z)$; the case of several ends, which plays an important role in Sections~\ref{S:rho} and~\ref{S:psc}, can be treated similarly. To be precise, such a manifold is of the form
\begin{equation}\label{E:zp}
\zp\; =\; Z \,\cup\,W_0\,\cup\,W_1\,\cup\,W_2\,\cup\ldots,
\end{equation}
where $W_k$ are isometric copies of the fundamental segment $W$ obtained by cutting $X$ open along a oriented connected submanifold $Y$ Poincar\'e dual to $\gamma$, and $Z$ is a smooth compact manifold with boundary $Y$. 

End-periodic elliptic operators on end-periodic manifolds were studied by Taubes \cite{taubes:periodic}. A fundamental example of such an operator would be an operator having the form~\eqref{E:aps} on a manifold with product end, and more general versions appear elsewhere in geometry~\cite{mazzeo-pollack:gluing,mazzeo-pollack-uhlenbeck:yamabe,miller,MRS}; see also the discussion at the end of Section \ref{S:mflds}. Taubes established conditions under which the $L^2$--closure of an end-periodic elliptic operator is Fredholm, and calculated the index of the anti-self-dual operator occurring in Yang--Mills theory; this naturally raises the question of evaluating the index in the general case.

In this paper we present an index theorem for certain end-periodic operators, generalizing the Atiyah-Patodi-Singer index theorem. Assume that $\zp$ is even dimensional, and let $\Sm = \Sm^+\,\oplus\,\Sm^-$ be an end-periodic $\Z/2$--graded Dirac bundle as in~\cite{lawson-michelson} with associated chiral Dirac operator 
\[
\D^+(\zp): C^{\infty}(\zp;\Sm^+) \to  C^{\infty}(\zp;\Sm^-).
\]
Typical examples would include the spin Dirac operator on a spin manifold and the signature operator; more generally, either of these operators twisted by a complex vector bundle with unitary connection would give a Dirac operator to which our theorem would apply.

To state our theorem, write $\gamma = [\,df]$ for a choice of smooth function $f: \tilde X \to \mathbb R$ lifting a circle-valued function on $X$. According to Taubes \cite[Lemma 4.3]{taubes:periodic} the $L^2$--closure of $\D^+(\zp)$ is Fredholm if and only if the operators 
\[
\D^+_z = \D^+(X) - \ln z\cdot df
\]
on the closed manifold $X$ obtained by Fourier--Laplace transform are invertible on the unit circle $|z| = 1$. As a consequence of this condition, the operator $\D^+(X)$ has index zero and hence its index form $\ii(\D^+(X))$ is exact. Our end-periodic index theorem is now as follows.

\begin{thm}\label{T:aps}
Suppose that the $L^2$--closure of the operator $\D^+(\zp)$ is Fredholm, and choose a form $\omega$ on $X$ such that $d\omega = \ii(\D^+(X))$. Then 
\smallskip
\begin{equation}\label{E:indexintro}
\ind \D^+ (\zp) = \int_Z \ii (\D^+(Z))\; -
\int_Y\omega + \int_X df\wedge\omega\; - \frac 1 2\,\eta (X),
\end{equation}
where
\begin{equation}\label{E:etaintro}
\eta (X) = \frac 1 {\pi i}\,\int_0^{\infty}\oint_{|z| = 1}\;
\Tr \left(df\cdot \D^+_z \exp (-t \D^-_z \D^+_z)\right)\,\frac {dz} z\,dt.
\end{equation}
\end{thm}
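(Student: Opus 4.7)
The plan is to adapt the classical heat-kernel proof of the APS theorem to the end-periodic setting, with Taubes' Fourier--Laplace transform playing the role that separation of variables plays in the cylindrical case. First I would establish a McKean--Singer formula
\[
\ind \D^+(\zp) \;=\; \int_{\zp} \str k_t(x,x)\,dx,
\]
where $k_t$ is the difference of the pointwise supertraces of the heat kernels of $\D^-\D^+$ and $\D^+\D^-$ on $\zp$. The Fredholm hypothesis on $\D^+(\zp)$, together with a spectral gap for the end-periodic Laplacian that follows from invertibility of $\D^+_z$ on $|z|=1$, should guarantee absolute convergence of this supertrace and its independence of $t$.

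Next I would split $\zp = Z \cup (\zp\setminus Z)$ and analyze the small-$t$ behavior on each piece. On $Z$, the usual local heat-kernel expansion yields $\int_Z \ii(\D^+(Z))$. On the periodic end, I would use the Fourier--Laplace transform to express the heat kernel on $\zp\setminus Z$ in terms of the family $\exp(-t\D^-_z\D^+_z)$ on $X$, converting the trace over the end into a contour integral $\oint_{|z|=1}$ of traces on $X$. Comparing with the exactness relation $d\omega = \ii(\D^+(X))$ and applying Stokes' theorem on the fundamental domain $W$, while tracking the advance of $f$ between adjacent domains $W_k$ and $W_{k+1}$, should yield the correction $-\int_Y\omega + \int_X df\wedge\omega$; the second term arises precisely because $f$ advances by one period across each fundamental domain, contributing $df\wedge\omega$ to the telescoped Stokes computation. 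The remaining time-dependent contribution assembles into the periodic eta-invariant: differentiating under the contour integral and identifying $df\cdot\D^+_z$ as the natural ``end-derivative'' factor produces formula (\ref{E:etaintro}), with $1/(\pi i)$ emerging from the Cauchy kernel $dz/z$ of the inverse Fourier--Laplace transform.

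The main obstacle, I expect, is the analytic justification at the two endpoints of the $t$-integral defining $\eta(X)$. At $t = 0$ the integrand $\Tr(df\cdot\D^+_z\exp(-t\D^-_z\D^+_z))$ has a heat-type singularity, and convergence after the contour integral over $|z|=1$ must be obtained by showing that the singular part cancels in $z$ after one uses the symmetry $z\mapsto\bar z$ together with the adjointness relation between $\D^+_z$ and $\D^-_z$; this parallels the APS argument ruling out odd powers of $t^{-1/2}$ in the tangential eta-integrand. At $t = \infty$, uniform invertibility of $\D^+_z$ on the circle provides a positive lower bound for $\D^-_z\D^+_z$ and hence exponential decay of the trace, together with a uniform bound justifying the exchange of $\oint$ with $\int_0^\infty$. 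Once these analytic points are settled, I would verify that on manifolds with genuine product ends, where the Fourier--Laplace contour integral reduces to a residue sum over the spectrum of the tangential operator $B$ in (\ref{E:aps}), the formula collapses to the classical APS expression, serving as a sanity check on signs and normalizations.
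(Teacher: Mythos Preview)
Your proposal contains a genuine gap at its very first step, and this gap is exactly where the periodic $\eta$--invariant lives.

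You claim that the Fredholm hypothesis gives absolute convergence of the supertrace integral $\int_{\zp}\str k_t(x,x)\,dx$ \emph{and its independence of $t$}. The second claim is false, and if it were true there would be no $\eta$--term at all. On a closed manifold the McKean--Singer trick works because $\Tr[\D^-,\D^+e^{-t\D^-\D^+}]=0$; on $\zp$ the relevant (regularized) trace is not a genuine trace, the commutator does not vanish, and
\[
\frac{d}{dt}\,\sbtr(e^{-t\D^2})\;=\;-\,\btr\bigl[\D^-,\D^+e^{-t\D^-\D^+}\bigr]\;\neq\;0.
\]
The index theorem is obtained by integrating this identity over $t\in(0,\infty)$: the $t\to\infty$ limit of $\sbtr$ is $\ind\D^+(\zp)$, the $t\to 0$ limit is $\int_Z\ii(\D^+(Z))$, and the entire right-hand side---the integrated commutator defect---\emph{is} the sum of the $\omega$--terms and $-\tfrac12\eta(X)$. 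So your later sentence about a ``remaining time-dependent contribution'' assembling into $\eta(X)$ directly contradicts your earlier assertion of $t$--independence; one of them has to go, and it is the $t$--independence.

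Concretely, what is missing from your outline is the commutator trace formula. One must compute $\btr[P,Q]$ for $P=\D^-$ and $Q=\D^+e^{-t\D^-\D^+}$ in terms of data on $X$. This is where the Fourier--Laplace transform really enters: a Parseval-type identity converts the off-diagonal tail of the commutator into a contour integral, and the factor $df$ appears because $\partial\D^-_z/\partial z = -df/z$, not from any small-$t$ expansion. The result has two pieces: one involving $\Tr(\partial_z P_z\cdot Q_z)$, which after $t$--integration gives $-\tfrac12\eta(X)$, and another involving $\int_{W_0} f\cdot\str K_{e^{-t\D_z^2}}$, whose $t\to 0$ limit is $\int_{W_0} f\cdot\ii(\D^+(X))$; Stokes' theorem on $W_0$ (using $f|_{Y_1}=f|_{Y_0}+1$) then produces $\int_Y\omega-\int_X df\wedge\omega$. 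Your heuristic about ``telescoping'' across fundamental domains is pointing at this last computation, but it sits inside the commutator term, not inside a small-$t$ analysis of a $t$--independent supertrace.
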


\medskip
We will refer to \eqref{E:etaintro} as the \emph{periodic $\eta$-invariant}. The form $\omega$ is known in the literature as the transgressed class, cf. Gilkey \cite{gilkey}. 

In the product end case, one can choose $X = S^1\times Y$ so that $\tilde X = \mathbb R \times Y$, and let $f: \tilde X \to \mathbb R$ be the projection onto the first factor. With our orientation conventions, $\D^+(\zp)$ is of the form \eqref{E:aps} where $B = -\D$ and $\D$ is the self-adjoint Dirac operator on $Y$; see Nicolaescu \cite[page 70]{nic:israel}. One can easily see that in this case the operators $\D^+_z$ are invertible on the unit circle if and only if $\ker \D = 0$. We then show in Section \ref{S:product} that $\eta (X) = \eta_{\,\D} (Y)$.  A similar `spectral' interpretation of \eqref{E:etaintro} in general end-periodic case will be given in Section \ref{S:end-eta}.

How restrictive the Fredholmness condition in Theorem \ref{T:aps} is varies from one operator to another. For instance, the $L^2$--closure of the spin Dirac operator on a spin manifold $\zp$ of dimension at least 4 is Fredholm for a generic end-periodic metric, provided a certain topological obstruction vanishes; see \cite{ruberman-saveliev:periodic}. On the other hand, the $L^2$--closure of the signature operator on $\zp$ is never Fredholm. This explains the need to extend our Theorem \ref{T:aps} to the situations when the $L^2$--closure of $\D^+(\zp)$ is not Fredholm. We will discuss such an extension in Section \ref{S:non}, for a properly interpreted index analogous to the one treated in~\cite[\S3]{aps:I}.

It is worth mentioning that the index theorem of Atiyah, Patodi and Singer is just one of multiple index theorems generalizing the original Atiyah-Singer index theorem \cite{as:III} from compact to non-compact manifolds. In many of these generalizations, the operator is no longer required to be Fredholm but the index is interpreted through some kind of averaging procedure; examples include Atiyah's index theorem for coverings \cite{atiyah} and Roe's index theorem for certain open manifolds \cite{roe1,roe2}. Despite the common use of Fourier transform methods, our theorem is of a different nature; in fact, it is nearest to the classical case in that the operator under consideration is actually Fredholm, and its index is given by a formula similar to that of Atiyah, Patodi and Singer \cite{aps:I} with a different correction term.


\subsection{An outline of the proof}
Our proof of Theorem \ref{T:aps} is an adaptation to the end-periodic case of Melrose's proof of Atiyah--Patodi--Singer theorem \cite{melrose:aps}. Let $\D$ be the full Dirac operator on the end-periodic manifold $\zp$. The operator $\exp(-t \D^2)$ with $t > 0$ has a smoothing kernel $K(t;x,y)$. Unlike in the compact case, this does not mean that $\exp(-t \D^2)$ is trace class because $\tr(K(t;x,x))$ need not be an integrable function of $x \in Z_{\infty}$. To rectify this problem, we define in Section \ref{S:b-trace} a regularized trace $\btr$, and show that the associated supertrace 
\[
\sbtr(\exp(-t\D^2)) = \btr(\exp(-t\D^-\D^+)) - \btr(\exp(-t\D^+\D^-))
\]
has the desired properties that 
\[
\lim_{t\to 0}\;\, \sbtr(\exp(-t\D^2)) = \int_{Z}\ii(\D^+(Z))
\]
and 
\[
\lim_{t\to\infty}\; \sbtr(\exp(-t\D^2)) = \ind \D^+(\zp).
\]
In the closed case, the analysis shows that the supertrace of $\exp(-t\D^2)$ is constant in $t$, and this fact proves the index theorem. In our case, however, an easy calculation shows that
\[
\frac d{dt}\sbtr(\exp(-t\D^2)) = - \btr\left[\D^-,\D^+\exp(-t\D^-\D^+)\right],
\]
where the term on the right need not vanish because of the failure of the regularized trace $\btr$ to be a true trace functional, that is, to vanish on commutators. Integrating the latter formula with respect to $t \in (0,\infty)$, we obtain an index theorem with the ``defect'' in the form
\[
\int_0^{\infty}\btr\left[\D^-,\D^+\exp(-t\D^-\D^+)\right]\,dt.
\]
Expressing this integral in terms of the periodic $\eta$-invariant \eqref{E:etaintro} completes the proof of Theorem \ref{T:aps}.

There is a fair amount of analytic results behind each of the steps in the proof. Some of these results, like the short-time estimates on the kernel of $\exp(-t\D^2)$, are well known and hold for all manifolds of bounded geometry, of which end-periodic manifolds are a special case. Other results, like the convergence $\sbtr(\exp(-t\D^2)) \to \ind \D^+(\zp)$ as $t\to \infty$, are more delicate and rely on the original gradient estimates which are specific to manifolds with periodic ends. All of this analysis is collected in Section \ref{S:estimates}.


\subsection{Calculations and applications}
Because the definition \eqref{E:etaintro} of the periodic $\eta$--invariant is so complex, it is not easy to make calculations beyond the product case. We present in Section \ref{S:inoue} a partial calculation for one family of examples, that of Dirac operators on certain Inoue surfaces~\cite{inoue}. These are important examples in the theory of non-K\"ahler complex surfaces; see for instance \cite{okonek-teleman:b+0} and \cite{teleman:definite}. Topologically they fiber over the circle with $3$-torus fiber, but the resulting infinite cyclic cover is not a metric product so $\eta(X)$ is not {\em a priori} related to the $\eta$--invariant of the fiber.

%

In Sections ~\ref{S:rho} and~\ref{S:psc} we present a sample application of our index theorem in differential geometry. First, in parallel to the theory of~\cite{aps:II}, we show how our periodic $\eta$--invariant give rise to a (metric-dependent) invariant $\tw_\alpha$ of an even-dimensional manifold $X$ equipped with a primitive cohomology class $\gamma \in H^1(X;\Z)$ and a unitary representation $\alpha$ of $\pi_1 (X)$. Recall that the $\tw$--invariants of~\cite[Section 3]{aps:II} are defined for odd-dimensional manifolds $Y$; our $\tw$--invariants are equal to these when $X = S^1 \times Y$. We apply our $\tw$--invariants in Section~\ref{S:psc} to detect components of the moduli space $\mmp(X)$ of Riemannian metrics of positive scalar curvature, modulo diffeomorphism.  Theorem~\ref{T:infinite} gives a somewhat more general version of the following result; cf. Botvinnik--Gilkey~\cite{botvinnik-gilkey:bordism}.

\begin{thm}\label{T:infinite-intro}
Let $Y$ be a closed connected spin manifold of dimension $4n-1$ with $n> 1$ and with a non-trivial finite fundamental group, and let $M$ be a closed spin manifold of dimension $4n$. If $Y$ and $M$ admit metrics of positive scalar curvature then $\pi_0(\mmp(S^1 \times Y)\conn M)$ is infinite.
\end{thm}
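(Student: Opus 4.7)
The plan is to detect path components of $\mmp(X)/\diff(X)$, for $X=(S^1\times Y)\conn M$, using the metric-dependent invariant $\tw_\alpha$ of Section~\ref{S:rho}, in the spirit of the Botvinnik--Gilkey~\cite{botvinnik-gilkey:bordism} argument for the odd-dimensional manifold $Y$. Note that $X$ is a closed spin manifold of even dimension $4n\geq 8$, and the hypothesis $n>1$ will enter through Gromov--Lawson surgery and through the Botvinnik--Gilkey input.

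First I would set up the invariant. By van Kampen (using $\dim X\geq 5$), $\pi_1(X)=(\Z\times\pi_1 Y)*\pi_1 M$, which admits a natural surjection onto the finite group $\pi_1 Y$. Pull back a unitary representation $\alpha$ of $\pi_1 Y$ to $\pi_1(X)$, and take $\gamma\in H^1(X;\Z)$ to be the pullback of the generator of $H^1(S^1)$. The associated infinite cyclic cover $\tilde X$ is $\R\times Y$ with a copy of $M$ attached by connected sum at each integer. Any PSC metric $g$ on $X$ lifts to a PSC metric on $\tilde X$, so by Lichnerowicz the $L^2$-kernel of the $\alpha$-twisted spin Dirac operator on $\tilde X$ vanishes; hence the Fredholmness hypothesis of Theorem~\ref{T:aps} is met and $\tw_\alpha(X,g)$ is well-defined. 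A standard variational argument using~\eqref{E:etaintro}, combined with the observation that no spectral flow of the family $\D_{\alpha,z}^+$ on $|z|=1$ can occur along a path of PSC metrics, shows that $\tw_\alpha$ is locally constant on $\mmp(X)$, and its naturality under isometries makes it descend to the quotient modulo the (finite) $\diff(X)$-orbit of $\alpha$.

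Next I would produce the infinite family of PSC classes. By Botvinnik--Gilkey, since $\pi_1 Y$ is finite nontrivial and $\dim Y=4n-1\geq 7$, we may choose $\alpha$ and PSC metrics $h_1,h_2,\dots$ on $Y$ whose relative APS $\rho$-invariants $\rho_\alpha(Y,h_k)$ take infinitely many distinct values. Form the product PSC metric $d\theta^2+h_k$ on $S^1\times Y$ and perform Gromov--Lawson surgery with $(M,g_M)$ in a small ball disjoint from a chosen cross-section $\{\theta_0\}\times Y$, producing PSC metrics $g_k$ on $X$ that agree with $d\theta^2+h_k$ on a collar of that cross-section and with $g_M$ on $M$ away from a small neck, with a $k$-independent interpolation in the neck region.

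The heart of the argument is the excision-type identity
\begin{equation*}
\tw_\alpha(X,g_k)-\tw_\alpha(X,g_l)=\rho_\alpha(Y,h_k)-\rho_\alpha(Y,h_l).
\end{equation*}
I would prove it by a cut-and-paste argument: choosing the fundamental domain $W$ for the infinite cyclic cover so that its two boundary copies of $Y$ lie in the pure-product region, the difference of $\tw_\alpha$'s localizes to that product collar, and the product-case formula of Section~\ref{S:product} identifies the result as a difference of APS $\eta_\alpha$-invariants. The main obstacle will be carefully controlling the heat-kernel-based periodic $\eta$-invariant under this decomposition; I expect this to follow from the regularized-trace machinery of Section~\ref{S:b-trace} together with standard heat-kernel gluing, but the end-periodic setting calls for some technical care. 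Once the displayed identity is in place, the finiteness of the $\diff(X)$-orbit of $\alpha$ together with the infinitely many distinct values $\rho_\alpha(Y,h_k)$ force $\tw_\alpha$ to take infinitely many distinct values on $\mmp(X)/\diff(X)$, completing the proof.
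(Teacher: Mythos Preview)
Your strategy is correct and closely parallels the paper's, but your execution of the two key technical steps differs from what the paper actually does, and the paper's route is considerably cleaner.

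For constancy of $\tw_\alpha$ along a PSC path, you invoke a ``standard variational argument'' plus absence of spectral flow. The paper instead proves this globally (Theorem~\ref{T:homotopy}) by building, on $\xtilde$ viewed as a manifold with two periodic ends, an end-periodic PSC metric equal to $g_0$ on one end and $g_1$ on the other, and then applying the end-periodic index theorem together with Lichnerowicz vanishing (Lemma~\ref{L:vanish}). This avoids having to establish smoothness of the periodic $\eta$-integral along the path and directly gives equality of the real numbers $\tw_\alpha(X,g_0)=\tw_\alpha(X,g_1)$, not merely equality mod $\Z$.

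For the reduction to $Y$, you propose a heat-kernel gluing/excision argument yielding the difference identity. The paper again uses the index theorem: Proposition~\ref{P:product-rho} applies Theorem~\ref{T:aps} to a manifold with one cylindrical end $(-\infty,0]\times Y$ and one periodic end modeled on $X$, choosing $df$ supported in the product collar so the transgression terms vanish. With PSC on both $X$ and $Y$, the indices vanish and one obtains the \emph{direct} equality $\tw_\alpha(X,\D^+,g_k)=\tw_\alpha(Y,\D,h_k)$, not just equality of differences. No heat-kernel gluing is needed.

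In short, where you reach for variational and parametrix-style arguments, the paper uniformly exploits its own index theorem on well-chosen end-periodic manifolds plus Lichnerowicz. One small imprecision: what is finite is not the $\diff(X)$-orbit of $\alpha$ but the index of the subgroup $H\subset\diff(X)$ preserving $\gamma$, the spin structure, and the conjugacy class of $\alpha$; this is what lets you pass from $\rrp(X)/H$ to $\mmp(X)$.
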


The topological techniques (surgery and handlebody theory) that go into the proof of this theorem are not completely available in low dimensions and so we are not able to obtain the result as stated if $n=1$.  By careful use of available techniques, in Theorem~\ref{T:4-dim}, we find $4$-manifolds of the form $X = (S^1 \times Y)\conn m\cdot (\sss)$ for which $\pi_0(\mmp(X))$ can be arbitrarily large.


\subsection{Organization}
The paper is organized as follows. We begin in Section \ref{S:periodic} by reviewing the basics of the theory of end-periodic operators and by deriving an explicit formula for the smoothing kernel of the operator $\exp(-t\D^2)$ on the periodic manifold $\tilde X$. Analytic estimates on the heat kernel, despite being crucial to the proof, are postponed until Section \ref{S:estimates} for the sake of exposition. The regularized trace $\btr$ is defined in Section \ref{S:b-trace}. In Section \ref{S:trace} we derive the commutator trace formula for $\btr$; this is followed by the proof of Theorem \ref{T:aps} in Section \ref{S:proof}. Section \ref{S:end-eta} discusses a spectral  interpretation of the invariant $\eta(X)$, as well as its interpretation in terms of the von Neumann trace. We also calculate the periodic $\eta$--invariant for product manifolds and make partial calculations for certain Inoue surfaces. Theorem \ref{T:aps} is extended to the non-Fredholm case in Section~\ref{S:non}. In the same section we discuss the dependence of the periodic $\eta$--invariants on orientations and its relation to spectral flow. The periodic $\tw$--invariants are introduced in Section \ref{S:rho} and are applied to the study of metrics of positive scalar curvature in Section ~\ref{S:psc}. Several analytic results used elsewhere in the paper are collected in the final Section \ref{S:estimates}.
 
\medskip\noindent
\textbf{Acknowledgments:} We thank Boris Botvinnik, Peter Gilkey, Lev Kapitanski, Leonid Parnovski, and Andrei Teleman for generously sharing their expertise. We are also thankful to Pierre Albin, Gilles Carron, and Rafe Mazzeo for illuminating discussions of this material and of their approach to the index theorem in this setting.


\section{End-periodic operators}\label{S:periodic}
We begin by describing manifolds with periodic ends, and the class of operators that we will consider.  We will restrict ourselves to the situation with one end; the extension to several ends is routine.  


\subsection{End-periodic manifolds and operators}\label{S:mflds}
Let $X$ be an oriented compact manifold endowed with a primitive cohomology class $\gamma \in H^1(X;\Z)$. This data gives rise to an infinite cyclic covering $p: \xtilde \to X$, together with a generator $T$ of the covering translations, which we will often denote by $T(x) = x+1$. Choose a smooth function $X \to S^1$ which pulls back the generator of $H^1(S^1;\Z)$ to $\gamma$.  We fix a lift $f: \xtilde \to \R$ of this function; it has the property that $f(x+1) = f(x) + 1$.  Note that while $f$ does not descend to a real-valued function on $X$, its differential does, and we will abuse notation by viewing $df$ as a $1$-form on $X$.

Choose an oriented, connected submanifold $Y \subset X$ that is Poincar\'e dual to $\gamma$, and cut $X$ open along $Y$ to obtain a cobordism $W$ with boundary $\p W = -Y\,\cup\,Y$. Note that 
\[
\xtilde\; =\; \bigcup_{k= -\infty}^\infty W_k,
\]
where $W_k$ are just copies of $W$. By definition, an \emph{end-periodic manifold} with end modeled on the infinite cyclic cover of $X$ is a manifold of the form 
\begin{equation}\label{E:zinfty}
\zp\; =\; Z\,\cup\, W_0\,\cup\, W_1\, \cup\, W_2\, \cup \cdots,
\end{equation}
where $Z$ is a smooth oriented compact manifold with boundary $Y$. There are obvious definitions of Riemannian metrics, bundles, and differential operators in this setting; in short, all data over the end should be pulled back from the same sort of data on $X$. 

We will largely follow the notation of~\cite{lawson-michelson} for index-theoretic notions. We consider Dirac operators $\D(M)$ defined on sections of a Dirac bundle $\Sm$ over a manifold $M$. When the dimension of $M$ is even, the Dirac bundle $\Sm = \Sm^+ \oplus \Sm^-$ is $\Z/2$--graded, and the Dirac operator $\D(M)$ decomposes into the chiral Dirac operators,
\[
\D(M)\; =\; 
\begin{pmatrix}
0& \D^-(M)\\
\D^+(M) &0
\end{pmatrix}
\]

\smallskip\noindent
with $\D^\pm(M): C^\infty(M,\Sm^\pm) \to C^\infty(M,\Sm^\mp)$. Note that part of the data of the Dirac bundle is a connection on $\Sm$ compatible with Clifford multiplication and with the grading.

There are many examples of end-periodic manifolds, and our analysis will apply to the natural geometric operators on these manifolds, such as the spin Dirac operator on a spin manifold and the signature operator, as well as to their twisted versions. We remark that an end may be topologically a product but not geometrically so; examples of this sort are warped products $\R \times Y$ with metric $d\theta^2 + \phi(\theta)\,g^Y$ (with periodic warping function $\phi(\theta)$). Manifolds with periodic ends that are not topologically products also abound; for instance, the manifold $X$ obtained by ($0$--framed for $n =3$) surgery on a knot in $S^n$ will have infinite cyclic cover $\tilde X$ that is not a product if the Alexander polynomial of the knot is not monic. A typical end-periodic manifold with end modeled on this $\tilde X$ may be obtained by cutting $\tilde X$ along a lift of a Seifert surface for the knot, and filling in this Seifert surface by an $n$-manifold that it bounds.  In a recent paper~\cite{mrowka-ruberman-saveliev:derham}, we analyzed the de Rham complex (with weights, as in Section~\ref{S:non}) on end-periodic manifolds.  When the end is modeled on the infinite cyclic cover of surgery on a knot, the Alexander module of the knot determines the behavior of the index as the weights are varied.


\subsection{Fredholm theory for end-periodic operators}\label{S:fred}
We briefly review relevant parts of the Fredholm theory of end-periodic operators following Taubes~\cite{taubes:periodic}, starting with the definition of the weighted Sobolev spaces.

Given $\delta \in \R$ and a non-negative integer $k$, we will say that $u \in L^2_{k,\delta}\, (\zp,\Sm)$ if and only if $e^{\delta f} u \in L^2_k\,(\zp,\Sm)$, where $f: \zp \to \mathbb R$ is an extension of $f$ to $\zp$. We define
\[ 
\| u \|_{L^2_{k,\delta}\,(\zp,\Sm)} = \|\,e^{\delta f}\,u \|_{L^2_k\,(\zp,\Sm)}.  
\] 
Assume for the sake of concreteness that $\zp$ is even dimensional. Then as usual, the operator $\D^+(\zp)$ extends to a bounded operator 
\begin{equation}\label{E:sobolev}
\D^+(\zp): L^2_{k+1,\delta}\, (\zp,\Sm^+) \to L^2_{k,\delta}\, (\zp,\Sm^-),
\end{equation}
and similarly for $\D^-(\zp)$. An excision principle shows that the operator \eqref{E:sobolev} is Fredholm if and only if the operator $\D^+(\xtilde): L^2_{k+1,\delta}\, (\xtilde,\Sm^+) \to L^2_{k,\delta}\, (\xtilde,\Sm^-)$ is Fredholm (or equivalently, invertible). Taubes gives a Fredholmness criterion using the Fourier-Laplace transform as follows.

Given a spinor $u \in C^{\infty}_0 (\tilde X;\Sm)$ and a complex number $z \in \C^*$, the \emph{Fourier--Laplace transform} of $u$ is defined as 
\[ 
\hat u_z(x) = z^{\,f(x)}\,\sum_{n = -\infty}^{\infty}  z^{\,n}\, u(x+n),
\] 
for a fixed branch of $\ln z$. Since $u$ has compact support, the above sum is finite. One can easily check that $\hat u_z(x + 1) =  \hat u_z (x)$ for all $x \in \tilde X$. Therefore, for every $z \in \C^*$, we have a well defined spinor $\hat u_z$ over $X$ that depends analytically on $z$. The spinor $u$ can be recovered from its Fourier--Laplace transform using the formula 
\smallskip
\begin{equation}\label{E:zinv}
u(x)\; =\; \frac 1 {2\pi i}\,\oint_{|z| = 1}\; z^{-f(x)}\;\hat u_z(x_0)\,
\frac {dz}z, 
\end{equation} 
where $p:\tilde X \to X$ is the covering projection,  $x_0 = p(x) \in X$, and the contour integral is taken counterclockwise. This can be checked by direct substitution. 

The Fourier-Laplace transform extends to the weighted Sobolev spaces defined above. Conjugating the operators $\D^{\pm}(\xtilde)$ by the Fourier--Laplace transform, we obtain holomorphic families of twisted Dirac operators on $X$,
\begin{equation}\label{E:holo}
\D^{\pm}_z (X)\; =\; \D^{\pm}(X)\; -\; \ln z\cdot df,\quad z \in \C^*.
\end{equation}

\begin{proposition}[{\cite[Lemma 4.3]{taubes:periodic}}]\label{P:taubes}
The operator \eqref{E:sobolev} is Fredholm if and only if the operators 
$\D^+_z(X)$ are invertible for all $z$ on the circle $|z| = e^\delta$.
\end{proposition}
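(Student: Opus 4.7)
The plan is to diagonalize $\D^+(\tilde X)$ via the Fourier--Laplace transform of Section~\ref{S:fred}, reducing Fredholmness on the cyclic cover to pointwise invertibility of the holomorphic family $\D^+_z(X)$ on the circle $|z|=e^\delta$. The excision principle already quoted in the text allows us to work exclusively with $\D^+(\tilde X)\colon L^2_{k+1,\delta}(\tilde X,\Sm^+)\to L^2_{k,\delta}(\tilde X,\Sm^-)$.

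First I would establish a Plancherel-type identity. Setting $v = e^{\delta f} u$ and $z = e^\delta w$, direct substitution into~\eqref{E:zinv} gives $\hat u_z(x) = \hat v_w(x)$, reducing the weighted case to the unweighted one. Combined with the Fourier-series Plancherel on the fundamental segment $W$, this yields
\[
\|u\|^2_{L^2_{k,\delta}(\tilde X,\Sm)}\;=\;\frac{1}{2\pi i}\oint_{|z|=e^\delta}\|\hat u_z\|^2_{L^2_k(X,\Sm)}\,\frac{dz}{z},
\]
so that $u\mapsto \hat u_{(\cdot)}$ is a Hilbert-space isomorphism from $L^2_{k,\delta}(\tilde X,\Sm)$ onto the $L^2$-sections of $L^2_k(X,\Sm)$ over the circle $|z|=e^\delta$. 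Applying the Leibniz rule to $\D^+(z^{f(x)}v(x))$ using $\nabla z^{f(x)}=(\ln z)\,z^{f(x)}df$ then yields the intertwining relation
\[
\widehat{\D^+(\tilde X)u}_z\;=\;\D^+_z(X)\,\hat u_z,
\]
so under this isomorphism $\D^+(\tilde X)$ becomes pointwise multiplication by the holomorphic family $\{\D^+_z(X)\}$.

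For sufficiency, assume each $\D^+_z(X)$ is invertible on $|z|=e^\delta$. Each member is elliptic on the closed manifold $X$ and depends holomorphically on $z\in\C^*$, so by the analytic Fredholm theorem the inverses $\{\D^+_z(X)^{-1}\}$ form a holomorphic family on an annular neighborhood of the circle; compactness then gives a uniform operator-norm bound. Pointwise multiplication by this uniformly bounded inverse family, transferred back through the Plancherel isomorphism, furnishes a bounded two-sided inverse of $\D^+(\tilde X)$, which is in particular Fredholm. For the converse, suppose $\D^+_{z_0}(X)$ fails to be invertible at some $z_0$ on the circle; since its index is zero, pick a nonzero $\phi_0\in\ker\D^+_{z_0}(X)$ and any smooth extension $z\mapsto\phi_z$ with $\phi_{z_0}=\phi_0$, so that $\D^+_z(X)\phi_z=O(|z-z_0|)$ near $z_0$. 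Choose cutoff functions $\chi_n$ supported on mutually disjoint arcs of the circle of shrinking length centered at points approaching $z_0$, and let $\psi_n$ be the inverse Fourier--Laplace transform of $\chi_n(z)\phi_z$, rescaled so that $\|\psi_n\|_{L^2_{k+1,\delta}}=1$. Plancherel combined with the disjoint supports makes the $\psi_n$ orthonormal, while the $O(|z-z_0|)$ bound and the shrinking of $\supp\chi_n$ force $\|\D^+(\tilde X)\psi_n\|_{L^2_{k,\delta}}\to0$; an infinite orthonormal sequence on which the operator decays to zero in norm is incompatible with Fredholmness, yielding the contradiction.

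The main technical obstacle I anticipate is the bookkeeping in the Plancherel identity---reconciling the weight $e^{\delta f}$ on $\tilde X$ with the radius $e^\delta$ of the contour---and in the Weyl-sequence argument, balancing the $L^2$-norm scaling of $\chi_n$ against the $O(|z-z_0|)$ decay of $\D^+_z(X)\phi_z$ as the supports shrink and accumulate at $z_0$. Once those normalizations are correctly handled, both directions reduce to standard analytic Fredholm theory together with compactness of the circle.
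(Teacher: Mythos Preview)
The paper does not prove this proposition; it simply cites Taubes~\cite[Lemma~4.3]{taubes:periodic} and moves on. Your sketch is therefore not a comparison target but a stand-alone argument, and it is essentially the standard one that Taubes gives: diagonalize via the Fourier--Laplace transform, so that $\D^+(\tilde X)$ becomes the direct integral of the family $\{\D^+_z(X)\}$ over the circle $|z|=e^\delta$, and then invoke uniform invertibility in one direction and a Weyl sequence in the other.

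There is one genuine gap in your necessity argument. You write ``since its index is zero, pick a nonzero $\phi_0\in\ker\D^+_{z_0}(X)$,'' but at this point nothing guarantees that $\ind\D^+_z(X)=0$; indeed, Corollary~\ref{C:fred} deduces this \emph{from} the Fredholmness you are trying to establish. What you do know is that $\D^+_{z_0}(X)$ is an elliptic operator on a closed manifold that fails to be invertible, so either its kernel or its cokernel is nontrivial. If $\ker\D^+_{z_0}(X)\ne 0$ your Weyl-sequence construction goes through as written. If instead only $\coker\D^+_{z_0}(X)\ne 0$, you must run the identical argument for the formal adjoint family (which corresponds under Fourier--Laplace to the adjoint of $\D^+(\tilde X)$ on the dual weighted spaces) to produce a singular Weyl sequence for the adjoint, showing the range of $\D^+(\tilde X)$ is not closed or has infinite-codimensional image. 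Either way the operator fails to be Fredholm. Once you insert this case distinction, the argument is complete; the normalization bookkeeping you flag is routine and works exactly as you describe.
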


\begin{corollary}\label{C:fred}
A necessary condition for the operator \eqref{E:sobolev} to be Fredholm is that $\ind \D^+(X) = 0$. 
\end{corollary}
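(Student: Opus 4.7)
The plan is to exploit homotopy invariance of the index under zeroth-order perturbations. The twisted operators $\D^+_z(X) = \D^+(X) - \ln z \cdot df$ defined in \eqref{E:holo} differ from $\D^+(X)$ only by the zeroth-order term $-\ln z \cdot df$, which is multiplication by a smooth bundle endomorphism. Hence all the operators in the family have the same principal symbol as $\D^+(X)$, and on the closed compact manifold $X$ this forces
\[
\ind \D^+_z(X) \;=\; \ind \D^+(X) \quad \text{for every } z \in \C^*,
\]
either by invoking the Atiyah--Singer theorem directly or, more elementarily, by noting that $\{\D^+(X) - t\cdot\ln z\cdot df : t \in [0,1]\}$ is a norm-continuous path of elliptic operators between $\D^+(X)$ and $\D^+_z(X)$, along which the Fredholm index is constant.

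Next I would apply Proposition~\ref{P:taubes}: the Fredholmness of the operator in \eqref{E:sobolev} implies that $\D^+_z(X)$ is invertible for every $z$ on the circle $|z| = e^\delta$. Pick any such $z_0$; then $\ker \D^+_{z_0}(X) = 0$ and $\coker \D^+_{z_0}(X) = 0$, so in particular $\ind \D^+_{z_0}(X) = 0$. Combining with the previous display gives $\ind \D^+(X) = 0$, which is the desired conclusion.

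There is no real obstacle here: the argument is a one-line consequence of Taubes' criterion combined with the standard homotopy invariance of the index for a compact-operator perturbation of an elliptic operator on a closed manifold. The only point worth being careful about is that $z = 1$ need not lie on the chosen circle $|z| = e^\delta$, so the conclusion $\ind\D^+(X) = 0$ is not obtained by simply ``setting $z = 1$'' in the Fredholm condition; one really does need the observation that the index is independent of $z$.
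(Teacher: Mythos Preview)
Your proof is correct and is exactly the argument the paper has in mind; the corollary is stated without proof precisely because it follows immediately from Proposition~\ref{P:taubes} together with the observation that the zeroth-order perturbation $-\ln z\cdot df$ does not change the index on the closed manifold $X$. Your remark that one must invoke index invariance rather than simply set $z=1$ is well taken for general $\delta$, though note that in the special case $\delta=0$ the point $z=1$ does lie on the circle and $\D^+_1(X)=\D^+(X)$, so no homotopy is needed there.
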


Given that $\ind \D^+ (X) = 0$, the set of points $z \in \C^*$ where the operators $\D^+_z(X)$ or, equivalently, $\D^-_z(X)$, are not invertible will be referred to as the \emph{spectral set} of the family $\D^+_z(X)$. The following result is due to Taubes \cite[Theorem 3.1]{taubes:periodic}.

\begin{theorem}\label{T:fred1}
Suppose that $\ind \D^+ (X) = 0$ and the map $df: \ker \D^+ (X) \to \coker \D^+ (X)$ given by Clifford multiplication by $df$ is injective. Then the spectral set of the family $\D^+_z (X)$ is a discrete subset of $\C^*$. In particular, the operator \eqref{E:sobolev} is Fredholm for all but a discrete set of $\delta \in \R$.
\end{theorem}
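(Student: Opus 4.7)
My plan is to combine the analytic Fredholm theorem with a Lyapunov--Schmidt reduction at $w = 0$ that uses the hypothesis on $df$. Parametrize the family by $w = \ln z$, so that $\D^+_w = \D^+(X) - w\cdot df$ is a genuinely holomorphic family of Fredholm operators on the closed manifold $X$, each of index $\ind \D^+(X) = 0$. The multivaluedness of $\ln z$ is harmless because $\D^+_w$ and $\D^+_{w+2\pi i}$ are conjugate via the unitary multiplier $e^{-2\pi i f}$ on $X$, so the non-invertibility set descends along $z = e^w$ to a well-defined subset of $\C^*$. By the analytic Fredholm theorem, the set $\Sigma \subset \C$ on which $\D^+_w$ fails to be invertible is either all of $\C$ or discrete, so everything reduces to producing a single $w$ at which $\D^+_w$ is invertible.

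To produce such a $w$, I would carry out a Lyapunov--Schmidt reduction at $w = 0$. Set $K = \ker \D^+(X)$ and $C = \coker \D^+(X) \cong \ker \D^-(X)$; since $\ind \D^+(X) = 0$, one has $\dim K = \dim C$. Relative to the orthogonal splittings $L^2(X;\Sm^+) = K \oplus K^\perp$ and $L^2(X;\Sm^-) = C \oplus C^\perp$, write
\[
\D^+_w \;=\; \begin{pmatrix} a(w) & b(w) \\ c(w) & d(w) \end{pmatrix}.
\]
At $w = 0$ the blocks $a, b, c$ vanish while $d(0)\colon L^2_1 \cap K^\perp \to C^\perp$ is an isomorphism, so by holomorphic perturbation $d(w)$ remains invertible for $|w|$ small. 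A Schur-complement argument then reduces invertibility of $\D^+_w$ to that of the finite-dimensional map $a(w) - b(w)d(w)^{-1}c(w)$. Since $b(w)$ and $c(w)$ vanish to first order in $w$ and $a(w) = -w\cdot T + O(w^2)$, where $T\colon K \to C$ is the map induced by Clifford multiplication by $df$, this Schur complement equals $-wT + O(w^2)$. The hypothesis asserts that $T$ is injective, and combined with $\dim K = \dim C$ this forces $T$ to be an isomorphism, so $\D^+_w$ is invertible for all sufficiently small $w \neq 0$.

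This shows that $\Sigma$ is discrete in $\C$, and its image under $z = e^w$ is the spectral set, a discrete subset of $\C^*$. For the last statement, Proposition \ref{P:taubes} says that \eqref{E:sobolev} fails to be Fredholm at weight $\delta$ exactly when the circle $|z| = e^\delta$ meets the spectral set. Because the spectral set is discrete, any compact annulus $\{R_1 \leq |z| \leq R_2\}$ contains only finitely many of its points, so the set of bad $\delta = \ln|z|$ is locally finite, hence discrete, in $\R$.

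The main obstacle is the careful setup of the Schur-complement reduction in the Sobolev framework: one needs that $a(w), b(w), c(w), d(w)$ are all holomorphic in $w$, that $d(w)^{-1}$ depends holomorphically on $w$ as a map between the appropriate complementary Sobolev spaces, and that the leading order of $a(w)$ is indeed the map induced by $df$. All of this is standard Kato--Rellich perturbation theory for a zeroth-order perturbation of an elliptic operator on a closed manifold, and is routine once the pieces are arranged.
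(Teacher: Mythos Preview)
The paper does not give its own proof of this theorem; it attributes the result to Taubes~\cite[Theorem~3.1]{taubes:periodic} and moves on. Your argument is correct and is the standard route to such statements: write the family as a holomorphic (indeed affine) family of index-zero Fredholm operators in the parameter $w=\ln z$, invoke the analytic Fredholm alternative, and then use a Lyapunov--Schmidt/Schur-complement reduction at $w=0$ to exhibit a nearby invertible member. The key point---that the reduced map $T:K\to C$ is an isomorphism because it is injective by hypothesis and $\dim K=\dim C$ by the index-zero assumption---is exactly right, as is the descent from the $w$-plane to $\C^*$ via the $2\pi i$-periodicity induced by conjugation with $e^{\pm 2\pi i f}$. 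Your final step, bounding the bad weights $\delta=\ln|z_k|$ by intersecting the spectral set with compact annuli, is also correct.

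One minor sharpening: since the perturbation $-w\,df$ is linear in $w$, the blocks $a(w),b(w),c(w)$ are exactly linear and $d(w)$ exactly affine, so $a(w)=-wT$ on the nose; the $O(w^2)$ correction lives entirely in the term $b(w)d(w)^{-1}c(w)$. This only simplifies your computation. If you want to phrase the analytic Fredholm step in its most classical form (for $I-K(w)$ with $K(w)$ compact), note that $df:L^2_1\to L^2$ is compact on the closed manifold $X$, so once you have a single invertible $\D^+_{w_0}$ you may write $\D^+_w=\D^+_{w_0}\bigl(I-(w-w_0)(\D^+_{w_0})^{-1}df\bigr)$ and apply the theorem directly.
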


\begin{remark}
In the paper \cite{ruberman-saveliev:periodic} the second and third authors investigated the special case of the spin Dirac operator on end-periodic manifolds of dimension at least 4 and gave a condition that guarantees that the spectral set of the family $\D^+_z (X)$ is both discrete and avoids the unit circle $|z| = 1$ for a generic metric on $X$. If the dimension of $X$ is divisible by four, that condition is simply the necessary condition of Corollary \ref{C:fred}.
\end{remark}


\subsection{Smoothing kernels}
This section introduces smoothing kernels of operators of the form $h(\D)$, where $h$ is a rapidly decaying functions. To avoid complicating the exposition, we have delayed a detailed discussion of relevant analytic matters until Section \ref{S:estimates}. 

Let $\D$ be a Dirac operator on a manifold $M$, which in all of our applications will be either closed or end-periodic. For any rapidly decaying function $h: \R \to \R$, the operator $h(\D)$ defined using the spectral theorem can be written as 
\begin{equation}\label{E:h(D)}
h(\D)(u) (x) = \int_M\, K(x,y)\,u(y)\;dy,
\end{equation}

\smallskip\noindent
where $K(x,y)$ is a smooth section of the bundle $\Hom\,(\pi_R^* \Sm,\pi_L^* \Sm)$, and $\pi_L$ and $\pi_R$ are projections onto the two factors of $M \times M$. We refer to $K(x,y)$ as the \emph{smoothing kernel}. For example, the operators $\exp(-t \D^2)$ and $\D\,\exp(-t \D^2)$ are represented by such smoothing kernels for all $t > 0$. When $M$ is even dimensional, we can restrict our operators to the sections of $\Sm^{\pm}$ to obtain their chiral versions like $\exp(-t \D^-\D^+)$ or $\D^+ \exp(-t \D^-\D^+)$, which again are represented by smoothing kernels.

In the original proof of the Atiyah--Patodi--Singer index theorem~\cite{aps:I}, a crucial role was played by an explicit formula for the smoothing kernel of the operator $\exp(-t \D^2)$ on a cylinder obtained from the classical solution of the heat equation \cite[Section 2]{aps:I}. Such an explicit formula is not available in our more general setting, however, we present below an equally useful formula for the smoothing kernel of the operator $h(\D)$ on the infinite cyclic cover $\tilde X$ in terms of data on $X$.

Let $\D_z = \D (X) - \ln z\cdot df$, $z \in \mathbb C^*$. The smoothing kernels of operators $h(\D)$ on $\tilde X$ and of $h(\D_z) = h(\D)_z$ on $X$ will be called $\tilde K(x,y)$ and $K_z(x,y)$, respectively. The following proposition, which expresses $\tilde K$ in terms of $K_z$, can be verified by a direct calculation with the Fourier--Laplace transform.

\begin{proposition}
Let $p:\tilde X \to X$ be the covering projection then, for any $x, y \in \tilde X$ and $x_0 = p(x)$, $y_0 = p(y)$, we have
\begin{equation}\label{E:kkz}
\tilde K(x,y)\; =\; \frac 1 {2\pi i}\,\oint_{|z| = 1} z^{f(y)-f(x)} 
K_z (x_0,y_0)\;\frac{dz}z.
\end{equation}
\end{proposition}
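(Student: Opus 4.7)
The plan is to verify \eqref{E:kkz} by testing both sides against an arbitrary $u \in C^{\infty}_0(\tilde X;\Sm)$ and showing that they produce the same value of $(h(\D)u)(x)$. Since the smoothing kernel $\tilde K$ is uniquely determined by its action via \eqref{E:h(D)}, matching the two expressions forces the stated identity. The key input is the intertwining property of the Fourier--Laplace transform: it sends $\D$ on $\tilde X$ to $\D_z$ on $X$, so by the functional calculus it sends $h(\D)$ to $h(\D_z)$, i.e., $\widehat{h(\D)u}_z = h(\D_z)\hat u_z$ for each $z$ on the unit circle.

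Given this, the first step is to use the smoothing kernel of $h(\D_z)$ to write $(h(\D_z)\hat u_z)(x_0) = \int_X K_z(x_0,y_0)\,\hat u_z(y_0)\,dy_0$, and then apply the inversion formula \eqref{E:zinv} to recover
\[
(h(\D)u)(x) \;=\; \frac{1}{2\pi i}\oint_{|z|=1} z^{-f(x)}\int_X K_z(x_0,y_0)\,\hat u_z(y_0)\,dy_0\,\frac{dz}{z}.
\]
The second step is to substitute the defining series $\hat u_z(y_0) = z^{f(\tilde y)}\sum_n z^n u(\tilde y+n)$, where $\tilde y$ is the lift of $y_0$ lying in the fundamental segment $W_0$. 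Because $f(x+1) = f(x)+1$, the factor $z^{f(\tilde y)+n}$ becomes $z^{f(y)}$ with $y = \tilde y+n$, and the sum over $n$ of integrals over $W_0$ reassembles into a single integral over $\tilde X = \bigcup_n W_n$. Exchanging the order of the contour and $\tilde X$ integrations then yields exactly $\int_{\tilde X} \bigl[\tfrac{1}{2\pi i}\oint_{|z|=1} z^{f(y)-f(x)} K_z(x_0,y_0)\,\tfrac{dz}{z}\bigr]\, u(y)\,dy$, which by the uniqueness of smoothing kernels gives \eqref{E:kkz}.

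The main obstacle is analytic rather than algebraic: one must justify the Fubini-type interchanges between the contour, the sum over $n$, and the $X$-integral. Since $u$ has compact support, the series defining $\hat u_z$ terminates for each $y_0$, reducing the issue to an exchange between the contour integral and a single integral over $\tilde X$. This rests on holomorphy of $z\mapsto K_z(x_0,y_0)$ in a neighborhood of the unit circle together with smoothness and uniform bounds in $(x_0,y_0)$, all of which follow from the short-time heat kernel estimates and their uniform-in-$z$ analogues for $h(\D_z)$ collected in Section~\ref{S:estimates}. A secondary point worth checking is that the functional calculus for $\D$ is genuinely compatible with the Fourier--Laplace transform; this is clear from the self-adjointness of $\D$ and $\D_z$ together with the fact that $u\mapsto\hat u_z$ extends to a unitary identification of $L^2(\tilde X;\Sm)$ with an appropriate space of $z$-dependent $L^2$-sections on $X$.
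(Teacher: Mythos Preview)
Your proposal is correct and is exactly the ``direct calculation with the Fourier--Laplace transform'' that the paper alludes to but does not spell out. You have unfolded the inversion formula, the intertwining relation $\widehat{h(\D)u}_z = h(\D_z)\hat u_z$, and the reassembly of the $n$-sum into an integral over $\tilde X$ in the intended way; the only minor overreach is invoking Section~\ref{S:estimates} for the Fubini step, since compact support of $u$ makes the sum finite and the interchange of the contour and $\tilde X$ integrals then needs only continuity of $K_z(x_0,y_0)$ in $(z,x_0,y_0)$ on the compact set $\{|z|=1\}\times X\times X$.
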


\begin{remark}
Note that formula \eqref{E:kkz} implies that the smoothing kernel $\tilde K(x,y)$ is periodic in that 
\begin{equation}\label{E:per}
\tilde K(x+k,y+k) = \tilde K(x,y)\quad\text{for any $k \in \Z$}.
\end{equation}
In addition, if $\w K_z (x_0,y)$ is the Fourier--Laplace transform of $\tilde K(x,y)$ with respect to the variable $x$ then
\begin{equation}\label{E:wkz}
\w K_z (x_0,y) = z^{f(y)}\, K_z (x_0,y_0).
\end{equation}
\end{remark}

\smallskip


\section{Regularized trace}\label{S:b-trace}
A smoothing operator \eqref{E:h(D)} need not be trace class on a non-compact manifold because the integral of $\tr (K(x,x))$, which is used to define the operator trace, may diverge. Such an integral can be regularized in many different ways; the regularization we choose is inspired by that of Melrose \cite{melrose:aps}. It applies to end-periodic manifolds like $\zp$ and the operators $\D^{\,m} \exp(-t \D^2)$, $m \ge 0$, and their chiral versions such as $ \D^- \D^+\exp(-t \D^-\D^+)$. 

Extending this construction to a larger class of operators, while it may be an interesting problem in its own right, is certainly beyond the scope of this paper.


\subsection{Definition of the regularized trace}
Let us fix an integer $m \ge 0$ and consider the operator $\D^{\,m} \exp(-t\D^2)$. This operator will be called $P$ or $\tilde P$, depending on whether $\D$ is the Dirac operator on $\zp$ or $\tilde X$. The smoothing kernels of $P$ and $\tilde P$ will be denoted by $K(t;x,y)$ and $\tilde K(t;x,y)$, respectively. 

Let $Z_N = Z\,\cup\,W_0\,\cup \ldots \cup\,W_N$ for any integer $N \ge 0$. We define the \emph{regularized trace} of $P$ by the formula
\begin{multline}\label{E:btr}
\btr (P) = \\ \lim_{N \to \infty} \left[ \int_{Z_N} \tr\,(K(t;x,x))\,dx - 
(N + 1) \int_{W_0} \tr\,(\tilde K(t;x,x))\,dx \right].
\end{multline}

\medskip

\begin{lemma}
For any $t > 0$, the limit \eqref{E:btr} exists.
\end{lemma}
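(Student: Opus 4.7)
\emph{Plan.} The idea is to rewrite the bracketed expression as a telescoping series and reduce existence of the limit to a comparison estimate for the heat kernels on $\zp$ and on $\tilde X$.

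First, since the Dirac operator on $\tilde X$ is pulled back from $X$, it commutes with the deck transformation $T$, so the kernel $\tilde K(t;x,y)$ of $\D^m\exp(-t\D^2)$ on $\tilde X$ satisfies the periodicity $\tilde K(t;x+1,y+1)=\tilde K(t;x,y)$ exactly as in \eqref{E:per}. In particular $\int_{W_k}\tr\tilde K(t;x,x)\,dx$ is independent of $k$, and the bracketed expression in \eqref{E:btr} equals
\[
\int_Z\tr K(t;x,x)\,dx \;+\; \sum_{k=0}^{N}\int_{W_k}\tr\bigl[K(t;x,x)-\tilde K(t;x,x)\bigr]\,dx.
\]
Existence of the limit as $N\to\infty$ is therefore equivalent to summability in $k$ of the integrals on the right.

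Second, under the isometric identification of $W_0\cup W_1\cup\cdots$ with one half of $\tilde X$, the two Dirac operators producing $K$ and $\tilde K$ coincide on the end of $\zp$. This is precisely the situation handled by the standard locality (Duhamel) principle: one glues a parametrix for the heat equation on $\zp$ out of the two heat kernels via a cutoff function supported well inside the end, and applies Duhamel's formula. In combination with the Gaussian off-diagonal bounds on the heat kernel that hold in the bounded-geometry setting, this yields a pointwise estimate of the form
\[
\bigl|\tr\bigl(K(t;x,x)-\tilde K(t;x,x)\bigr)\bigr|\;\le\;C(t)\,e^{-c\,d(x,Z)^2/t}
\]
for all $x$ on the end, where the constants depend on $t$ and $m$ but not on $x$. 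Since $d(x,Z)$ grows at least linearly in $k$ for $x\in W_k$, integrating over the compact region $W_k$ (which has volume equal to that of $W$) gives $|\int_{W_k}\tr(K-\tilde K)(t;x,x)\,dx|\le C'(t)\,e^{-c'k^2/t}$, and summability in $k$ is immediate.

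The main obstacle is establishing the Gaussian comparison estimate above for $\D^m\exp(-t\D^2)$ uniformly as $x$ ranges into the end. This is a routine but technical consequence of the parametrix construction together with Gaussian off-diagonal heat-kernel bounds on manifolds of bounded geometry, of which end-periodic manifolds are an instance; these analytic ingredients are precisely what is collected in Section~\ref{S:estimates} and will be invoked here.
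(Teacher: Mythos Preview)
Your proof is correct and follows essentially the same approach as the paper: rewrite the bracketed quantity using periodicity of $\tilde K$ as $\int_Z\tr K + \sum_{k=0}^N\int_{W_k}\tr(K-\tilde K)$, then invoke the Duhamel/Gaussian comparison estimate (Corollary~\ref{C:diff} and Remark~\ref{R:diff} in Section~\ref{S:estimates}) to obtain $\bigl|\int_{W_k}\tr(K-\tilde K)\bigr|\le C\,e^{-c(k-1)^2/t}$ and hence absolute convergence of the series. The only cosmetic difference is that the paper shifts all integrals back to $W_0$ via the covering translation before summing, whereas you keep them over $W_k$.
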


\begin{proof}
Write
\begin{multline}\notag
\int_{Z_N} \tr\,(K(t;x,x))\,dx = \int_Z \tr\,(K(t;x,x))\,dx\, +
\sum_{k = 0}^N\; \int_{W_k} \tr\,(K(t;x,x))\,dx = \\ 
\int_Z \tr\,(K(t;x,x))\,dx + \sum_{k = 0}^N\; \int_{W_0} \tr\,(K(t;x+k,x+k) 
- \tilde K(t;x+k,x+k))\,dx \\ + (N+1) \int_{W_0} \tr\,(\tilde K(t;x,x))\,dx,
\end{multline}
where we used \eqref{E:per} in the last line. Use Corollary \ref{C:diff} and Remark \ref{R:diff} to estimate
\begin{equation}\label{E:btr1}
\int_{W_0} |\tr\,(K(t;x+k,x+k) - \tilde K(t;x+k,x+k))|\,dx\;\le\;
C_1\,e^{-C_2\,(k-1)^2/t}
\end{equation}

\medskip\noindent
for all $k \ge 1$. Therefore, the series 
\begin{equation}\label{E:btr2}
\sum_{k = 0}^{\infty}\;\int_{W_0} \tr\,(K(t;x+k,x+k)-\tilde K(t;x+k,x+k))\,dx
\end{equation}
converges absolutely for any $t > 0$. This completes the proof. 
\end{proof}

\begin{lemma}\label{L:btr}
The regularized trace $\btr(P)$ is a continuous function of $t\in (0,\infty)$.
\end{lemma}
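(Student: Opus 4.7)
The plan is to exploit the decomposition of $\btr(P)$ that was implicit in the proof of the preceding lemma, namely
\[
\btr(P) \;=\; \int_Z \tr K(t;x,x)\,dx \;+\; \sum_{k=0}^{\infty}\int_{W_0}\tr\bigl(K(t;x+k,x+k)-\tilde K(t;x+k,x+k)\bigr)\,dx,
\]
and show that each of the two summands depends continuously on $t$. First I would handle the compact piece $\int_Z \tr K(t;x,x)\,dx$: since $\zp$ has bounded geometry, the heat kernel $K(t;x,y)$ (and more generally the kernel of $\D^m\exp(-t\D^2)$) is a smooth function of $(t,x,y)\in (0,\infty)\times\zp\times\zp$, so its restriction to the diagonal over the compact set $Z$ produces an integrand that is jointly continuous in $(t,x)$, whence the integral is continuous in $t$.

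Next I would show that the series converges uniformly on every compact subinterval $[t_1,t_2]\subset(0,\infty)$. The key point is that the Gaussian off-diagonal bound \eqref{E:btr1} used in the previous lemma is really of the form $C_1(t)\,e^{-C_2(t)(k-1)^2/t}$, where the constants $C_1,C_2$ come from the short-time and finite-propagation estimates for the heat kernel on manifolds of bounded geometry collected in Section \ref{S:estimates}. These estimates, together with the cited Corollary \ref{C:diff} and Remark \ref{R:diff}, are uniform for $t$ in any compact subinterval of $(0,\infty)$, so one obtains a bound of the form
\[
\int_{W_0}\bigl|\tr\bigl(K(t;x+k,x+k)-\tilde K(t;x+k,x+k)\bigr)\bigr|\,dx \;\le\; C_1\,e^{-C_2(k-1)^2},\qquad t\in[t_1,t_2],
\]
for constants $C_1,C_2>0$ depending only on $t_1,t_2$. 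This is a convergent geometric-type majorant independent of $t$, so the Weierstrass $M$-test gives uniform convergence of the series on $[t_1,t_2]$.

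Finally, each individual term $\int_{W_0}\tr(K(t;x+k,x+k)-\tilde K(t;x+k,x+k))\,dx$ is continuous in $t$, again by joint smoothness in $(t,x)$ of both $K$ and $\tilde K$ (the latter by the explicit formula \eqref{E:kkz} of the previous section, which represents $\tilde K$ as a contour integral of the smooth-in-$t$ kernels $K_z$). A uniformly convergent series of continuous functions is continuous, so the sum defines a continuous function of $t\in(0,\infty)$, and combining this with the continuity of the compact piece yields the claim.

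The main obstacle I foresee is verifying that the exponential estimate \eqref{E:btr1} can be taken with constants uniform on compact $t$-intervals; this is a purely analytic issue about the heat kernel on periodic ends, and it is exactly the type of estimate whose proof the authors have relegated to Section \ref{S:estimates}. Everything else is standard dominated/uniform convergence reasoning.
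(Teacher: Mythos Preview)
Your proposal is correct and follows essentially the same approach as the paper: the paper's proof simply observes that $K(t;x,x)$ and $\tilde K(t;x,x)$ are continuous in $t$ (citing Roe) and that the series \eqref{E:btr2} converges uniformly on bounded intervals, exactly as you argue. Your write-up is more detailed---separating off the compact piece over $Z$, invoking the Weierstrass $M$-test explicitly, and noting that the constants in Corollary~\ref{C:diff} are uniform for $t\in(0,T]$---but the underlying argument is identical.
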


\begin{proof}
Since both $K(t;x,x)$ and $\tilde K(t;x,x)$ are continuous functions of $t \in (0,\infty)$; see Section \ref{S:smooth} and \cite[Proposition 2.10]{roe1}, the result follows from uniform convergence of series \eqref{E:btr2} on bounded intervals.
\end{proof}


\subsection{A formula for the regularized trace}
The formula \eqref{E:kkz} can be used to express the correction term in formula \eqref{E:btr} for the regularized trace in terms of the family $\D_z = \D_z(X)$ of twisted Dirac operators on the closed manifold $X$. Restricting \eqref{E:kkz} to the diagonal $x = y$ and applying the matrix trace, we obtain 
\[
\tr\,(\tilde K(t;x,x)) = \frac 1 {2\pi i}\,\oint_{|z| = 1} 
\tr\,(K_z (t;x_0,x_0))\;\frac{dz}z,
\]
where $x_0 = p(x)$, $y_0 = p(y)$, and $K_z$ is the smoothing kernel of the operator $P_z = \D_z^{\,m} \exp(-t \D_z^2)$ on $X$. Since $X$ is closed, $P_z$ is of trace class with
\[
\Tr (P_z) = \int_X \tr\,(K_z (t;x_0,x_0))\,dx_0.
\]
Therefore, we can write
\begin{equation}\label{E:rtr}
\btr(P) = \lim_{N \to \infty} \left[ \int_{Z_N} \tr\,(K(t;x,x))\,dx\; -\; 
\frac {N + 1}{2\pi i}\; \oint_{|z| = 1} \Tr (P_z)\;\frac{dz}z\right]
\end{equation}

\smallskip


\section{A commutator trace formula}\label{S:trace}
Let $P$ and $Q$ be the chiral versions of the operator $\D\,\exp(-t\D^2)$ on $\zp$ and define $\btr [P,Q] = \btr (PQ) - \btr (QP)$. The purpose of this section is to derive a formula for $\btr [P,Q]$ solely in terms of data on $X$. This formula will be the main ingredient in the proof of index theorem in Section \ref{S:proof}.


\subsection{First step}\label{S:red1}
Let us fix $s > 0$ and $t > 0$ and consider the operators $P = \D^-\exp(-s \D^+\D^-)$ and $Q = \D^+\exp(-t \D^-\D^+)$ on $\zp$. Note that the regularized traces $\btr (PQ)$ and $\btr (QP)$ are well defined because both operators $PQ$ and $QP$ are of the type described in Section \ref{S:b-trace}. This follows from the identity $\D^{\mp}\exp(-t \D^{\pm}\D^{\mp}) = \exp(-t \D^{\mp}\D^{\pm}) \D^{\mp}$ which is easily verified using \eqref{E:one}. Write
\[
(P u)(x) = \int_{\zp} K_P (x,y) u(y)\,dy,\quad
(Q u)(x) = \int_{\zp} K_Q (x,y) u(y)\,dy,
\]
then
\[
(PQ)u(x) = \int_{\zp} \left(\;\int_{\zp} K_P (x,y)\, K_Q (y,z)\,dy\;\right)
\,u(z)\,dz,
\]
so that
\[
K_{PQ}(x,x) = \int_{\zp} K_P (x,y) K_Q (y,x)\,dy.
\]
Similarly,
\[
K_{QP}(x,x) = \int_{\zp} K_Q (x,y) K_P (y,x)\,dy.
\]
For any fixed $N$,
\begin{alignat*}{1}
\int_{Z_N} (\tr K_{PQ})(x,x)\,dx &= 
\iint_{Z_N\times \zp} \tr\, (K_P (x,y) K_Q (y,x))\,dx\,dy
\end{alignat*}
and 
\begin{alignat*}{1}
\int_{Z_N} (\tr K_{QP})(x,x)\,dx &= 
\iint_{Z_N\times \zp} \tr\, (K_Q (x,y) K_P (y,x))\,dx\,dy \\ &=
\iint_{\zp\times Z_N} \tr\, (K_P (x,y) K_Q (y,x))\,dx\,dy.
\end{alignat*}

\smallskip

The Gaussian estimates \eqref{E:short} ensure that the double integrals in the last 
two formulas are absolutely convergent and in particular that changing the order of 
integration is justified. 
Therefore, we can write
\begin{multline}\label{E:star}
\int_{Z_N} (\tr K_{PQ})(x,x)\,dx - \int_{Z_N} (\tr K_{QP})(x,x)\,dx \\ =
\iint_{\Delta_+} \tr\, (K_P (x,y) K_Q (y,x))\,dx\,dy \\ -
\iint_{\Delta_-} \tr\, (K_P (x,y) K_Q (y,x))\,dx\,dy,
\end{multline}

\smallskip\noindent
where $\Delta_+ = Z_N \times (\zp - Z_N)$ and $\Delta_- = (\zp - Z_N) \times Z_N$ are shown schematically in Figure \ref{fig1}. Note that $\Tr [P,Q]_z = \Tr [P_z,Q_z] = 0$ on the closed manifold $X$ hence the correction term in the expression for $\btr [P,Q]$ vanishes and $\btr [P,Q]$ is obtained by simply passing to the limit as $N \to \infty$ in \eqref{E:star}.

\begin{figure}[!ht]
\centering
\psfrag{k}{$k$}
\psfrag{l}{$\ell$}
\psfrag{D+}{$\Delta_+$}
\psfrag{D-}{$\Delta_-$}
\psfrag{N}{$N$}
\psfrag{N+1}{$N+1$}
\includegraphics{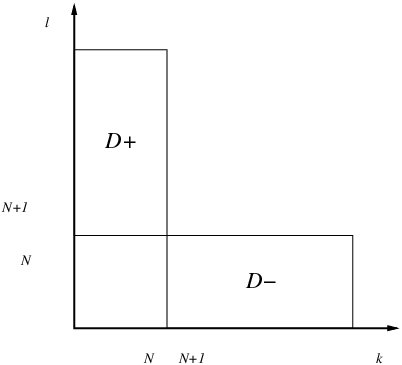}
\caption{}
\label{fig1}
\end{figure}

Before we go on, we will introduce some notation. Given two numerical sequences 
$A_N$ and $B_N$, we will write $A_N \eq B_N$ to mean that $A_N - B_N \to 0$ 
as $N \to \infty$. 

\begin{lemma}\label{L:4.2}
\begin{multline}\notag
\iint_{\Delta_-} \tr\, (K_P (x,y) K_Q (y,x))\,dx\,dy \\
\eq \;\sum_{\ell = 0}^N\; \sum_{k = N+1}^{\infty} \;\iint_{\,W_k\times W_{\ell}}
\tr\, (K_P (x,y) K_Q (y,x))\,dx\,dy.
\end{multline}
\end{lemma}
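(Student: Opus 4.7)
The idea is to decompose $\Delta_- = (\zp - Z_N) \times Z_N$ according to the fundamental pieces of $Z_N$, match all but one piece with the right-hand side of the claim, and show the remaining piece is an error of size $o(1)$ as $N\to\infty$.

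Since $Z_N = Z \cup W_0 \cup \cdots \cup W_N$ and $\zp - Z_N = \bigcup_{k\geq N+1} W_k$, I would write
\[
\iint_{\Delta_-} \tr(K_P(x,y)K_Q(y,x))\,dx\,dy \;=\; R_N \;+\; \sum_{\ell=0}^{N}\sum_{k=N+1}^{\infty}\iint_{W_k\times W_\ell}\tr(K_P(x,y)K_Q(y,x))\,dx\,dy,
\]
where
\[
R_N \;=\; \iint_{(\zp - Z_N)\times Z}\tr(K_P(x,y)K_Q(y,x))\,dx\,dy.
\]
The double sum is exactly the right-hand side of the lemma, so it remains only to prove $R_N\to 0$ as $N\to\infty$.

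For this I would apply the Gaussian off-diagonal bounds of Section~\ref{S:estimates} (cited as \eqref{E:short}) to the smoothing kernels of $P=\D^-\exp(-s\D^+\D^-)$ and $Q=\D^+\exp(-t\D^-\D^+)$. Up to constants depending only on $s,t$, these bounds give estimates of the form $|K_P(x,y)|,\,|K_Q(y,x)|\leq A\,e^{-a\,d(x,y)^2}$. On $\zp$, the distance from a point in $W_k$ to any point in $Z$ is at least $c_0(k-1)$ for $k$ large, so for $x\in W_k$ with $k\geq N+1$ and $y\in Z$,
\[
|K_P(x,y)||K_Q(y,x)|\;\leq\;A^2 e^{-2a\,d(x,y)^2}\;\leq\;A^2 e^{-c_1(k-1)^2}.
\]
Since $Z$ and each $W_k$ have bounded (and uniform) volume, integration and the geometric-like convergence of $\sum_{k\geq N+1} e^{-c_1(k-1)^2}$ yield $|R_N|\leq C\sum_{k\geq N+1}e^{-c_1(k-1)^2}\to 0$, completing the proof.

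The only delicate point I foresee is justifying Gaussian bounds not for $\exp(-t\D^2)$ itself but for its composition with the first-order operator $\D^\pm$. I expect this to be covered by the results collected in Section~\ref{S:estimates}, either directly or via the identities $\D^{\mp}\exp(-t\D^\pm\D^\mp)=\exp(-t\D^\mp\D^\pm)\D^\mp$ combined with standard derivative estimates for the heat kernel on manifolds of bounded geometry; the end-periodicity of $\zp$ is precisely what keeps all the geometric constants uniform in $k$.
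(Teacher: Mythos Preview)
Your proposal is correct and follows essentially the same route as the paper: identify the difference between the two sides as the integral over $(\zp - Z_N)\times Z$, then use the Gaussian estimate \eqref{E:short} on each block $W_k\times Z$ to bound this by a tail sum $\sum_{k\ge N+1} e^{-c\,k^2}$, which tends to zero. Your closing remark about the applicability of \eqref{E:short} to $\D^\pm\exp(-t\D^\mp\D^\pm)$ is addressed in the paper immediately after that estimate, where it is noted that the same bounds hold for $\D^m\exp(-t\D^2)$ and their chiral versions.
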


\smallskip

\begin{proof}
The difference between the two sides of the equation is the integral 
\[
\iint_{(\zp - Z_N)\times Z} \tr\, (K_P (x,y) K_Q (y,x))\,dx\,dy,
\]
whose absolute value can be estimated using \eqref{E:short} by a multiple of
\[
\sum_{k = N+1}^{\infty}\;\iint_{W_k \times Z}\;e^{-C k^2/t}\,dx\,dy\; \to\, 0
\quad\text{as\; $N\; \to \infty$}.
\]
\end{proof}

Change the variables of summation from $k$ and $\ell$ to $m = k - \ell$ 
and $k$, then the right hand side of the equation in Lemma \ref{L:4.2} 
becomes
\begin{multline}\label{E:sum1}
\sum_{m = 1}^N\; \sum_{k = N+1}^{N+m}\;\iint_{\,W_k\times W_{k-m}}
\tr\, (K_P (x,y) K_Q (y,x))\,dx\,dy \\ 
+ \sum_{m = N+1}^{\infty}\; \sum_{k=m}^{N+m}\;\iint_{\,W_k\times W_{k-m}}
\tr\, (K_P (x,y) K_Q (y,x))\,dx\,dy.
\end{multline}

\medskip\noindent
With the help of this formula, we will trade the kernels of $P$ and $Q$ on 
$\zp$ for the kernels of $\tilde P$ and $\tilde Q$ on $\tilde X$, at the 
expense of adding a term that approaches zero as $N \to \infty$. 

\begin{lemma}\label{L:4.3}
\begin{multline}\notag
\iint_{\Delta_-} \tr\, (K_P (x,y) K_Q (y,x))\,dx\,dy \\
\eq\; \sum_{m = 1}^{\infty}\; \sum_{k = N+1}^{N+m}\;\iint_{\,W_k\times W_{k-m}}
\tr\, (K_{\tilde P} (x,y) K_{\tilde Q} (y,x))\,dx\,dy.
\end{multline}
\end{lemma}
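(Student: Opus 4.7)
The plan is to start from the identity \eqref{E:sum1}, write its right-hand side as $S_1(N) + S_2(N)$ with $S_1$ the first double sum (over $1 \le m \le N$) and $S_2$ the second (over $m \ge N+1$), and let $T(N)$ denote the target expression on the right of the lemma. Three claims together yield the result: (a) $S_2(N) \eq 0$; (b) $S_1(N) \eq \widetilde S_1(N)$, where $\widetilde S_1$ is obtained from $S_1$ by replacing $K_P, K_Q$ by $K_{\tilde P}, K_{\tilde Q}$; and (c) $T(N) \eq \widetilde S_1(N)$, i.e.\ the complementary tail $\widetilde S_2(N) := T(N) - \widetilde S_1(N)$ (with $m \ge N+1$) is negligible.

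For (a) and (c) the argument mirrors that of Lemma \ref{L:4.2}. For $x \in W_k$ and $y \in W_{k-m}$ the Riemannian distance is at least of order $m$, so the short-time Gaussian estimate \eqref{E:short} gives
\[
|\tr(K_P(x,y) K_Q(y,x))|\;\le\;C_1\, e^{-C_2 m^2/(s+t)},
\]
with an analogous bound for the $K_{\tilde P}, K_{\tilde Q}$ product. Since each inner sum contains at most $m$ terms,
\[
|S_2(N)| + |\widetilde S_2(N)|\;\le\;C \sum_{m \ge N+1} m\, e^{-C_2 m^2/(s+t)}\;\longrightarrow\;0.
\]

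For (b) I would decompose
\[
K_P K_Q - K_{\tilde P} K_{\tilde Q} \;=\; (K_P - K_{\tilde P})\, K_Q \;+\; K_{\tilde P}\,(K_Q - K_{\tilde Q}),
\]
and combine the off-diagonal version of Corollary \ref{C:diff}, which bounds $|K_P(x,y) - K_{\tilde P}(x,y)|$ by $C e^{-c(k-m)^2/s}$ for $x \in W_k$, $y \in W_{k-m}$ with $1 \le k-m \le N < k$, together with $|K_Q(y,x)| \le C e^{-c m^2/t}$ (and the symmetric estimate for the second summand). Each block thus contributes at most $C e^{-c(k-m)^2/s}\, e^{-c m^2/t}$. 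Writing $j = k-m$ and splitting the double sum at $m = N/2$ shows that both halves vanish as $N \to \infty$: in the range $m \le N/2$ one has $j \ge N/2 + 1$, so the $s$-Gaussian is exponentially small in $N$; in the range $m > N/2$, the $t$-Gaussian is.

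The main obstacle is step (b). Neither the bare heat-kernel Gaussian nor the on-diagonal version of Corollary \ref{C:diff} alone is sufficient: one needs a genuine off-diagonal difference estimate capturing exponential decay in the distance from the arguments to the compact piece $Z$, compatible with Gaussian decay in $d(x,y)$. The symmetric decomposition of $K_P K_Q - K_{\tilde P} K_{\tilde Q}$ is essential, since $(K_P - K_{\tilde P})$ fails to decay uniformly when $y$ approaches $Z$; the compensation is that in that regime $d(x,y)$ is large, so the partner factor $K_Q$ decays.
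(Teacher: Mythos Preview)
Your argument is correct and matches the paper's proof essentially step for step: the same three-part reduction (tail of \eqref{E:sum1} vanishes, tail of the $\tilde K$-sum vanishes, and the difference on $1\le m\le N$ vanishes), the same algebraic splitting of $K_P K_Q - K_{\tilde P}K_{\tilde Q}$ (the paper groups it as $K_P(K_Q-K_{\tilde Q})+(K_P-K_{\tilde P})K_{\tilde Q}$, which is equivalent to yours), and the same cut at $m=\lfloor N/2\rfloor$. The ``off-diagonal version of Corollary~\ref{C:diff}'' you invoke is exactly Proposition~\ref{P:diff2} in the paper; for $m>\lfloor N/2\rfloor$ the paper simply bounds the difference factor by the crude Gaussian \eqref{E:short} rather than Proposition~\ref{P:diff2}, but your version (bounding it by $1$ and letting the partner Gaussian carry the decay) works just as well.
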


\begin{proof}
We begin by noting that the second term in \eqref{E:sum1} approaches zero 
as $N \to \infty$ because it can be estimated by a multiple of
\begin{multline}\notag
\sum_{m = N+1}^{\infty} (N+1)\,e^{-C (m-1)^2/t}\;\le\;
(N+1) \sum_{m = N+1}^{\infty} e^{-C (m-1)/t} \\ \le\;
(N+1)\, e^{-C N/t}\, \sum_{n = 0}^{\infty} e^{-C n/t} \; \to 0,
\end{multline}
where $n = m - (N+1)$ and $C$ is a positive constant. Similarly, we obtain
\begin{multline}\notag
\sum_{m = N+1}^{\infty}\; \sum_{k = N+1}^{N+m}\;\iint_{\,W_k\times W_{k-m}}
|\tr\, (K_{\tilde P} (x,y) K_{\tilde Q} (y,x))|\,dx\,dy \\ \le\;
C_0\,\sum_{m = N+1}^{\infty}\;m\,e^{-C(m-1)^2/t}\;\to 0\quad
\text{as\; $N \to \infty$},
\end{multline}
where $C_0$ and $C$ are positive constants. All that is left to estimate is
\[
\sum_{m = 1}^N\; \sum_{k = N+1}^{N+m}\;\iint_{\,W_k\times W_{k-m}}
\tr\, (K_P (x,y) K_Q (y,x) - K_{\tilde P} (x,y) K_{\tilde Q} (y,x))\,dx\,dy.
\]

\smallskip\noindent
Write 
\begin{multline}\notag
|K_P (x,y) K_Q (y,x) - K_{\tilde P} (x,y) K_{\tilde Q} (y,x)| \;\le \\
|K_P (x,y)||K_Q (y,x) - K_{\tilde Q} (y,x)| +
|K_P (x,y) - K_{\tilde P} (x,y)||K_{\tilde Q} (y,x)|
\end{multline}
and estimate the integrals of each of the summands on the right separately. 
By breaking the summation over $m$ into two parts we obtain  
\begin{multline}\notag
\sum_{m = 1}^{\lfloor N/2\rfloor}\; \sum_{k = N+1}^{N+m}\;\iint_{W_k\times W_{k-m}}
|K_P (x,y)||K_Q (y,x) - K_{\tilde Q} (y,x)|\,dx\,dy\\
\le \;C_0\,\sum_{m = 1}^{\lfloor N/2\rfloor}\; m\,e^{-C_1 (m-1)^2/s}\,e^{-C_2 N^2/t} 
\\ \le\; C_0\,e^{-C_2 N^2/t}\,\sum_{m = 1}^{\infty}\; m\,e^{-C_1 (m-1)^2/s}\;\to\; 0,
\end{multline}
using the estimate \eqref{E:short} for $|K_P (x,y)|$ and that 
from Proposition \ref{P:diff2} for $|K_Q (y,x)-K_{\tilde Q} (y,x)|$, and 
\begin{multline}\notag
\sum_{m = \lfloor N/2\rfloor+1}^N\; \sum_{k = N+1}^{N+m}\;\iint_{\,W_k\times W_{k-m}}
|K_P (x,y)||K_Q (y,x) - K_{\tilde Q} (y,x)|\,dx\,dy \\
\le\;C_0\,\sum_{m = \lfloor N/2\rfloor +1}^{\infty}\; m\,e^{-C_1 (m-1)^2(1/t + 1/s)}\;\to 0,
\end{multline}
using the estimate \eqref{E:short} for both $|K_P (x,y)|$ 
and $|K_Q (y,x)-K_{\tilde Q} (y,x)|$. The estimates for $|K_P (x,y) - 
K_{\tilde P} (x,y)||K_{\tilde Q} (y,x)|$ are similar. 
\end{proof}

By replacing $\Delta_-$ with $\Delta_+$ and proceeding exactly as above, we 
obtain the following result. 

\begin{lemma}\label{L:delta+}
\begin{multline}\notag
\iint_{\Delta_+} \tr\, (K_P (x,y) K_Q (y,x))\,dx\,dy \\ \eq\; 
\sum_{m = -1}^{-\infty}\; \sum_{\ell = N+1}^{N-m}\;\iint_{\,W_{m+\ell}\,\times W_{\ell}}
\tr\, (K_{\tilde P} (x,y) K_{\tilde Q} (y,x))\,dx\,dy.
\end{multline}
\end{lemma}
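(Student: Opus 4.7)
My plan is to exploit the symmetry between $\Delta_+$ and $\Delta_-$ induced by swapping the two coordinates of integration, and then invoke Lemma \ref{L:4.3} with the roles of $P$ and $Q$ reversed. Observe that relabeling the dummy variables $x$ and $y$ carries $\Delta_+ = Z_N\times(\zp - Z_N)$ onto $\Delta_- = (\zp - Z_N)\times Z_N$, while cyclicity of the matrix trace reshuffles the integrand:
\begin{equation*}
\iint_{\Delta_+} \tr\,(K_P(x,y)K_Q(y,x))\,dx\,dy \;=\; \iint_{\Delta_-} \tr\,(K_Q(x,y)K_P(y,x))\,dx\,dy.
\end{equation*}

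Next, I would apply Lemma \ref{L:4.3} with $P$ and $Q$ interchanged. This is legitimate because the proof of Lemma \ref{L:4.3} uses only the short-time Gaussian bounds \eqref{E:short} and the heat-kernel difference estimates of Proposition \ref{P:diff2}, each of which is symmetric in the two smoothing operators. The conclusion reads
\begin{equation*}
\iint_{\Delta_-} \tr\,(K_Q(x,y)K_P(y,x))\,dx\,dy \;\eq\; \sum_{m=1}^{\infty}\sum_{k=N+1}^{N+m}\;\iint_{W_k\times W_{k-m}} \tr\,(K_{\tilde Q}(x,y)K_{\tilde P}(y,x))\,dx\,dy.
\end{equation*}

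I would then swap the variables $x$ and $y$ back inside each inner integral and apply cyclicity once more to restore the factor order $K_{\tilde P}K_{\tilde Q}$, converting each integral into one over $W_{k-m}\times W_k$ of $\tr\,(K_{\tilde P}(x,y)K_{\tilde Q}(y,x))$. A final reindexing by $m' = -m$ and $\ell' = k$ turns the constraints $m\ge 1$ and $N+1\le k\le N+m$ into $m'\le -1$ and $N+1\le \ell'\le N - m'$, while $W_{k-m}\times W_k$ becomes $W_{m'+\ell'}\times W_{\ell'}$, reproducing the statement exactly.

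The main potential obstacle is verifying that Lemma \ref{L:4.3} remains valid after the swap $P\leftrightarrow Q$; since the two operators $\D^-\exp(-s\D^+\D^-)$ and $\D^+\exp(-t\D^-\D^+)$ enter the proof only through their Gaussian and difference bounds from Section \ref{S:estimates}, and these bounds hold symmetrically for both, this verification is routine bookkeeping rather than a substantive step.
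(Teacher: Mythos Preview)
Your proof is correct. The paper, by contrast, simply says to repeat the arguments of Lemmas~\ref{L:4.2} and~\ref{L:4.3} verbatim with $\Delta_+$ in place of $\Delta_-$, which amounts to redoing the index bookkeeping (now with $x\in Z_N$ and $y\in \zp-Z_N$) and re-running the same Gaussian and difference estimates. Your route is a genuine shortcut: by exploiting the swap $(x,y)\mapsto(y,x)$ together with cyclicity of the matrix trace, you reduce the $\Delta_+$ statement to the already-proven $\Delta_-$ statement with $P$ and $Q$ interchanged, and then only need to reindex at the end. This buys you economy---no estimates need to be repeated---at the cost of having to check that Lemmas~\ref{L:4.2} and~\ref{L:4.3} are symmetric in $P$ and $Q$, which (as you note) is immediate since both operators satisfy the same bounds from Section~\ref{S:estimates}. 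The paper's direct repetition avoids that symmetry check but is otherwise no more than a relabeling of your argument.
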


\medskip
The kernels $K_{\tilde P}$ and $K_{\tilde Q}$ have the property that 
$K_{\tilde P}(x+n,y+n) = K_{\tilde P} (x,y)$ for any integer $n$ and
similarly for $K_{\tilde Q}$; see \eqref{E:per}. Use this observation 
together with Lemma \ref{L:4.3}, Lemma \ref{L:delta+} and formula 
\eqref{E:star} to obtain
\begin{alignat*}{1}
\int_{Z_N} & (\tr K_{PQ})(x,x)\,dx - \int_{Z_N} (\tr K_{QP})(x,x)\,dx\;\eq \\
& \sum_{m = -1}^{-\infty}\; \sum_{\ell = N+1}^{N-m}\;\iint_{\,W_0\times W_0}
\tr\, (K_{\tilde P} (x+m,y) K_{\tilde Q} (y,x+m))\,dx\,dy\, - \\
& \quad\sum_{m = 1}^{\infty}\; \sum_{k = N+1}^{N+m}\;\iint_{\,W_0\times W_0}
\tr\, (K_{\tilde P} (x+m,y) K_{\tilde Q} (y,x+m))\,dx\,dy
\end{alignat*}

\smallskip\noindent
as $N \to \infty$. Next, observe that the integrands in the above 
formula do not depend on $k$ or $\ell$ hence this last formula can be 
written as follows
\begin{alignat*}{1}
\int_{Z_N} & (\tr K_{PQ})(x,x)\,dx - \int_{Z_N} (\tr K_{QP})(x,x)\,dx\;\eq \\
& - \sum_{m = -\infty}^{\infty}\;m\cdot\iint_{\,W_0\times W_0}
\tr\, (K_{\tilde P} (x+m,y) K_{\tilde Q} (y,x+m))\,dx\,dy.
\end{alignat*}

\noindent
Finally, pass to the limit as $N \to \infty$ in this formula to conclude that
\begin{multline}\label{E:star2}
\btr [P,Q] = \\ - \sum_{m = -\infty}^{\infty}\;m\cdot\iint_{\,W_0\times W_0}
\tr\, (K_{\tilde P} (x+m,y) K_{\tilde Q} (y,x+m))\,dx\,dy.
\end{multline}

\medskip


\subsection{Second step}\label{S:step2}
Our next task will be to calculate the expression 
\begin{equation}\label{E:kk}
- \sum_{m = -\infty}^{\infty}\;m\cdot K_{\tilde P} (x+m,y) K_{\tilde Q} (y,x+m),
\quad x, y \in W_0,
\end{equation}
in terms of the kernels $K_{P_z}$ and $K_{Q_z}$ of the holomorphic families
$P_z$ and $Q_z$. 

\begin{lemma} (Parseval's relation)\;
For any $x \in W_0$, we have 
\[
\sum_m\; u(x+m)\,v(x+m) \;=\; \frac 1 {2\pi i}\;
\oint_{|z| = 1} \w u_z (x)\;\w v_{1/z}(x)\;\frac {dz} z.
\]
\end{lemma}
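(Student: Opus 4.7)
The plan is to prove this identity by direct substitution of the defining formula for the Fourier--Laplace transform, followed by term-by-term contour integration using the orthogonality of the functions $z^n$ on the unit circle.

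First I would write out the two transforms using the definition
\[
\hat u_z(x) = z^{f(x)}\sum_{n} z^{n} u(x+n), \qquad \hat v_{1/z}(x) = z^{-f(x)}\sum_{k} z^{-k} v(x+k).
\]
The factors $z^{f(x)}$ and $z^{-f(x)}$ cancel in the product, giving
\[
\hat u_z(x)\,\hat v_{1/z}(x) = \sum_{n,k} z^{\,n-k}\, u(x+n)\, v(x+k).
\]
Substituting this into the right-hand side of the claimed formula and exchanging the sum with the contour integral reduces the problem to evaluating $\frac{1}{2\pi i}\oint_{|z|=1} z^{n-k-1}\,dz$, which is $\delta_{n,k}$ by the residue theorem. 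Only the diagonal terms $n=k$ survive, producing exactly $\sum_m u(x+m)\,v(x+m)$.

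The only substantive step is the interchange of summation and integration. In the application of this lemma inside Section~\ref{S:step2}, the roles of $u$ and $v$ are played by the slices $x\mapsto K_{\tilde P}(x+m,y)$ and $x\mapsto K_{\tilde Q}(y,x+m)$, for which the Gaussian off-diagonal decay established in Section~\ref{S:estimates} (used already in the proofs of Lemmas~\ref{L:4.3} and~\ref{L:delta+}) gives rapid decay in $m$. This decay ensures absolute convergence of the double sum uniformly in $z$ on the unit circle, so Fubini applies and the interchange is legitimate. In the generic statement of the lemma as it appears, one may simply assume $u$ and $v$ have compact support (making both Fourier--Laplace sums finite), in which case the interchange is trivial.

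The main, and essentially only, obstacle is to state the lemma with a hypothesis that is simultaneously weak enough to cover the intended application and strong enough to legitimize the exchange of sum and integral; everything else is a one-line computation.
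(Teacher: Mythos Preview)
Your proof is correct and follows essentially the same route as the paper: write out $\hat u_z(x)$ and $\hat v_{1/z}(x)$ from the definition, note the cancellation of $z^{\pm f(x)}$, and use orthogonality of $z^{n-k}$ on the unit circle to collapse the double sum to the diagonal. The paper simply assumes $u$ and $v$ have compact support (so the sums are finite and the interchange is trivial); your additional remarks about Gaussian decay in the actual application are a reasonable elaboration but not part of the paper's proof of this lemma.
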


\begin{proof}
According to the definition of the Fourier--Laplace transform, for any 
$u$ and $v$ with compact support, we have 
\[
\w u_z(x) = z^{f(x)}\,\sum_n \,z^n\,u(x+n)\quad \text{and}\quad 
\w v_{1/z}(x) = z^{-f(x)}\,\sum_m \,z^{-m}\,v(x+m). 
\]
Therefore, 
\[
\frac 1{2\pi i}\,\oint_{|z| = 1}\w u_z(x)\;\w v_{1/z}(x)\;\frac{dz}z\; 
=\;\frac 1{2\pi i}\,\sum_{m,n}\;\oint_{|z| = 1}z^{n-m}\,u(x+n)\,v(x+m)
\;\frac{dz}z,
\]
and the result obviously follows. 
\end{proof}

\begin{lemma}\label{L:kk}
For any $x \in W_0$, we have 
\[
\sum_m\; m\, u(x+m)\,v(x+m) \;=\; \frac 1 {2\pi i}\;
\oint_{|z| = 1} \frac {\p}{\p z}\left(z^{-f(x)}\,\w u_z (x)\right)\,
z^{f(x)}\,\w v_{1/z}(x)\,dz.
\]
\end{lemma}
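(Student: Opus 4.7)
The plan is to differentiate the Parseval-type identity of the preceding lemma with respect to a parameter, then use the fact that inside the contour integral the factor of $z$ appearing in the definition of the Fourier--Laplace transform can be stripped off cleanly. Concretely, I would proceed as follows.

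First I would untwist: since $\w u_z(x) = z^{f(x)}\sum_n z^n u(x+n)$, the product $z^{-f(x)} \w u_z(x)$ is the ordinary generating function
\[
F(z) \;=\; \sum_n z^n\,u(x+n),
\]
and similarly $z^{f(x)}\w v_{1/z}(x) = \sum_m z^{-m}\,v(x+m) =: G(z)$. Here $x \in W_0$ is held fixed, and because $u,v$ have compact support in $\tilde X$ both sums are finite (hence $F$ is a Laurent polynomial in $z$ and $G$ a Laurent polynomial in $1/z$), so differentiation, multiplication, and contour integration may all be interchanged freely with the finite sums.

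Next I would compute $\partial F/\partial z = \sum_n n z^{n-1} u(x+n)$, so the integrand on the right-hand side becomes
\[
\frac{\partial}{\partial z}\bigl(z^{-f(x)}\w u_z(x)\bigr)\,z^{f(x)}\w v_{1/z}(x)
\;=\; \sum_{n,m}\, n\, z^{n-m-1}\, u(x+n)\,v(x+m).
\]
Integrating term by term around $|z|=1$ with respect to $dz/(2\pi i)$ picks out precisely the residue at $z = 0$ of each monomial, i.e.\ the terms with $n - m - 1 = -1$, equivalently $n = m$. The sum collapses to
\[
\sum_m m\, u(x+m)\, v(x+m),
\]
which is the left-hand side of the lemma.

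The only non-routine point is the legitimacy of termwise differentiation and integration, but this is immediate from the compact support of $u,v$ reducing both $F$ and $G$ to finite Laurent sums; no convergence issues arise and no obstacle beyond bookkeeping is present. (One may view the result as the identity of the preceding lemma applied to the pair $\tilde u(x+m) := m\,u(x+m)$ and $v$, together with the observation that multiplication by $m$ on the $u$-side corresponds to applying $z\,\partial/\partial z$ to the untwisted transform $F(z)$, which accounts for the appearance of the derivative in the statement.)
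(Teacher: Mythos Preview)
Your proof is correct and is essentially the same as the paper's: the paper also untwists to the Laurent polynomials $z^{-f(x)}\,\w u_z(x) = \sum_n z^n u(x+n)$ and $z^{f(x)}\,\w v_{1/z}(x) = \sum_m z^{-m} v(x+m)$, then plugs into the contour integral and reads off the result. Your added remarks about compact support justifying termwise operations are appropriate but not strictly needed beyond what the paper already assumes.
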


\begin{proof}
The result follows as above by plugging into the contour integral the 
expressions
\[
z^{-f(x)}\, \w u_z(x) = \sum_n \,z^n\,u(x+n)\quad \text{and}\quad 
z^{f(x)}\, \w v_{1/z}(x) = \sum_m \,z^{-m}\,v(x+m). 
\]
\end{proof}

We will apply Lemma \ref{L:kk} with $u(x+m) = K_{\tilde P} (x+m,y)$ and 
$v(x+m) = K_{\tilde Q} (y,x+m)$, where $x, y \in W_0$. Comparing 
\[
u(x+m) = \frac 1 {2\pi i}\;\oint_{|z|=1} z^{-f(x)-m}\; \w u_z(x)\;\frac {dz} z
\]
with 
\[
K_{\tilde P} (x+m,y) = \frac 1 {2\pi i}\;\oint_{|z|=1} z^{f(y)-f(x)-m}\,
K_{P_z}(x,y)\;\frac {dz} z;
\]
see \eqref{E:kkz}, we obtain
\[
\w u_z (x) = z^{f(y)}\, K_{P_z}(x,y);
\]
see also \eqref{E:wkz}. Similarly, substitute $w = 1/z$ in 
\[
v(x+m) = \frac 1 {2\pi i}\;\oint_{|w|=1} w^{-f(x)-m}\; \w v_w(x)\;\frac {dw} w
\]
to obtain 
\[
v(x+m) = \frac 1 {2\pi i}\;\oint_{|z|=1} z^{f(x)+m}\; \w v_{1/z}(x)\;
\frac {dz} z.
\]
Comparing the latter with 
\[
K_{\tilde Q} (y,x+m) = \frac 1 {2\pi i}\;\oint_{|z|=1} z^{f(x)+m-f(y)}\, 
K_{Q_z}(y,x)\;\frac {dz} z;
\]
see \eqref{E:kkz}, we obtain
\[
\w v_{1/z} (x) = z^{-f(y)}\, K_{Q_z}(y,x). 
\]
Substitute the above in the formula of Lemma \ref{L:kk} to obtain the 
following formula for the expression \eqref{E:kk}\,:
\begin{multline}\notag
- \sum_m\;m\cdot K_{\tilde P} (x+m,y) K_{\tilde Q} (y,x+m) \\ 
= - \frac 1 {2\pi i}\,\oint_{|z| = 1}\;z^{f(x) - f(y)}\;\frac {\p}{\p z}
\left(z^{-f(x) + f(y)} K_{P_z}(x,y)\right) K_{Q_z}(y,x)\,dz.
\end{multline}
A direct calculation shows that the latter equals 
\begin{multline}\label{E:kk1}
(f(x) - f(y))\cdot\frac 1 {2\pi i}\,\oint_{|z| = 1}\; K_{P_z}(x,y) K_{Q_z}(y,x)
\,\frac{dz} z \\
- \frac 1 {2\pi i}\,\oint_{|z| = 1}\;\frac {\p}{\p z}(K_{P_z}(x,y))\,K_{Q_z}
(y,x)\,dz.
\end{multline}

\smallskip

Because of \eqref{E:star2}, to complete the calculation of $\btr [P,Q]$ all we need to do is apply the (matrix) trace to \eqref{E:kk1} and integrate it over $W_0 \times W_0$. An application of this procedure to the second term of \eqref{E:kk1} results in
\[
- \frac 1 {2\pi i}\;\oint_{|z| = 1} \Tr \left(\frac {\p P_z}{\p z}\cdot Q_z
\right)\,dz.
\]
Regarding the first term, we obtain
\begin{multline}\notag
\iint_{W_0\times W_0} (f(x) - f(y))\,\tr\, (K_{P_z}(x,y) K_{Q_z}(y,x))\,dx\,dy \\
= \int_{W_0} f(x)\,\left(\int_X \tr\,(K_{P_z}(x,y)K_{Q_z} (y,x))\,dy\right) dx \\
- \int_{W_0} f(y)\,\left(\int_X \tr\, (K_{P_z}(x,y) K_{Q_z} (y,x))\,dx\right) dy, 
\end{multline}
which equals
\[
\int_{W_0} f(x)\cdot\tr\, \left(K_{\,P_z Q_z}\,(x,x) - K_{\,Q_z P_z}\,(x,x)\right)\,dx.
\]
 
\begin{theorem}\label{T:comm}
Let $P = \D^-\exp(-s \D^+\D^-)$ and $Q = \D^+\exp(-t \D^-\D^+)$ be operators 
on $\zp$ then 
\begin{multline}\notag
\btr\,[P,Q] = \\ \frac 1 {2\pi i}\;\oint_{|z| = 1} 
\left(\int_{W_0}f(x)\cdot\tr\,\left(K_{\,P_z Q_z}\,(x,x)-K_{\,Q_z P_z}\,(x,x)\right)
\,dx\,\right) \frac{dz} z \\
- \frac 1 {2\pi i}\;\oint_{|z| = 1} \Tr \left(\frac {\p P_z}{\p z}\cdot Q_z
\right)\,dz.
\end{multline}
\end{theorem}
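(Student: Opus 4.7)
The plan is to consolidate the chain of calculations already assembled in Sections~\ref{S:red1} and~\ref{S:step2} and identify each of the two contour integrals on the right-hand side. The starting point is formula~\eqref{E:star2}, which already reduces $\btr[P,Q]$ to a weighted sum
\[
\btr[P,Q] = -\sum_{m=-\infty}^{\infty} m \cdot \iint_{W_0\times W_0}
\tr\bigl(K_{\tilde P}(x+m,y)\,K_{\tilde Q}(y,x+m)\bigr)\,dx\,dy
\]
on $\tilde X$, so the remaining work is purely local on one fundamental domain.

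First I would fix $x, y \in W_0$ and apply Lemma~\ref{L:kk} with the choices $u(x+m) = K_{\tilde P}(x+m,y)$ and $v(x+m) = K_{\tilde Q}(y,x+m)$. Using formula~\eqref{E:kkz} together with the identifications $\widehat u_z(x) = z^{f(y)} K_{P_z}(x,y)$ and $\widehat v_{1/z}(x) = z^{-f(y)} K_{Q_z}(y,x)$ (as derived in the text via \eqref{E:wkz}), the integrand of the resulting contour integral becomes
\[
z^{f(x)-f(y)}\,\frac{\p}{\p z}\bigl(z^{-f(x)+f(y)} K_{P_z}(x,y)\bigr)\,K_{Q_z}(y,x).
\]
Expanding the $z$-derivative using the Leibniz rule produces exactly the two-term expression \eqref{E:kk1}: one term carrying the factor $(f(x)-f(y))$ and one term carrying $\p K_{P_z}(x,y)/\p z$.

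Next I would take the pointwise matrix trace and integrate over $W_0 \times W_0$, then exchange this integration with the contour integral over $|z|=1$ (justified because the integrands are continuous and $X$ is compact, so the $z$-family of kernels on $X$ is uniformly bounded). For the term with $\p K_{P_z}/\p z$, Fubini over $W_0 \times X$ (after first integrating $y$ over $X$, noting that all translates of $W_0$ tile $\tilde X$ while $X$ is just one copy) collapses the double integral into $\Tr((\p P_z/\p z)\cdot Q_z)$ on the closed manifold $X$. For the term with $(f(x)-f(y))$, I would split it as a difference of two pieces, and in the $f(y)$ piece swap the names $x \leftrightarrow y$ and use $K_{Q_z}(y,x)K_{P_z}(x,y)$ to recognize $\int_X \tr(K_{P_z}(x,y) K_{Q_z}(y,x))\,dy = \tr(K_{P_z Q_z}(x,x))$ and similarly for the other order; this yields the stated $f(x)\cdot\tr(K_{P_z Q_z}(x,x) - K_{Q_z P_z}(x,x))$ expression integrated over $W_0$.

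The main obstacle I anticipate is bookkeeping the exchanges of summation, integration over $W_0 \times W_0$, and contour integration over $|z|=1$. The Gaussian estimates \eqref{E:short} and Proposition~\ref{P:diff2} used in Lemmas~\ref{L:4.2}--\ref{L:delta+} already yield the absolute convergence required to reach \eqref{E:star2}; after that, the summation in $m$ has been absorbed into a single contour integral by Lemma~\ref{L:kk}, and all remaining operations occur on the compact manifold $X$ with smooth $z$-dependent kernels, where trace-class smoothing theory on closed manifolds applies directly. The only subtle point is ensuring that $\p K_{P_z}/\p z$ is itself a smoothing kernel of trace class depending continuously on $z$, which follows from the holomorphic dependence of the family $P_z = \D_z^m \exp(-s\D_z^+\D_z^-)$ on $z \in \C^*$ and standard heat-kernel regularity on the closed manifold $X$.
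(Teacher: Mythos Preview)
Your proposal is correct and follows essentially the same route as the paper's own argument: starting from formula~\eqref{E:star2}, applying Lemma~\ref{L:kk} with exactly the identifications you give, expanding into the two-term expression~\eqref{E:kk1}, and then tracing and integrating over $W_0\times W_0$ to recognize the two contour integrals. The only cosmetic point is that your remark about ``Fubini over $W_0\times X$'' is really just the observation that the covering projection identifies $W_0$ with $X$ as a measure space, so $\int_{W_0}=\int_X$; with that in mind your bookkeeping matches the paper's exactly.
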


\smallskip

\begin{proof}
The proof is contained in the lengthy calculation that precedes the statement of this theorem; here is a summary. We begin by using some elementary calculus of smoothing kernels on $\zp$ to derive formula \eqref{E:star}. Lemmas \ref{L:4.3} and \ref{L:delta+} show how the right hand side of that formula can be expressed in terms of smoothing kernels on $\tilde X$. Passing to the limit, we arrive at formula \eqref{E:star2}. In Section \ref{S:step2}, that formula is converted into the formula claimed in the theorem using Parseval's relation for the Fourier--Laplace transform. 
\end{proof}


\section{The end-periodic index theorem}\label{S:proof}
Let $\zp$ be an even dimensional end-periodic manifold whose end is modeled 
on the infinite cyclic cover $\tilde X$ of $X$, and assume that the chiral 
Dirac operators
$\D^{\pm} = \D^{\pm} (\zp):\; L^2_1 (\zp,\Sm^{\pm}) \to L^2 (\zp,\Sm^{\mp})$
are Fredholm. For the sake of brevity, introduce the notation
\[
\sbtr (e^{-t \D^2}) = \btr (\exp(-t \D^-\D^+)) - \btr (\exp (-t \D^+\D^-)).
\]
Our calculation of the index of $\D^+$ will rely on the following two 
formulas. 

The first formula is obtained by straightforward differentiation using 
the identity $\D^-\exp(-t \D^+\D^-) = \exp(-t \D^-\D^+) \D^-$\,:
\[
\frac d {dt}\, \sbtr (e^{-t \D^2}) = - \btr\, [\D^-,\D^+\exp(-t \D^-\D^+)].
\]
The second formula is the formula of Theorem \ref{T:comm} with $P = \D^-$ 
and $Q = \D^+\exp(-t\D^-\D^+)$. Since $P = \D^-$ is not a smoothing operator, 
this needs a little justification.

\begin{lemma}
The formula of Theorem \ref{T:comm} holds as stated for $P = \D^-$ and 
$Q = \D^+\exp(-t\D^-\D^+)$.
\end{lemma}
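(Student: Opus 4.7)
The plan is to deduce the lemma from Theorem \ref{T:comm} by an approximation argument in an auxiliary heat time $s > 0$. Set $P_s = \D^-\exp(-s\D^+\D^-)$, which is smoothing, so Theorem \ref{T:comm} applies directly to the pair $(P_s,Q)$. Using the intertwining identity $\D^{\mp}\,e^{-u\D^{\pm}\D^{\mp}}=e^{-u\D^{\mp}\D^{\pm}}\D^{\mp}$ one computes
\[
P_s Q = \D^-\D^+\,e^{-(s+t)\D^-\D^+},\qquad Q P_s = \D^+\D^-\,e^{-(s+t)\D^+\D^-},
\]
and similarly for the Fourier--Laplace transformed operators on $X$. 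I would then show that both sides of the formula of Theorem \ref{T:comm} for $(P_s,Q)$ depend continuously on $s \ge 0$; letting $s \to 0^+$ produces the claimed identity for $(\D^-,Q)$.

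For the left-hand side, both $P_sQ$ and $QP_s$ are smoothing operators of exactly the form treated in Section~\ref{S:b-trace}, with heat-time parameter $s+t > 0$. Lemma~\ref{L:btr} then gives continuity of $\btr(P_sQ)$ and $\btr(QP_s)$ in $s+t$, and so $\btr[P_s,Q] \to \btr[\D^-,Q]$ as $s \to 0^+$.

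For the right-hand side, each integrand depends continuously on $s \ge 0$ uniformly in $z$ on $|z|=1$, and the contour integrals converge accordingly. In the first integrand, $K_{(P_s)_z Q_z}(x,x)-K_{Q_z(P_s)_z}(x,x)$ is the diagonal value of the smoothing kernel of $\D^-_z\D^+_z\,e^{-(s+t)\D^-_z\D^+_z}-\D^+_z\D^-_z\,e^{-(s+t)\D^+_z\D^-_z}$ on the closed manifold $X$, and standard heat-kernel continuity gives the required limit as $s \to 0^+$. In the second integrand, writing $\p(P_s)_z/\p z = -(df/z)\,e^{-s\D^+_z\D^-_z} + \D^-_z\,\p_z e^{-s\D^+_z\D^-_z}$ and expanding the second summand via Duhamel's principle shows it is $O(s)$ in operator norm on any fixed Sobolev space; composed with the smoothing operator $Q_z$ it vanishes in trace norm as $s\to 0^+$, while the first summand converges to the Clifford multiplication operator $-df/z$, so $\Tr((\p(P_s)_z/\p z)\,Q_z) \to -(1/z)\,\Tr(df\cdot Q_z)$ uniformly in $z$.

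The main obstacle is the uniformity in $z$ required to interchange the contour integral with the limit $s\to 0^+$; this is supplied by the holomorphy of the family $\{\D^{\pm}_z\}$ in a neighborhood of the compact circle $|z|=1$ together with standard heat-kernel estimates on the closed manifold $X$. Passing to the limit in the identity of Theorem~\ref{T:comm} for $(P_s,Q)$ then produces the statement of the lemma.
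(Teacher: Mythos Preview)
Your argument is correct and follows essentially the same approach as the paper: approximate $P=\D^-$ by the smoothing family $P_s=\D^-\exp(-s\D^+\D^-)$, apply Theorem~\ref{T:comm} to $(P_s,Q)$, and pass to the limit $s\to 0^+$ using Lemma~\ref{L:btr} for the left-hand side. You supply more detail than the paper on the convergence of the right-hand side (via Duhamel for $\partial_z e^{-s\D^+_z\D^-_z}$), which the paper treats as routine on the closed manifold $X$; this extra justification is sound.
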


\begin{proof} 
Consider the family $P_s = \D^- \exp(-s \D^+\D^-)$ of smoothing operators and apply Theorem \ref{T:comm} to $P_s$ and $Q$ to derive a formula for $\btr [P_s,Q]$. To obtain the formula for $\btr [P,Q]$, simply pass to the limit in this formula as $s \to 0$. The equality
\[
\lim_{s\to 0}\; \btr\, [P_s,Q]\; =\; \btr [P,Q]
\] 
follows from Lemma \ref{L:btr}: for any positive $t$, the regularized trace of $P_s Q = \D^- \D^+ \exp(-(s+t) \D^- \D^+)$ is a continuous function of $s \ge 0$, and so is the regularized trace of $QP_s$.
\end{proof}

Together, the two aforementioned formulas result in 
\begin{multline}\notag
\frac d {dt}\, \sbtr (e^{-t \D^2}) = \\ 
- \frac 1 {2\pi i}\;\oint_{|z|=1}
\int_{W_0} f\cdot\tr \left(K_{\,\D^-_z \D^+_z\exp(-t \D^-_z \D^+_z)} - 
K_{\,\D^+_z \D^-_z\exp(-t \D^+_z \D^-_z)}\right)\,dx\,\frac{dz} z \\
- \frac 1 {2\pi i}\;\oint_{|z| = 1}\;
\Tr \left(d f\cdot \D^+_z \exp (-t \D^-_z \D^+_z)\right)\,\frac {dz} z
\end{multline}
because $\p \D^-_z/\p z = - df/z$. Since
\[
\frac d {dt}\, \exp(-t \D^{\mp}_z \D^{\pm}_z) = 
- \D^{\mp}_z \D^{\pm}_z\exp(-t \D^{\mp}_z \D^{\pm}_z),
\]
we can write 
\begin{multline}\label{E:comm}
\frac d {dt}\, \sbtr (e^{-t \D^2}) = \\ 
\frac 1 {2\pi i}\cdot\frac d {dt}\;\oint_{|z|=1}\left(
\int_{W_0} f\cdot\tr \left(K_{\,\exp(-t \D^-_z \D^+_z)} - 
K_{\,\exp(-t \D^+_z \D^-_z)}\right)\,dx\right)\,\frac{dz} z \\
- \frac 1 {2\pi i}\;\oint_{|z| = 1}\;
\Tr \left(d f\cdot \D^+_z \exp (-t \D^-_z \D^+_z)\right)\,\frac {dz} z
\end{multline}
Integrating the latter formula with respect to $t\in (0,\infty)$, we obtain
an identity whose individual terms are described one at a time in the three
subsections that follow. 


\subsection{The left-hand side}
Integrate the left hand side of \eqref{E:comm} with respect to $t \in (0,\infty)$ to obtain
\[
\lim_{t\to\infty}\,\sbtr (e^{-t \D^2})\;-\;\lim_{t\to 0+}\,\sbtr (e^{-t \D^2}).
\]

Let us first address the limit as $t \to 0$. According to Roe \cite{roe1}, on any manifold of bounded geometry (of even dimension $n$), of which end-periodic manifolds are a special case, we have an asymptotic expansion 
\[
\tr \left(K_{\exp(-t\D^-\D^+)}(x,x)\right)\; \sim\; t^{-n/2}\,\sum_{k \ge 0}\; 
t^k\cdot \psi_k (x),
\]
whose remainder terms, implicit in the asymptotic expansion, are uniformly 
bounded in $x$. The same local calculation as in \cite[page 146]{BGV} then 
gives an asymptotic expansion
\[
\tr\left(K_{\exp(-t\D^-\D^+)}(x,x) - K_{\exp (-t \D^+ \D^-)}(x,x)\right)\;\sim\;
t^{-n/2}\,\sum_{k \ge n/2}\; t^k\cdot \alpha_k (x),
\]
where $\alpha_k (x)$ is locally computable in terms of curvatures and 
their derivatives, and $\alpha_0(x)$ is the index form. In particular, 
on the end-periodic manifold $\zp$ we have
\begin{equation}\label{E:ahat}
\lim_{t\to 0+}\;\tr\left(K_{\exp(-t\D^-\D^+)}(x,x) - K_{\exp (-t \D^+ \D^-)}(x,x)
\right) = \ii(\D^+(\zp))(x)
\end{equation}
uniformly in $x \in \zp$, and similarly on $\tilde X$.

\begin{proposition}\label{P:zero}
At the level of regularized traces, we have 
\[
\lim_{t\to 0+}\, \sbtr (e^{-t \D^2})\; =\; \int_Z \ii(\D^+(Z)).
\]
\end{proposition}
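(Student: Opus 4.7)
The plan is to work directly with the explicit telescoping expression for the regularized trace established in the course of proving that the limit defining $\btr$ exists, and then interchange the $t\to 0$ limit with integration and summation. Writing $K^\pm$ and $\tilde K^\pm$ for the heat kernels of $\exp(-t\D^\mp\D^\pm)$ on $\zp$ and $\tilde X$ respectively, I would begin by rewriting
\[
\sbtr(e^{-t\D^2}) = \int_Z \tr\bigl(K^+ - K^-\bigr)(t;x,x)\,dx + \sum_{k=0}^{\infty}\,a_k(t),
\]
where
\[
a_k(t) = \int_{W_0}\tr\Bigl[(K^+ - K^-)(t;x{+}k,x{+}k) - (\tilde K^+ - \tilde K^-)(t;x{+}k,x{+}k)\Bigr]\,dx.
\]
This is exactly the rearrangement used in the proof of existence of $\btr$, together with the periodicity \eqref{E:per} of $\tilde K$.

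For the integral over $Z$, the local short-time asymptotic expansion \eqref{E:ahat} holds uniformly in $x \in Z$, so compactness of $Z$ yields
\[
\lim_{t\to 0+}\int_Z\tr(K^+ - K^-)(t;x,x)\,dx = \int_Z \ii(\D^+(Z)).
\]
Next, I would show that $a_k(t) \to 0$ as $t \to 0+$ for each fixed $k$. For $k \ge 1$, the exponential estimate \eqref{E:btr1} already gives $|a_k(t)| \le C_1 e^{-C_2(k-1)^2/t}$, which goes to zero as $t \to 0$. For $k = 0$ the two kernels do \emph{not} differ by anything exponentially small, and here I would invoke the locality of the supertrace asymptotic expansion: since the Dirac data on $W_0$ is pulled back from $X$ in both manifolds, the pointwise limits of $\tr(K^+{-}K^-)(t;x,x)$ and $\tr(\tilde K^+{-}\tilde K^-)(t;x,x)$ are both $\ii(\D^+(X))(x_0)$, so their difference converges uniformly to zero on $W_0$.

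Finally, to legitimize exchanging the $t\to 0$ limit with the infinite sum over $k$, I would apply dominated convergence. Fix $t_0 \in (0,1]$; for $t \in (0,t_0]$ and $k \ge 1$ the estimate $e^{-C_2(k-1)^2/t} \le e^{-C_2(k-1)^2/t_0}$ gives a summable dominant, while the single term $a_0(t)$ is bounded by a constant on $(0,t_0]$ using the leading term in the asymptotic expansion. This yields
\[
\lim_{t\to 0+}\sum_{k=0}^{\infty} a_k(t) = \sum_{k=0}^{\infty}\lim_{t\to 0+} a_k(t) = 0,
\]
and combining with the $Z$-integral gives the claim. The main obstacle is conceptual rather than computational: the $k=0$ contribution cannot be controlled by exponential separation of the two manifolds, so one must exploit locality of the heat-kernel supertrace expansion to see that the two leading behaviors cancel on the common piece $W_0$.
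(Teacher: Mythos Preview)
Your proof is correct and follows essentially the same approach as the paper. The paper organizes the argument slightly differently: rather than splitting off $\int_Z$ and summing the $a_k$ directly, it works with the partial sums $s_N(t) = \int_{Z_N}\operatorname{str}(t,x)\,dx - (N+1)\int_{W_0}\widetilde{\operatorname{str}}(t,x)\,dx$, computes $\lim_{t\to 0}s_N(t) = \int_Z \ii(\D^+(Z))$ for each fixed $N$ (the cancellation on the $W_k$ pieces happens automatically here, so no separate treatment of $k=0$ is needed), and then interchanges the $N\to\infty$ and $t\to 0$ limits via a Moore--Osgood argument using the same uniform estimate \eqref{E:btr1} you invoke for dominated convergence. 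Your explicit handling of the $k=0$ term via locality of the supertrace expansion is exactly what the paper's cancellation $\int_{Z_N}\ii - (N+1)\int_{W_0}\ii = \int_Z\ii$ encodes, so the two arguments are the same in substance.
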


\begin{proof}
Use formula \eqref{E:btr} to write 
\[
\sbtr (e^{-t \D^2}) = \lim_{N \to \infty} s_N (t)
\]
with the functions $s_N: (0,\infty) \to \R$ defined by
\begin{equation}\label{E:snt}
s_N(t)\;=\;\int_{Z_N} \str(t,x)\,dx - (N+1) \int_{W_0} \widetilde\str(t,x)\,dx,
\end{equation}
where $\str (t,x) = \tr\left(K_{\exp(-t\D^-\D^+)}(x,x) - K_{\exp(-t\D^+ \D^-)}(x,x)\right)$ for $x \in \zp$, and $\widetilde\str(t,x)$ is given by the same formula for $x \in \tilde X$. It follows from \eqref{E:ahat} that, for any fixed $N$,
\[
\lim_{t \to 0+} s_N(t)\, = \int_{Z_N} \ii(\D^+(\zp)) - (N+1) \int_{W_0} 
\ii(\D^+(\tilde X)) = \int_Z \ii(\D^+(Z)).
\]

\smallskip\noindent
On the other hand, it follows as in the proof of Lemma \ref{L:btr} that the 
limit 
\[
\lim_{N\to\infty} s_N (t)\, =\, \sbtr\, (e^{-t \D^2})
\]
is uniform on all bounded intervals. Therefore, the repeated limits 
\[
\lim_{N\to\infty}\; \lim_{t \to 0+}\, s_N(t)\qquad\text{and}\qquad
\lim_{t\to 0+}\; \lim_{N\to\infty}\, s_N(t)
\]
exist and are equal to each other, which justifies the following 
calculation\,:
\begin{multline}\notag
\lim_{t\to 0+} \sbtr(e^{-t \D^2}) = \lim_{t\to 0+}\; \lim_{N\to\infty}\,s_N(t) =
\\ \lim_{N\to\infty}\; \lim_{t \to 0+}\, s_N(t) = \lim_{N\to \infty} \int_Z 
\ii(\D^+(Z)) = \int_Z \ii (\D^+(Z)).
\end{multline}
\end{proof}

Let us now investigate the limit of $\sbtr (e^{-t \D^2})$ as $t \to \infty$.

\begin{proposition}\label{P:long}
At the level of regularized traces, we have 
\[
\lim_{t\to \infty}\, \sbtr (e^{-t \D^2})\; =\; \ind \D^+ (\zp).
\]
\end{proposition}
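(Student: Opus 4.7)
The plan is an end-periodic McKean--Singer argument: isolate the contribution of the $L^2$ kernels of $\D^{\pm}(\zp)$ using the spectral gap furnished by Fredholmness, and show the complementary part of the heat semigroup contributes nothing in the limit.

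First, I extract a uniform spectral gap. By the Fredholm hypothesis and Proposition~\ref{P:taubes}, the operators $\D^{\pm}_z$ are invertible on the compact circle $|z|=1$, so there is $c>0$ with $\|\D^{\pm}_z u\| \ge c\|u\|$ uniformly in $z$. Via \eqref{E:kkz} and Plancherel this transfers to $\D^{\mp}\D^{\pm}(\tilde X) \ge c^2$, giving $\ker \D^{\pm}(\tilde X) = 0$ and, through self-adjoint Fredholm theory on $\zp$, a spectral gap $(0,c^2)$ above the finite-dimensional kernel of $\D^{\mp}\D^{\pm}(\zp)$. Let $\Pi_{\pm}$ denote the orthogonal projection onto $\ker \D^{\pm}(\zp)$. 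By openness of the Fredholm condition (Theorem~\ref{T:fred1}), kernel elements lie in $L^2_{\delta}$ for some $\delta > 0$, so they decay exponentially on the end; hence $\Pi_{\pm}$ is trace class with $\Tr\Pi_{\pm} = \dim\ker\D^{\pm}(\zp)$.

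Second, I observe that the correction term in \eqref{E:rtr} for $\sbtr(e^{-t\D^2})$ is
\[
\frac{N+1}{2\pi i}\oint_{|z|=1}\left(\Tr e^{-t\D^-_z\D^+_z} - \Tr e^{-t\D^+_z\D^-_z}\right)\frac{dz}{z},
\]
whose integrand is $\ind \D^+_z = 0$ by McKean--Singer on the closed manifold $X$ and the invertibility of $\D^+_z$. Hence the correction vanishes identically and $\sbtr(e^{-t\D^2}) = \lim_{N\to\infty}\int_{Z_N}\str(t,x)\,dx$. Splitting $e^{-t\D^{\mp}\D^{\pm}} = \Pi_{\pm} + R_{\pm}(t)$ via the spectral theorem, with $R_{\pm}(t)$ supported on spectrum in $[c^2,\infty)$, linearity gives
\[
\sbtr(e^{-t\D^2}) = \Tr(\Pi_+ - \Pi_-) + \lim_{N\to\infty}\int_{Z_N}\left(\tr K_{R_+(t)}(x,x) - \tr K_{R_-(t)}(x,x)\right)dx,
\]
with $\Tr(\Pi_+ - \Pi_-) = \ind \D^+(\zp)$. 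It remains to show the remainder term tends to zero as $t\to\infty$. On $\tilde X$ the analog of $R_{\pm}(t)$ is simply $e^{-t\D^{\mp}\D^{\pm}}(\tilde X)$, and plugging the uniform spectral gap into \eqref{E:kkz} yields $|\tilde K(t;x,x)| \le C e^{-c^2 t}$. The same exponential bound holds for $K_{R_{\pm}(t)}(x,x)$ on $\zp$ by functional calculus together with the Gaussian kernel bounds of Section~\ref{S:estimates}. Combined with the Gaussian $k$-decay of Corollary~\ref{C:diff} used in the proof of Lemma~\ref{L:btr}, this makes the series defining the regularized trace of the remainder absolutely convergent with a sum bounded by $C_0 e^{-c^2 t}$, uniformly in the partial-sum index, giving the desired vanishing.

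The main obstacle is turning the operator-norm spectral-gap estimate $\|R_{\pm}(t)\| \le e^{-c^2 t}$ into pointwise kernel estimates strong enough to survive the regularization procedure defining $\btr$: one must control the rate at which $K_{R_{\pm}(t)}(x,x)$ on $\zp$ approaches the periodic model $\tilde K(t;x,x)$ in the end, uniformly in large $t$. This is precisely where the periodic heat-kernel analysis collected in Section~\ref{S:estimates} is essential, and constitutes the technical heart of the argument.
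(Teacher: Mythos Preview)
Your strategy is the paper's: split off the finite-rank projections $\Pi_\pm$ onto $\ker\D^\pm(\zp)$, use the spectral gap to get pointwise exponential decay of the remainder kernels (this is exactly Proposition~\ref{P:mu}), and show the regularized trace of the remainder tends to zero. Your observation that the supertrace correction term in \eqref{E:rtr} vanishes identically---because $\Tr e^{-t\D^-_z\D^+_z} - \Tr e^{-t\D^+_z\D^-_z} = \ind\D^+_z = 0$ on closed $X$---is a pleasant simplification the paper does not exploit (it proves each half $\btr(e^{-t\D^\mp\D^\pm}) \to \dim\ker\D^\pm$ separately and therefore needs Proposition~\ref{P:Ktilde} to handle the correction). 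But this shortcut does not touch the main estimate.

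There is a genuine gap in your last step. You invoke Corollary~\ref{C:diff} for the Gaussian $k$-decay of $|K(t;x+k,x+k)-\tilde K(t;x+k,x+k)|$, but that corollary is valid only on bounded intervals $t\in(0,T]$. For $t\to\infty$ the relevant estimate is Proposition~\ref{P:diff}, which carries an extra factor $e^{\alpha t}$. Summing that bound over $k$ gives, up to constants, $e^{\alpha t}\sum_{k\ge 0} e^{-\gamma k/t} \sim (t/\gamma)\,e^{\alpha t}$, which diverges; so your claimed uniform bound $C_0\,e^{-c^2 t}$ on the series does not follow from the ingredients you cite. The paper handles this by splitting the sum over $k$ at the $t$-dependent index $(\alpha+\mu)t^2/\gamma$: for small $k$ one uses the uniform pointwise bounds $|K_0|,|\tilde K|\le C e^{-\mu t}$ from Propositions~\ref{P:mu} and~\ref{P:Ktilde}, absorbing the polynomially many terms into the exponential; for large $k$ the exponent in $e^{\alpha t}e^{-\gamma k/t}$ becomes sufficiently negative (and one separately uses Proposition~\ref{P:KP+} to dispose of the projection kernel). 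Carrying out this balancing is precisely the ``technical heart'' you allude to, and it is not a consequence of Corollary~\ref{C:diff}.
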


\begin{proof} 
We will only show that $\lim \btr (e^{-t \D^-\D^+}) = \dim\ker \D^+ (\zp)$ 
since the proof of the statement with the roles of $\D^-$ and $\D^+$ 
reversed is identical. Let $K_0(t;x,y) = K(t;x,y) - K_{P_+} (t;x,y)$ as 
in Section \ref{S:long-time}, where $P_+$ is the projector onto $\ker 
\D^+(\zp)$. Write
\begin{multline}\notag
\lim_{t \to \infty} \left(\btr (e^{-t\D^-\D^+}) - \dim \ker \D^+(\zp)\right) \\
= \lim_{t \to \infty} \lim_{N \to \infty} \left(\int_{Z_N} \tr (K_0(t;x,x))\,dx - 
(N+1) \int_{W_0} \tr (\tilde K(t;x,x))\,dx\right) \\
= \lim_{t \to \infty}\; \left(\;\sum_{k = 0}^{\infty}\; \int_{W_0} 
\tr\,(K_0 (t;x+k,x+k) - \tilde K(t;x+k,x+k))\,dx\right) \\
+ \lim_{t \to \infty}\; \int_Z \tr (K_0 (t;x,x))\,dx
\end{multline}

\medskip\noindent
as a sum of two limits, by breaking $Z_N$ into $Z$ and $N+1$ copies of $W_0$. It is immediate from Proposition \ref{P:mu} that the latter limit vanishes. As for the former limit, we have the following two estimates\,:
\begin{multline}\notag
|K_0 (t;x+k,x+k) - \tilde K (t;x+k,x+k)| \\
\le\; |K (t;x+k,x+k) - \tilde K (t;x+k,x+k)| + |K_{P_+} (t;x+k,x+k)| \\
\le\; C_1\, e^{\,\alpha t}\,e^{-\gamma\,k/t} + C_2\,e^{-\delta k}
\end{multline}
by Proposition \ref{P:diff} and Proposition \ref{P:KP+}, and (assuming without
loss of generality that $t \ge 1$)
\begin{multline}\notag
|K_0 (t;x+k,x+k) - \tilde K (t;x+k,x+k)| \\
\le \; |K_0 (t;x+k,x+k)| + |\tilde K (t;x+k,x+k)|\; \le\; C e^{-\mu t} 
\end{multline}
by Proposition \ref{P:mu} and Proposition \ref{P:Ktilde}. We will use the 
latter estimate for the terms in the series
\[
\sum _{k=0}^{\infty}\; |K_0 (t;x+k,x+k) - \tilde K (t;x+k,x+k)|
\]
with $k \le (\alpha + \mu)\,t^2/\gamma$, and the former for the terms 
with $k > (\alpha + \mu)\,t^2/\gamma$. The series is then bounded from
above by

\[
\sum_{k \le (\alpha + \mu)t^2/\gamma}\,C\,e^{-\mu t}\;\le\;
C\,(\alpha+\mu)\,t^2\, e^{-\mu t}/\gamma
\]
plus
\begin{multline}\notag
\sum_{k > (\alpha + \mu)\,t^2/\gamma} C_1\,e^{\,\alpha t}e^{-\gamma k/t}\;\le\;
C_1\,\sum_{\ell=0}^{\infty}\;e^{\,\alpha t -\gamma((\alpha +\mu)\,t^2/\gamma + \ell)/t} \\
\;\le\;C_1\,e^{-\mu t}\,\sum_{\ell=0}^{\infty}\;e^{-\gamma\ell/t}\;=\; 
\frac {C_1\,e^{-\mu t}}{1 - e^{-\gamma/t}}
\end{multline}
and plus
\[
\sum_{k > (\alpha + \mu)\,t^2/\gamma} C_2\,e^{-\delta k}\;\le\;
C_2\,\sum_{\ell=0}^{\infty}\;e^{-\delta((\alpha+\mu)\,t^2/\gamma + \ell)}\;=\;
\frac {C_2\,e^{-\delta(\alpha+\mu)\,t^2/\gamma}}{1 - e^{-\delta}}. 
\]

\bigskip\noindent
It is an easy calculus exercise to show that all of the above three terms limit to zero as $t \to \infty$, which completes the proof. 
\end{proof}

\medskip


\subsection{The first term on the right}
Integrating the first term on the right hand side of \eqref{E:comm} with respect to $t \in (0,\infty)$, we obtain
\medskip
\begin{gather}
\lim_{t\to\infty}\;\frac 1 {2\pi i}\;\oint_{|z|=1}\left(
\int_{W_0} f\cdot\tr \left(K_{\,\exp(-t \D^-_z \D^+_z)} - 
K_{\,\exp(-t \D^+_z \D^-_z)}\right)\,dx\right)\,\frac{dz} z \notag \\
- \lim_{t\to 0+}\;\frac 1 {2\pi i}\;\oint_{|z|=1}\left(
\int_{W_0} f\cdot\tr \left(K_{\,\exp(-t \D^-_z \D^+_z)} - 
K_{\,\exp(-t \D^+_z \D^-_z)}\right)\,dx\right)\,\frac{dz} z. \notag
\end{gather}

\medskip
As $t \to \infty$, each of the operators $\exp (-t \D^-_z \D^+_z)$ on $X$ converges to the orthogonal projection onto $\ker \D_z^+$. This projection is zero for all $z$ on the unit circle $|z| = 1$ because $\ker \D_z^+ = 0$ for all such $z$; see Proposition \ref{P:taubes}. Since $X$ is closed, we have the uniform convergence of smoothing kernels, $K_{\exp (-t \D^-_z \D^+_z)} (x,x) \to 0$; see for instance \cite[Lemma 1.2]{roe2}. Similarly, we have a uniform limit $K_{\exp (-t \D^+_z \D^-_z)} (x,x) \to 0$. This implies that the first limit in the above formula vanishes. 

Concerning the second limit, note that the operators $\D^{\pm}_z$ are Dirac operators twisted by the connection $-\ln z\;df$ in a complex line bundle $E_z$. Since we are assuming that $|z| = 1$, this is a unitary connection hence we have (on the closed manifold $X$)
\[
\lim_{t\to 0+}\,\tr \left(K_{\,\exp(-t \D^-_z \D^+_z)} - K_{\,\exp(-t \D^+_z \D^-_z)}
\right) = \ii(\D^+(X)) \ch (E_z),
\]
which in fact equals simply $\ii(\D^+(X))$ because the line bundles $E_z$ 
are flat. In particular, the integrand in the second limit is independent 
of $z$ so the $z$--integration results in 
\begin{equation}\label{E:fA}
\int_{W_0} f\cdot \ii(\D^+(X)).
\end{equation}
Since
\[
\int_X \ii(\D^+(X)) = \ind \D^+ (X) = 0
\]
by Corollary \ref{C:fred}, the form $\ii(\D^+(X))$ is exact. Choose a differential form $\omega$ on $X$ such that 
\[
d\omega = \ii(\D^+(X)).
\]
Recall that $f: W_0 \to \R$ is a function on $W_0$ but not on $X$. Denote the two boundary components of $W_0$ by $\p_- W_0 = Y_0$ and $\p_+ W_0 = Y_1$ (of course, $Y_0 = -Y$ and $Y_1 = Y$) and observe that $f|_{Y_1} = f|_{Y_0} + 1$. Apply Stokes' Theorem to \eqref{E:fA} to obtain 
\begin{multline}\notag
\int_{W_0} f\cdot \ii(\D^+(X)) = \int_{W_0} f\cdot d\omega
= \int_{Y_1} f\omega - \int_{Y_0} f\omega - \int_{W_0} df\wedge\omega \\
= \int_Y \left(f|_{Y_1}- f|_{Y_0}\right)\omega - \int_X df\wedge\omega =
\int_Y\omega - \int_X df\wedge\omega.
\end{multline}

\begin{remark}\label{R:product}
Suppose that $Y$ has a product neighborhood in $X$ with a product metric,
and that $df$ is supported in that neighborhood. Then one can easily check 
that 
\[
\int_Y\omega - \int_X df\wedge\omega = 0.
\]
\end{remark}

\smallskip


\subsection{The second term on the right}
Integrate the second term on the right hand side of \eqref{E:comm} with respect to $t \in (0,\infty)$ to obtain
\smallskip
\begin{equation}\label{E:eta}
- \frac 1 {2\pi i}\,\int_0^{\infty}\oint_{|z| = 1}\;
\Tr \left(df\cdot \D^+_z \exp (-t \D^-_z \D^+_z)\right)\,\frac {dz} z\,dt,
\end{equation}

\medskip\noindent
which equals negative one half times the $\eta$--invariant $\eta(X)$ defined 
in \eqref{E:etaintro}. This completes the proof of Theorem \ref{T:aps}. 

\bigskip


\section{The periodic $\eta$--invariant}\label{S:end-eta}
In this section, we will try to get a clearer idea of what the periodic $\eta$--invariant \eqref{E:etaintro} represents, and how it relates to the classical $\eta$--invariant of Atiyah, Patodi and Singer. 


\subsection{A spectral interpretation}
The classical $\eta$--invariant is a spectral invariant; we will obtain a similar, if not as explicit, formula for the periodic $\eta$--invariant. We will continue to assume that the $L^2$--closure of $\D^+(\zp)$ is Fredholm or, equivalently, that the operators $\D_z^{\pm} = \D^{\pm} (X) - \ln z\cdot df$ are invertible when $|z| = 1$. 

Theorem 4.6 in \cite{MRS} states that the family $\D^+_z$ is meromorphic in the variable $z \in \mathbb C^*$, as is the family $\D^-_z$. The poles $z_k$ of the family $\D^+_z$, which match the poles of the family $\D^-_z$, form the spectral set of $\D^{\pm}_z$; see Section \ref{S:fred}. Note that the analysis in \cite{MRS} was only done for spin Dirac operators in dimension four but it readily extends to the situation at hand.

We wish to relate \eqref{E:etaintro} to the spectral set of $\D_z^{\pm}$. To this end, observe that 
\[
- \frac d {dt}\;\left(df\cdot (\D_z^-)^{-1} e^{-t \D_z^- \D_z^+}\right)\; =\; 
df\cdot \D_z^+\, e^{-t \D_z^- \D_z^+}.
\]
Since $X$ is compact, the kernel of $df\cdot (\D_z^-)^{-1}\,e^{-t \D_z^-\D_z^+}$ converges uniformly to zero as $t \to \infty$ as long as $|z| = 1$, hence we can write
\begin{multline}\notag
\eta(X)\;=\;\frac 1 {\pi i}\,\int_0^{\infty}\;\oint_{|z|=1}\;\Tr(df\cdot \D_z^+ 
e^{-t \D_z^- \D_z^+})\,\frac {dz}z\,dt \\ =\; 
\lim_{t \to 0}\;\frac 1 {\pi i}\,\oint_{|z|=1}\;\Tr(df\cdot (\D_z^-)^{-1} 
e^{-t \D_z^- \D_z^+})\,\frac{dz} z .
\end{multline}

\smallskip\noindent
As we explain next, after passing to the limit as $t \to 0$ \emph{under} the integral, one can make the right hand side of this formula into a series with summation over the spectral set of $\D^{\pm}_z$. The invariant $\eta(X)$ can then be viewed as a regularization of this (divergent) series. 

A direct calculation shows that, for any $z \in \C^*$ away from the spectral set of $\D^{\pm}_z$, we have 
\begin{equation}\label{E:K}
df \cdot (\D^-_z)^{-1}\; =\; K\cdot (I - \ln z\cdot K)^{-1},\quad\text{where $K = df\cdot \D^- (X)^{-1}$}.
\end{equation}
The operator $K: L^2(X;\Sm^+) \to L^2(X;\Sm^+)$ is compact because both operators $df: L^2 \to L^2$ and $\D^-(X)^{-1}: L^2 \to L^2_1$ are bounded so that their composition factors through the compact embedding $L^2_1 \to L^2$. As a compact operator, $K$ has a discrete spectrum and a basis of generalized eigenspinors. 

Write $\ln z_k = \mu_k + 2\pi in$, $n \in \Z$, for a choice of branch of $\ln z$, then the eigenvalues of $K$ are of the form $\lambda_{k,n} = 1/(\mu_k + 2\pi in)$. It follows from \eqref{E:K} that, for every $z = e^{is}$, $s \in \mathbb R$, the operator $df \cdot (\D^-_z)^{-1}$ restricted to the generalized eigenspace of $K$ corresponding to $\lambda_{k,n}$ is isomorphic to a sum of Jordan cells with $1/(\mu_k + 2\pi in - is)$ along the diagonal. Also note that, for any given $k$, the generalized eigenspaces of $K$ corresponding to $\lambda_{k,n}$ for different $n$ are isomorphic to each other. These (finite dimensional) spaces will be denoted by $V_k$.

 Since $L^2 (X;\Sm^+)$ is a sum of the generalized eigenspaces of $K$, one can formally write 
\medskip
\[
\Tr\left(df\cdot (\D_z^-)^{-1}\right)\,=\,\sum_k\;\sum_n\;\frac 1 {\mu_k - i\,(s - 2\pi n)}\;\, \dim V_k.
\]

\medskip\noindent
Integrate the right hand side of this formula with respect to $s \in [0,2\pi]$ and convert the summation over $n \in \mathbb Z$ into an improper integral to obtain
\medskip
\[
\sum_k\,\left(\;\frac 1 {\pi}\;\int_{-\infty}^{\infty}\;\frac 1 {\mu_k - is}\;ds\right)\,\dim V_k\,.
\]

\smallskip\noindent
The summation here extends over the points $z_k$ in the spectral set of the family $\D_z^{\pm}$. Since
\[
\frac 1 {\pi}\;\int_{-\infty}^{\infty}\;\frac {ds} {\mu_k - is}\; =\;\sign \left(\Re\,\mu_k\right),
\]

\medskip\noindent
we arrive at the promised interpretation of $\eta(X)$ as a regularization of the divergent series
\smallskip
\begin{equation}\label{E:reg}
\sum_k\;\sign\,\ln\, |z_k|\,\cdot\,\dim V_k.
\end{equation}

This series can be viewed as a `spectral asymmetry' of the family $\D_z^+$ with respect to the unit circle $|z| = 1$, that is, informally, the number of spectral points with $|z| > 1$ minus the number of spectral points with $|z| < 1$. 

\begin{remark}
We used the notation $d(z_k)$ in \cite[Section 6.3]{MRS} for the dimension of the solution space of the system \eqref{E:dz} responsible for the asymptotic behavior of the kernel of the spin Dirac operator $\D^+(\zp)$ over the end. It is a straightforward linear algebra exercise to show that $d(z_k) = \dim V_k$.
\end{remark}


\subsection{$\eta$-invariant and von Neumann trace}
Here is another interpretation of the periodic $\eta$--invariant using the trace in the von Neumann algebra of bounded $L^2$--operators on $\tilde X$.

\begin{proposition}
Let $\D^+$ and $\D^-$ be periodic Dirac operators on $\tilde X$, and $\tau$ 
the von Neumann trace on $\tilde X$, then 
\[
\eta(X)\; =\; 2\int_0^{\infty} \tau \left(df\cdot\D^+ \exp(-t\D^-\D^+)\right)\,dt.
\]
\end{proposition}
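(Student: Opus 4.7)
The plan is to unravel the definition of the von Neumann trace $\tau$ for periodic operators on $\tilde X$ and match it against the contour integral appearing in \eqref{E:etaintro}. Recall that for a bounded $L^2$--operator $A$ on $\tilde X$ that commutes with the $\Z$--action of covering translations and that is represented by a continuous Schwartz kernel $K_A(x,y)$, the von Neumann trace is given by integrating the pointwise trace of the kernel over a single fundamental domain,
\[
\tau(A) \;=\; \int_{W_0} \tr\bigl(K_A(x,x)\bigr)\,dx.
\]
Since the smoothing kernel $K_A$ is $\Z$--periodic along the diagonal (compare \eqref{E:per}), this integral is independent of the choice of fundamental domain.

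The key step is then to apply the decomposition \eqref{E:kkz} to the periodic operator $A = \D^+\exp(-t\D^-\D^+)$ on $\tilde X$. By the same Fourier--Laplace argument that produced \eqref{E:kkz}, its kernel $K_A(x,y)$ is expressed in terms of the kernels $K_{A_z}(x_0,y_0)$ of the holomorphic family $A_z = \D^+_z\exp(-t\D^-_z\D^+_z)$ on the closed manifold $X$; restricting to the diagonal $x=y$ simply gives
\[
K_A(x,x) \;=\; \frac 1{2\pi i}\oint_{|z|=1} K_{A_z}(x_0,x_0)\,\frac{dz}{z}.
\]
Since $df$ descends to a $1$--form on $X$, multiplication by $df$ is equivariant under covering translations, so
\[
\tau(df\cdot A) \;=\; \int_{W_0} \tr\bigl(df(x)\,K_A(x,x)\bigr)\,dx.
\]
Substituting the previous display, interchanging the integral over $W_0$ with the contour integral, and using that $p\colon W_0\to X$ is a bijection away from a measure--zero set, the inner integral becomes $\int_X \tr\bigl(df(x_0)\,K_{A_z}(x_0,x_0)\bigr)\,dx_0 = \Tr\bigl(df\cdot \D^+_z\exp(-t\D^-_z\D^+_z)\bigr)$ on the closed manifold $X$. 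Hence
\[
\tau\bigl(df\cdot \D^+\exp(-t\D^-\D^+)\bigr) \;=\; \frac 1{2\pi i}\oint_{|z|=1}\Tr\bigl(df\cdot \D^+_z\exp(-t\D^-_z\D^+_z)\bigr)\,\frac{dz}{z}.
\]
Integrating in $t\in(0,\infty)$ and multiplying by $2$ reproduces the formula \eqref{E:etaintro} for $\eta(X)$.

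The main technical point is to justify the two interchanges of integration (Fubini between $W_0$ and $|z|=1$, and between $t$ and the other variables), together with the fact that $df\cdot A$ is of trace class in the von Neumann sense so that $\tau(df\cdot A)$ is even defined. The former uses the absolute convergence of the contour integrand, which holds because $X$ is closed and $A_z$ is smoothing with kernel depending continuously (in fact analytically) on $z$ along $|z|=1$ under the Fredholm hypothesis. The latter uses the Gaussian kernel estimates collected in Section~\ref{S:estimates}, which guarantee that $df\cdot \D^+\exp(-t\D^-\D^+)$ has a continuous, periodic kernel and hence a finite von Neumann trace for each $t>0$.
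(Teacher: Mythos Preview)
Your proof is correct and follows essentially the same route as the paper: use \eqref{E:kkz} on the diagonal to convert the von Neumann trace over a fundamental domain into the contour integral of the ordinary trace over $X$, then integrate in $t$ to recover \eqref{E:etaintro}. The only cosmetic difference is that the paper applies \eqref{E:kkz} directly to the operator $df\cdot \D^+\exp(-t\D^-\D^+)$ rather than to $\D^+\exp(-t\D^-\D^+)$ followed by multiplication by $df$; since $df$ descends to $X$ these are equivalent, and your added remarks on Fubini and trace-class issues are reasonable elaborations of points the paper leaves implicit.
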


\medskip
We use the following definition of von Neumann trace; see Atiyah \cite{atiyah}. Let $\tilde K (x,y)$ be the smoothing kernel of the operator $df\cdot\D^+ \exp(-t\D^-\D^+)$ on $\tilde X$. Then $\tr\,(\tilde K(x,x))$ is a periodic function; see \eqref{E:per}, hence it is lifted from a function on $X$; integrate the latter function over $X$ to get $\tau(df\cdot \D^+ \exp (-t \D^-\D^+)$.

\begin{proof}
Integrate the equation \eqref{E:kkz} with $x = y\,\in W_0$ to obtain
\[
\int_{W_0} \tilde K(x,x)\,dx\; =\; \frac 1 {2\pi i}\; \oint_{|z| = 1} \left(\int_X
K_z (x,x)\;dx\right)\,\frac {dz} z.
\]
If $\tilde K$ is the smoothing kernel of $df\cdot \D^+ \exp(-t \D^-\D^+)$ on 
$\tilde X$ then $K_z$ is the smoothing kernel of the operator $df \cdot 
\D_z^+ \exp(-t \D_z^-\D_z^+)$. Applying the matrix trace we obtain
\[
\int_{W_0} \tr\tilde K(x,x)\,dx\;=\;\frac 1 {2\pi i}\;\oint_{|z| = 1} \Tr (df
\cdot \D^+_z \exp(-t \D^-_z\D^+_z))\, \frac {dz} z.
\]
Integration with respect to $t$ completes the proof.
\end{proof}


\subsection{The product case}\label{S:product}
Let $X = S^1 \times Y$ with product metric and orientation, and choose $f = \theta$. Then $\D^+(X) = d\theta\cdot(\p/\p \theta - \D(Y))$ hence, if we write $z = e^{is}$,\, $0 \le s \le 2\pi$, we have
\[
\D_z^+ = d\theta\cdot(\p/\p\theta - \D - is)\quad\text{and}\quad
\D_z^- = (\p/\p\theta + \D - is)\,d\theta,
\]
where $\D = \D(Y)$ is the self-adjoint Dirac operator on $Y$. The spectral points of $\D^{\pm}_z$ are easy to compute: they are of the form $z = e^{\lambda}$, where $\lambda$ is an eigenvalue of $\D$. To calculate the trace $\Tr \left(df\cdot \D^+_z\,e^{-t \D_z^-\D_z^+}\right)$, we will take advantage of the basis of eigenspinors $\psi_{n,\lambda} = e^{2\pi i n\theta}\varphi_{\lambda}$, where $n$ is an arbitrary integer and $\D\varphi_{\lambda} = \lambda \varphi_{\lambda}$ (note that $\lambda \ne 0$ since we assume $\ker \D = 0$). A direct calculation in this basis gives
\[
\Tr(df\cdot \D_z^+ e^{-t \D_z^- \D_z^+}) = 
\sum_{n,\lambda}\;(\lambda + i (s - 2\pi n))\;e^{-t(\lambda^2 + (s - 2\pi n)^2)}
\]
and 
\begin{multline}\notag
\frac 1{\pi i}\,\oint_{|z|=1}\Tr(df\cdot\D_z^+ e^{-t \D_z^-\D_z^+})\,\frac {dz} z \\
= \frac 1 {\pi}\,\int_0^{2\pi}\;\Tr(df\cdot \D_z^+ e^{-t \D_z^- \D_z^+})\,ds \\ =
\frac 1 {\pi}\,\sum_{\lambda}\;\int_{-\infty}^{\infty} (\lambda + is)\;
e^{-t(\lambda^2 + s^2)}\,ds,
\end{multline}
where we incorporated the summation over $n$ into the improper integral. Next,
use the fact that $s\,e^{-t(\lambda^2 + s^2)}$ is an odd function in $s$ to obtain, after some basic integration,
\smallskip
\[
\frac 1 {\pi i}\,\oint_{|z|=1} \Tr(df\cdot \D_z^+ e^{-t \D_z^- \D_z^+})\,
\frac {dz} z \;=\; \frac 1 {\sqrt{\,\pi t}}\,\sum_{\lambda}\;\lambda\,
e^{-t \lambda^2}.
\]

\smallskip\noindent
Integration with respect to $t$ results in
\smallskip
\[
\eta(X) = \frac 1 {\sqrt{\,\pi}}\;\int_0^{\infty}\;t^{-1/2}\,\left(\sum_{\lambda}\;
\lambda\,e^{-t\lambda^2}\right)\,dt,
\]

\smallskip\noindent
which gives a well-known formula for the $\eta$--invariant of $\D$,
\smallskip
\[
\eta_{\,\D} (Y) = \frac 1 {\sqrt{\,\pi}}\;\int_0^{\infty}\;t^{-1/2}\,
\Tr \left(\D\,e^{-t\D^2}\right)\,dt;
\]

\smallskip\noindent
see for instance \cite{melrose:aps}. Therefore, $\eta (X) = \eta_{\,\D}(Y)$. That this matches the $\eta$-invariant of \eqref{E:apsindex} can be proved as in \cite{aps:I} using the Mellin transform.


\subsection{Non-product examples}\label{S:inoue}
In the non-product case, direct computations of spectral sets and periodic $\eta$--invariants are very difficult. In this section, we will obtain partial information about the spectral set of Dirac operators for a class of examples called Inoue surfaces. These are compact complex surfaces which belong to type $\rm{VII}_{\,0}$ in Kodaira's classification. They were constructed by Inoue \cite{inoue}, and their most remarkable property is that they do not admit any holomorphic curves. 

We will study some of the simplest Inoue surfaces, those of class $S_M$. These are compact quotients of $\H \times \C$, where $\H = \{\,w = w_1 + i w_2\in \C\;|\;w_2 > 0\,\}$ is the upper half-plane. To construct $X$, start with a matrix $M = (m_{ij}) \in SL(3,\Z)$ with one real eigenvalue $\alpha > 1$ (easily seen to be irrational) and two complex conjugate eigenvalues $\beta \neq \bar\beta$. Let $a = (a_1,a_2,a_3)$ be a real eigenvector corresponding to $\alpha$, and $b = (b_1,b_2,b_3)$ a complex eigenvector corresponding to $\beta$. Let $G_M$ be the group of complex analytic transformations of $\H \times \C$ generated by
\begin{gather}
g_0 (w,z) = (\alpha w, \beta z), \notag \\
g_i (w,z) = (w + a_i,z + b_i),\quad i = 1, 2, 3. \notag
\end{gather}
The group $G_M$ acts on $\H \times \C$ freely and properly discontinuously so that the quotient $X = (\H \times \C)/G_M$ is a compact complex surface. 

Inoue \cite{inoue} showed that $X$ is smoothly a 3-torus bundle over a circle whose monodromy is given by the matrix $M$, and that $b_1 (X) = 1$ and $b_2 (X) = 0$. Define a function $f: \H \times \C \to \R$ by the formula $f(w,z) = \ln w_2/\ln \alpha$. One can easily see that $df$ is a well defined 1-form on $X$ whose cohomology class generates $H^1 (X;\Z) = \Z$.

The surface $X$ admits no global K{\"a}hler metric. We will however consider the following Hermitian metric on $\H \times \C$, called the Tricerri metric~\cite{tricerri:lck,dragomir-ornea:lck},
\[
g\; =\; \frac{dw\otimes d\bar w}{w_2^2}\; +\; w_2\;dz\otimes d\bar z.
\]
For the K{\"a}hler form $\omega$ of this metric on $\H \times \C$, we find $d\omega = d\ln w_2 \wedge \omega$ with the exact torsion form $d\ln w_2 = \ln \alpha\; df$. The metric $g$ is $G_M$--invariant, and so it defines a metric on $X$ which makes $X$ into a locally conformal K{\"a}hler manifold.

As a complex surface, $X$ admits a canonical $\spinc$ structure with respect to which 
\[
\Sm^+ = \Lambda^{0,0} (X) \,\oplus\,\Lambda^{0,2}(X)\quad\text{and}\quad \Sm^- = \Lambda^{0,1} (X).
\]
Let $\D^{\pm}(X)$ be the Dirac operators associated with the Tricerri metric and the canonical $\spinc$ structure on $X$. According to Gauduchon \cite[page 283]{G}, there is an isomorphism 
\begin{equation}\label{E:gaud}
\D^- (X) + \frac 1 4\, \ln\alpha\cdot df\;=\; \sqrt{\,2}\,(\bar\p\,\oplus\,\bar\p^*),
\end{equation}
where
\begin{equation}\label{E:dd}
\bar\p\,\oplus\,\bar\p^*: \;\Omega^{0,1} (X) \longrightarrow \Omega^{0,2}(X)\,\oplus\,\Omega^{0,0}(X)
\end{equation}
is the Dirac--Dolbeault operator on the complex surface $X$. This identity implies that the spectral set of $\D^{\pm}(X)$ is obtained from that of $\bar\p\,\oplus\,\bar\p^*$ via multiplication by $\alpha^{-1/4}$. 

One can check directly that the sections $d\bar w/w_2$ and $d\bar z$ give rise to the spectral points $z = 1$ and $z = \alpha\beta$ of the Dirac--Dolbeault operator, and hence to the spectral points $\alpha^{-1/4}$ and $\alpha^{\,3/4}\beta$ of the operators $\D^{\pm}(X)$. We see in particular that, unlike in the product case, spectral points need not be real. 

The two spectral points we found above lie on the boundary of the annulus $\alpha^{-1/4} \le |z| \le \alpha^{1/4}$. We can prove that there are no other spectral points inside this annulus via a much more involved argument, which uses Fourier analysis on $X$ viewed as a $3$-torus bundle over a circle to reduce the calculation to solving a Sturm--Liouville problem on the real line. We plan to give the details of this argument in a subsequent paper.

The infinite cyclic cover of $X$ is a product $\R \times T^3$ topologically even though it is not metrically. Manifolds with periodic ends that are not products even topologically have recently appeared in our paper~\cite{mrowka-ruberman-saveliev:derham}, which studied the index of the de Rham complex; the examples there include manifolds whose ends arise from the infinite cyclic covers of 2-knot exteriors in the 4-sphere. 


\section{Non-Fredholm case}\label{S:non}
In this section, we will extend Theorem \ref{T:aps} to the case when the $L^2$--closure of the operator $\D^+(\zp)$ is not necessarily Fredholm. This extension will correspond, in the product end case, to the general case of the Atiyah--Patodi--Singer index theorem as stated in \eqref{E:apsindex}, without the assumption that $\ker B = 0$.


\subsection{Fredholm theory}
We begin by reviewing some material from our paper  \cite{MRS} regarding the family $\D^+_z (X) = \D^+(X) - \ln z\cdot df$. In \cite{MRS}, we only dealt with spin Dirac operators in dimension four, however, all of the results easily extend to cover the more general case at hand. We will restrict ourselves to stating a few relevant results, and refer to \cite{MRS} for proofs. 

Let us assume that the spectral set of the family $\D^+_z (X)$ is discrete, which will be the case, for example, if the conditions of Theorem \ref{T:fred1} are fulfilled. Since we no longer assume that the $L^2$--closure of the operator $\D^+(\zp)$ is Fredholm, some of the spectral points may land on the unit circle $|z| = 1$. 

For any choice of $\delta \in \R$ that makes the operator \eqref{E:sobolev} Fredholm, denote the index of \eqref{E:sobolev} by $\ind_{\delta} \D^+(\zp)$. Given two choices $\delta < \delta'$, we have the following change of index formula 
\begin{equation}\label{E:change}
\ind_{\,\delta} \D^+(\zp) - \ind_{\,\delta'} \D^+(\zp)\; =\; 
\sum_{e^{\delta}< |z| <\, e^{\delta'}}\; d(z);
\end{equation}
see formula (20) of \cite{MRS}. Here, the integer $d(z)$ is defined as in \cite[Section 6.3]{MRS} to be the dimension of the space of solutions $(\phi_1,\ldots,\phi_m)$ of the system
\begin{equation}\label{E:dz}
\begin{cases}
\; D^+_z (X)\, \phi_1 = df\cdot\phi_2, \\
\; \cdots \\
\; D^+_z (X)\, \phi_{m-1} = df\cdot\phi_m, \\ 
\; D^+_z (X)\, \phi_m = 0.
\end{cases}
\end{equation}
Equivalently, $d(z)$ is the number of linearly independent vectors in the kernel of the operator $\D^+ (\tilde X)$ that have the form
\medskip
\begin{equation}\label{E:asymp}
z^{-f(x)}\;\sum_{p=1}^m\;(-1)^{p-1} f(x)^{p-1}\,\phi_p (x)/(p-1)!
\end{equation}

\smallskip


\subsection{Statement of the theorem}\label{S:thm-stat}
We will say that $\ep \in \mathbb R$ is \emph{small} if the only complex numbers $z$ in the annulus $e^{-2|\ep|} < |z| < e^{2|\ep|}$ for which $\D^+_z (X)$ is non-invertible are those with $|z| = 1$. Formula \eqref{E:change} with $\delta = \ep$ then implies that the index $\ind_{\ep}\D^+(\zp)$ is independent of $\ep$ as long as $\ep$ is small and stays on the same side of zero. We denote this index by $\ind_{\pm} \D^+(\zp)$ according to whether $\ep$ is positive or negative.

Our extension of Theorem \ref{T:aps} will give a formula for $\ind_+\D^+(\zp)$. In order to state it, we need to introduce two new quantities. First, let
\begin{equation}\label{E:h}
h\;=\;\sum_{|z|=1}\;d(z),
\end{equation}
where $d(z)$ are defined by \eqref{E:dz}. Equivalently, $h$ is the term on the right hand side of the change of index formula \eqref{E:change} with small negative $\delta$ and small positive $\delta'$. This second definition implies, in particular, that $h$ is independent of the choice of $f$.

The integer \eqref{E:h} will play the role of $h_B = \dim\ker B$ of the Atiyah--Patodi--Singer theorem \eqref{E:apsindex} in the product case. To be precise, if $X = S^1 \times Y$ with product metric and $f = \theta$ then $\D^+ (X) = d\theta \cdot (\p/\p\theta - \D(Y))$; see Section \ref{S:product}. A straightforward calculation using Fourier analysis then shows that $d(z) = 0$ for all $z \neq 1$ on the unit circle $|z| = 1$, and $h = d(1) = \dim \ker \D(Y)$. Second, let 
\medskip
\begin{equation}\label{E:eta-ep}
\eta_{\ep}(X) = \frac 1 {\pi i}\,\int_0^{\infty}\oint_{|z| = e^{\ep}}\;
\Tr \left(df\cdot \D^+_z \exp (-t (\D^+_z)^* \D^+_z)\right)\,\frac {dz} z\,dt,
\end{equation}

\medskip\noindent
where the integral is understood in a regularized sense: for small positive $t$, the integral 
\medskip
\[
\frac 1 {\pi i}\,\int_t^{\infty}\oint_{|z| = e^{\ep}}\;
\Tr \left(df\cdot \D^+_z \exp (-t (\D^+_z)^* \D^+_z)\right)\,\frac {dz} z\,dt,
\]

\medskip\noindent
has an asymptotic expansion in powers of $t$, and we let $\eta_{\ep}(X)$ equal the constant term in this expansion. Define
\smallskip
\begin{equation}\label{E:etapm}
\eta_{\pm} (X)\;=\; \lim_{\ep \to 0\pm}\,\eta_{\ep}(X)\quad\text{and}\quad 
\eta(X)\;=\;\frac 1 2\;(\eta_+(X) + \eta_-(X)).
\end{equation}

\smallskip\noindent
Since $(\D^+_z)^* = \D_z^-$ on the unit circle $|z| = 1$ this definition of $\eta\,(X)$ matches that in the Fredholm case. Similarly to \eqref{E:reg}, one can interpret $\eta\,(X)$ defined by \eqref{E:etapm} as a regularization of the series 
\[
\sum_{|z_k| \ne 1}\;\sign\, \ln |z_k|\cdot d(z_k).
\]
The equality $\eta(X) = \eta_{\D}(Y)$ proved in Section \ref{S:product} for product manifolds $X = S^1 \times Y$ continues to hold in the non-Fredholm case.

\begin{thm}\label{T:non}
Let $\D^+(\zp)$ be such that the spectral set of $\D^+_z (X)$ is a discrete subset of $\C^*$, and let $\omega$ be a form on $X$ such that $d\omega = \ii(\D^+(X))$. Then 
\[
\ind_+\D^+(\zp)\;=\;\int_Z\;\ii(\D^+(Z)) - \int_Y\;\omega\, + 
\int_X\;df\wedge \omega\, -\, \frac {h + \eta\,(X)}{2}.
\]
\end{thm}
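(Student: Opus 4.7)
The plan is to reduce Theorem~\ref{T:non} to the Fredholm case already handled by Theorem~\ref{T:aps} via a perturbation argument, paralleling the strategy Atiyah, Patodi, and Singer used in Section~3 of~\cite{aps:I} to pass from the invertible boundary-operator case to the general case. For a small real number $\ep$ in the sense of Section~\ref{S:thm-stat}, introduce the perturbed operator $\D_\ep^+ = \D^+(\zp) - \ep\,df$ (with $df$ acting by Clifford multiplication). Conjugation by multiplication by $e^{-\ep f}$ intertwines $\D_\ep^+$ on unweighted Sobolev spaces with $\D^+(\zp)$ acting on the weighted spaces $L^2_{k+1,\ep} \to L^2_{k,\ep}$. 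After Fourier--Laplace transform, $\D_\ep^+$ corresponds to the family $(\D_\ep^+)_w(X) = \D^+_{we^\ep}(X)$, which by the smallness hypothesis is invertible for all $|w|=1$. Proposition~\ref{P:taubes} then ensures that $\D_\ep^+$ is $L^2$-Fredholm on $\zp$, with Fredholm index equal to $\ind_\ep \D^+(\zp) = \ind_+ \D^+(\zp)$.

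Applying Theorem~\ref{T:aps} to $\D_\ep^+$ now gives
\begin{equation*}
\ind_+ \D^+(\zp) \;=\; \int_Z \ii(\D_\ep^+(Z)) \;-\; \int_Y \omega_\ep \;+\; \int_X df \wedge \omega_\ep \;-\; \tfrac{1}{2}\,\eta(\D_\ep^+(X)),
\end{equation*}
where $\omega_\ep$ is a smooth primitive of $\ii(\D_\ep^+(X))$. The change of variable $z = we^\ep$ in the contour integral defining $\eta(\D_\ep^+(X))$ identifies it with $\eta_\ep(X)$ from~\eqref{E:eta-ep}. Taking $\ep \to 0^+$, the local index forms depend continuously on $\ep$ (being universal polynomials in the curvature and in the perturbation parameter), so with $\omega_\ep$ chosen as a smooth family of primitives the three local integrals converge to their $\ep = 0$ counterparts; and by the definition~\eqref{E:etapm}, $\eta_\ep(X) \to \eta_+(X)$. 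The resulting limit identity reads
\begin{equation*}
\ind_+ \D^+(\zp) \;=\; \int_Z \ii(\D^+(Z)) \;-\; \int_Y \omega \;+\; \int_X df \wedge \omega \;-\; \tfrac{1}{2}\,\eta_+(X).
\end{equation*}

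To finish, I would run the same argument with small $\ep < 0$, obtaining the analogous identity with $\ind_- \D^+(\zp)$ on the left and $\eta_-(X)$ on the right, the local terms being the same in both cases. Subtracting and invoking the change-of-index formula~\eqref{E:change} with $\delta$ slightly negative and $\delta'$ slightly positive, which gives $\ind_- \D^+(\zp) - \ind_+ \D^+(\zp) = h$ by the definition of $h$, forces $\eta_+(X) - \eta_-(X) = 2h$. Combined with the symmetric definition $\eta(X) = \tfrac{1}{2}(\eta_+(X) + \eta_-(X))$, this pins down $\eta_+(X) = \eta(X) + h$, and substituting produces the claimed formula.

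The principal technical obstacle is to verify that $\eta_\ep(X)$ is well-defined and varies continuously with $\ep$ down to the limits $\ep \to 0^\pm$. This will require a small-time asymptotic expansion of $\Tr\bigl(df\cdot \D_z^+\exp(-t(\D_z^+)^*\D_z^+)\bigr)$ that is uniform for $z$ on compact arcs in a neighborhood of $|z|=1$, together with control of the contour integral as the spectral set of $\D_z^+$ approaches the contour of integration; this is the analog in our setting of the delicate regularization around zero eigenvalues of $B$ in the original Atiyah--Patodi--Singer framework, and is presumably where most of the real analytic work of Section~\ref{S:non} will be concentrated.
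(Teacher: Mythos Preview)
Your overall strategy---perturb to $\D_\ep^+ = \D^+ - \ep\,df$, obtain index formulas for small $\ep$ of each sign, and then average using the change-of-index formula---is exactly the paper's route. The gap is in the shortcut of invoking Theorem~\ref{T:aps} directly for $\D_\ep^+$. That theorem is stated and proved for chiral Dirac operators arising from a $\Z/2$-graded Dirac bundle, where in particular the full operator $\D$ is self-adjoint and $(\D^+_z)^* = \D^-_z$ on $|z|=1$. The perturbed operator $\D_\ep = \D - \ep\,df$ is \emph{not} self-adjoint: Clifford multiplication by a real $1$-form is skew, so $\D_\ep^* = \D_{-\ep}$ (equivalently, $\D_\ep$ is $\D$ twisted by a flat but non-unitary connection). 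Hence you cannot quote Theorem~\ref{T:aps} for $\D_\ep^+$, nor appeal to a local index form $\ii(\D_\ep^+)$ in the usual sense.

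The paper therefore reruns the heat-kernel argument of Sections~\ref{S:b-trace}--\ref{S:proof} with the correct adjoint $(\D^+_\ep)^* = \D^-_{-\ep}$ throughout, setting $\sbtr(\ep,t) = \btr(e^{-t\D^-_{-\ep}\D^+_\ep}) - \btr(e^{-t\D^+_\ep\D^-_{-\ep}})$. Two genuine modifications arise. First, the $t\to 0$ behavior of $\sbtr(\ep,t)$ is an asymptotic expansion in powers of $t$ rather than a finite limit; the constant term is $\int_Z \ii(\D^+(Z))$, and the index identity must be read in this regularized sense (it becomes an honest limit only as $\ep\to 0$). Second, the eta term that emerges from the commutator formula involves $(\D^+_z)^*\D^+_z$ in the exponent, not $\D^-_z\D^+_z$---these agree only on $|z|=1$---which is precisely why $\eta_\ep(X)$ is defined as in~\eqref{E:eta-ep}. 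So your proposed identification of ``$\eta(\D_\ep^+(X))$'' with $\eta_\ep(X)$ by a contour shift would not be correct even if the former were defined. Once these two points are handled, the rest of your argument (letting $\ep\to 0^\pm$, using $\ind_-\!-\ind_+ = h$, and averaging) matches the paper verbatim.
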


\smallskip


\subsection{Sketch of the proof}
Given $\ep\in \R$, consider the operators $\D_{\ep} = e^{\ep f}\,\D\,e^{-\ep f} = \D - \ep\,df$ on each of the manifolds $\zp$, $\tilde X$ and $X$, where $f$ stands for both the function $f: \tilde X \to \mathbb R$ and its extension to $\zp$. If $\ep$ is small, the operator $\D^+_{\ep}$ is Fredholm on $\zp$, and its index can be computed mainly as before. The few changes that arise are due to the fact that the full Dirac operator $\D_{\ep}$ is no longer self-adjoint. To be precise, we have $\D_{\ep}^* = \D_{-\ep}$ and $(\D^+_{\ep}\,)^* = \D^-_{-\ep}$ hence $\ind \D^+_{\ep}(\zp) = \dim\ker \D^+_{\ep}(\zp) - \dim\ker \D^-_{-\ep}(\zp)$. 

The proof now goes as follows. Define the regularized trace as in Section \ref{S:b-trace} and introduce the notation
\[
\sbtr(\ep,t)\; =\; \btr(\exp(-t\D^-_{-\ep}\D^+_{\ep})) - 
\btr(\exp(-t\D^+_{\ep}\D^-_{-\ep}))
\]
for operators on $\zp$. Essentially the same argument as in the proof of Proposition \ref{P:long} shows that
\[
\lim_{t\to\infty}\;\sbtr(\ep,t)\; =\;\ind_{\ep} \D^+(\zp).
\]
On the other hand, the limit 
\[
\lim_{t \to 0}\; \sbtr(\ep,t)\;=\;\int_Z\;\ii(\D^+(Z))
\]
of Proposition \ref{P:zero} now needs to be understood in a regularized sense, as the constant term in the asymptotic expansion of $\sbtr(\ep,t)$ in the powers of $t$. It turns into a true limit as $\ep \to 0$. A direct calculation with the easily verified formula $\exp\,(-t\D_{\ep}\D^*_{\ep})\allowbreak \D_{\ep} = \D_{\ep}\,\exp\,(-t\D^*_{\ep}\D_{\ep})$ shows that 
\medskip
\begin{equation}\label{E:ep-comm}
\frac d{dt}\;\sbtr(\ep,t)\;=\;
-\btr [\D^-_{-\ep},\D^+_{\ep}\exp(-t\D^-_{-\ep}\D^+_{\ep})].
\end{equation}

\smallskip\noindent
Repeat the argument of Section \ref{S:trace} to derive a commutator trace formula with $P = \D^-_{-\ep}$ and $Q = \D^+_{\ep}\exp(-t\D^-_{-\ep}\D^+_{\ep})$, and integrate \eqref{E:ep-comm} with respect to $t \in (0,\infty)$, in regularized sense. Passing to the limit as $\ep \to 0$, we arrive as in Section \ref{S:proof} at the formula 
\medskip
\begin{equation}\label{E:prelim}
\ind_{\pm}\D^+(\zp)\;=\;\int_Z\;\ii(\D^+(Z)) - \int_Y\;\omega + \int_X\;df\wedge \omega\,-\,\frac 1 2\,\eta_{\pm} (X).
\end{equation}

\medskip\noindent
Assume now that $\ep > 0$ then, according to the change of index formula \eqref{E:change}, 
\[
\ind_+ \D^+(\zp) - \ind_- \D^+(\zp)\;=\;-h,
\]
hence
\smallskip
\[
\frac 1 2\,\left(\ind_+\D^+(\zp) + \ind_-\D^+(\zp)\right)\;=\;\ind_+\D^+(\zp)\,+\, \frac 1 2\,h.
\]

\smallskip\noindent
Substituting \eqref{E:prelim} into the left hand side of this identity completes the proof. 


\subsection{Dependence of $\eta(X)$ on orientations}\label{S:orient}
The invariant $\eta(X)$ defined by \eqref{E:etaintro} and, in general, by \eqref{E:etapm}, depends on two choices of orientation: the orientation of $X$ itself, and the sign of the primitive cohomology class $\gamma \in H^1(X;\Z)$ associated with the infinite cyclic cover of $X$. In applications, it is useful to know how changing these two orientations affects $\eta(X)$.

\begin{proposition}\label{P:orient-hom} 
Let $X$ be an oriented compact manifold with a choice of primitive class $\gamma \in H^1(X; \Z)$, and $\eta(X)$ the periodic $\eta$--invariant of a Dirac operator $\D^+(X)$. Then negating $\gamma$ changes the sign of $\eta(X)$.
\end{proposition}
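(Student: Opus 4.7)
The plan is to change variables in the contour integral defining $\eta(X)$. Negating $\gamma$ amounts to replacing the lift $f : \tilde X \to \R$ by $-f$, so that the $1$-form $df$ on $X$ gets replaced by $-df$. Under this substitution the twisted operators transform as
\[
\D^{\pm}_z(X) \;=\; \D^{\pm}(X) - \ln z \cdot df \;\longmapsto\; \D^{\pm}(X) + \ln z \cdot df \;=\; \D^{\pm}_{1/z}(X),
\]
so in the integrand of \eqref{E:etaintro} every occurrence of $\D^{\pm}_z$ is replaced by $\D^{\pm}_{1/z}$, while the explicit factor $df$ picks up an overall sign.

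The key step is then the substitution $z \mapsto 1/z$ in the contour integral over $|z|=1$. One checks directly that $d(1/z)/(1/z) = -dz/z$, and that as $z$ traverses the unit circle counterclockwise, $1/z$ traverses it clockwise; the two sign flips cancel and the measure $dz/z$ is preserved. After this change of variable, the inner integrand becomes identical to the one in the original definition of $\eta(X)$, except for the overall sign from $df \mapsto -df$. Thus the new invariant equals $-\eta(X)$ in the Fredholm case.

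For the non-Fredholm case one has to run the same argument for each $\eta_\ep(X)$ of \eqref{E:eta-ep}. The substitution $z \mapsto 1/z$ now maps the contour $|z| = e^\ep$ to the contour $|z| = e^{-\ep}$, so the roles of $\eta_+(X)$ and $\eta_-(X)$ get swapped while each still picks up the overall sign from $df \mapsto -df$. Consequently
\[
\eta_{\pm}^{\text{new}}(X) \;=\; -\,\eta_{\mp}(X),
\]
and averaging gives $\eta^{\text{new}}(X) = -\eta(X)$ by \eqref{E:etapm}. One should also remark that the regularization (taking the constant term in the asymptotic expansion in $t$) commutes with these manipulations, since both the change of variable and the sign flip are independent of $t$.

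The only point that requires some care is the bookkeeping of orientations and signs: one must verify that the contour orientation, the $dz/z$ measure, the sign from $df$, and (in the non-Fredholm case) the swap of the contours $|z| = e^{\pm\ep}$ combine cleanly. No genuine analytic difficulty arises, because all of the operators and kernels involved transform in an explicit algebraic way under the substitution, so the proof reduces to a careful but routine change of variables in the formulas \eqref{E:etaintro} and \eqref{E:eta-ep}.
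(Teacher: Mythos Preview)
Your proof is correct and follows exactly the same approach as the paper: replace $f$ by $-f$, observe that $\D^{\pm}_z$ becomes $\D^{\pm}_{1/z}$, and make the change of variable $w=1/z$ in the contour integral \eqref{E:eta-ep}, which swaps $\eta_{\pm}$ with $-\eta_{\mp}$. The paper's proof is simply a terser version of what you wrote, omitting the explicit bookkeeping of orientations and the separate treatment of the Fredholm case.
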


\begin{proof}
Changing the sign of $\gamma = [df]$ amounts to replacing $f$ by $-f$; this has the effect of changing the family $\D^\pm_z$ to the family $\D^\pm_{1/z}$. The change of variables $w = 1/z$ in the integral \eqref{E:eta-ep} then changes $\eta_{\pm}(X)$ to $-\eta_{\mp}(X)$, hence the invariant $\eta(X)$ defined by \eqref{E:etapm} changes sign.
\end{proof}

\begin{proposition}\label{P:orient}
Let $X$ be an oriented spin compact manifold and $\eta(X)$ the periodic $\eta$-invariant of a spin Dirac operator $\D^+(X)$. Denote by $-X$ the manifold $X$ with reversed orientation. Then
\[
\eta(X) + \eta(-X) = -2h.
\]
\end{proposition}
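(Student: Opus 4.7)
My plan is to prove the identity by comparing the end-periodic index theorem (Theorem~\ref{T:non}) applied to an arbitrary end-periodic spin manifold $\zp$, whose end is modeled on $\tilde X$, with the same theorem applied to its orientation reversal $-\zp$, whose end is modeled on $\widetilde{-X}$. The crucial geometric input is that on a spinor bundle the chirality operator is the Riemannian volume form, so reversing the orientation of $\zp$ interchanges the two halves of the $\Z/2$--grading, $\Sm^{\pm}(-\zp)=\Sm^{\mp}(\zp)$. Consequently the chiral Dirac operators are swapped,
\[
\D^+(-\zp)\;=\;\D^-(\zp),
\]
and the same relation holds on the end, $\D^+(-\tilde X)=\D^-(\tilde X)$.

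Next I would derive the index relation. The formal adjoint $(\D^{\pm})^*=\D^{\mp}$, when realized on weighted $L^2$--spaces, flips the sign of the Sobolev weight, so that $\dim\coker(\D^+\big|_{L^2_{k,+\ep}})=\dim\ker(\D^-\big|_{L^2_{-k-1,-\ep}})$. This identifies $\ind_+\D^+(-\zp)=\ind_+\D^-(\zp)$ with a combination of $\ind_{\pm}\D^+(\zp)$, after which the change--of--index formula \eqref{E:change}, which records a total jump of $h$ as the weight crosses the unit circle, pins down the sum $\ind_+\D^+(\zp)+\ind_+\D^+(-\zp)$ in terms of $h$ alone.

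Now apply Theorem~\ref{T:non} to both $\zp$ and $-\zp$. Because $\ii(\D^+)=\hat A(TM)$ is a polynomial in Pontryagin forms and therefore orientation--independent, the same transgression form $\omega$ (with $d\omega=\ii(\D^+(X))$) serves both $X$ and $-X$. The Poincar\'e dual of $\gamma\in H^1(-X;\Z)$ is $-Y$, since Poincar\'e duality reverses under orientation reversal; and each of the integrals $\int_Z\ii(\D^+(Z))$, $\int_Y\omega$, $\int_X df\wedge\omega$ changes sign when the underlying manifold is reoriented. Thus, adding the two formulas produced by Theorem~\ref{T:non}, the three bulk integrals cancel against their $-\zp$--counterparts, leaving only the two terms $-\tfrac12(h+\eta(X))$ and $-\tfrac12(h+\eta(-X))$ on the right. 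Equating this to the value of $\ind_+\D^+(\zp)+\ind_+\D^+(-\zp)$ computed in the previous step, and solving for $\eta(X)+\eta(-X)$, yields the desired identity $\eta(X)+\eta(-X)=-2h$.

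The main obstacle is the careful sign bookkeeping: correctly identifying the relation between $\ind_+\D^+(-\zp)$ and $\ind_{\pm}\D^+(\zp)$ (including how orientation reversal interacts with the sign of the Sobolev weight and with the change--of--index contribution), verifying that the spectral set on the unit circle---and hence $h$---is the same for $X$ and $-X$ (as follows from $(\D^+_z)^*=\D^-_{1/\bar z}$ and the fact that this inversion fixes the unit circle), and seeing that the two $\tfrac12 h$ contributions from Theorem~\ref{T:non} combine with the index--sum relation to produce exactly $-2h$ rather than $0$ or $\pm h$. Once the signs are consistent, the remainder is an algebraic manipulation.
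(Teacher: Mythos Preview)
Your approach is essentially the paper's: apply Theorem~\ref{T:non} to an end-periodic spin manifold $\zp$ and to $-\zp$, use that orientation reversal swaps $\D^+$ and $\D^-$ so the integral terms change sign and cancel upon addition, and read off the relation between $\eta(X)$ and $\eta(-X)$. Your attention to the weight--sign issue in relating $\ind_+\D^+(-\zp)$ to $\ind_{\pm}\D^+(\zp)$ via duality and the change--of--index formula is exactly the bookkeeping the paper compresses into the bare assertion that the left-hand side of the second index formula is $-\ind\D^+(\zp)$.

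There is one genuine omission. You begin with ``an arbitrary end-periodic spin manifold $\zp$ whose end is modeled on $\tilde X$,'' but such a $\zp$ need not exist: it requires the hypersurface $Y\subset X$ dual to $\gamma$ to bound a compact spin manifold $Z$, and odd-dimensional spin cobordism groups, while torsion, are not in general zero. The paper handles this by invoking the rational vanishing of $\Omega^{\spin}_{\text{odd}}$: some positive multiple $m\cdot Y$ bounds a spin manifold, so one runs the same argument on an end-periodic manifold with $m$ ends, each modeled on $\tilde X$. The resulting identity then reads $m(\eta(X)+\eta(-X))=-2mh$, and one divides by $m$. Without this step your argument only establishes the proposition under the additional hypothesis that $Y$ is a spin boundary.
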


\begin{proof}
First assume that our $\eta$--invariant arises from an index problem for a spin Dirac operator on an end-periodic manifold $\zp$. Apply Theorem \ref{T:non} to this spin Dirac operator twice, first on $\zp$ and then on $\zp$ with reversed orientation, to obtain the index formulas
\medskip
\[
\ind \D^+ (\zp) = \int_Z \ii (\D^+(Z))\; - \int_Y\omega + \int_X df\wedge\omega\; - \frac 1 2\,(h + \eta (X))\quad \text{and}
\]
\[
-\ind \D^+ (\zp) = -\int_Z \ii (\D^+(Z))\; + \int_Y\omega - \int_X df\wedge\omega\; - \frac 1 2\,(h + \eta (-X)).
\]

\bigskip\noindent 
 Adding these formulas together, we obtain the desired formula $\eta(X) + \eta(-X) = - 2h$. In general, use the fact that the spin cobordism group in odd dimensions vanishes over the rationals, and apply the above argument to an end-periodic manifold $\zp$ with multiple ends.
\end{proof}


\subsection{Spectral flow}\label{S:flow}
Let $\D^+_t (\zp)$ be a family of Dirac operators parameterized by $t \in [0,1]$ to which Theorem \ref{T:non} applies, leading to the formula 
\smallskip
\[
\ind_+\D^+_t(\zp)\;=\;\int_Z\;\ii(\D^+_t(Z)) - \int_Y\;\omega_t\, + 
\int_X\;df\wedge \omega_t\, -\, \frac {h_t + \eta_t (X)}{2}\,.
\]

\bigskip\noindent
As $t$ varies, the integral terms in this formula vary continuously while $\ind_+\D^+_t (\zp)$ may have integer jumps at the values of $t$ for which the operator $\D^+_t (\zp)$ fails to be Fredholm. Following Atiyah--Patodi--Singer \cite[(7.1)]{aps:III}, separate the function $\xi(t) = (h_t + \eta_t (X))/2$ into its continuous part $g(t)$ and the integer value part $j(t)$,
\[
\xi(t) = g(t) + j(t),\quad j(0) = 0.
\]
In favorable circumstances, $j(t)$ can be interpreted as the net number of the spectral points $z$ of the family $\D^+_t (X) -\ln z\cdot df$ crossing the unit circle $|z| = 1$ as $t$ varies. This `spectral flow across the unit circle' generalizes the spectral flow of \cite[Section 7]{aps:III}, and reduces to it in the product end case. We studied this spectral flow in \cite{MRS} for the spin Dirac operators associated to a family of Riemannian metrics $g_t$ on a spin 4-manifold $X$, and showed that under certain regularity assumptions it coincides with the change in the count of solutions to the Seiberg-Witten equations.


\section{Periodic $\tw$--invariants}\label{S:rho}
An important extension of the classical Atiyah--Patodi--Singer $\eta$--invariant $\eta_B (Y)$ involves a twist of the operator $B$ by a unitary representation $\rep: \pi_1(Y) \to U(k)$, yielding invariants 
\[
\eta_{B_{\rep}}(Y)\quad\text{and}\quad  \xi_{B_{\rep}}(Y)\, =\, \frac12\, (h_{B_{\rep}} + \eta_{B_{\rep}}(Y)).
\]
Comparing the untwisted and twisted versions yields the $\tw$--invariant
\begin{equation}\label{E:tw}
\tw_{\rep} (Y,B)\; =\; \xi_{B_{\rep}}(Y) - k\cdot \xi_B(Y),
\end{equation}
see \cite[Section 3]{aps:II}, which has had many applications in geometry and topology.  (In the literature, $\tw$ is denoted variously by $\rho$, $\eta$, eta, and perhaps other symbols.) 
When $B$ is the odd signature operator on an oriented $(4n-1)$--dimensional manifold $Y$, as defined in \cite[(4.6)]{aps:I}, the invariants $\tw_{\rep}(Y,B)$ are metric-independent, and so give smooth invariants of $Y$. When $B$ is the Dirac operator, only the reduction of $\tw_{\rep} (Y,B)$ modulo integers is metric independent.  

We will use the same procedure to define periodic $\tw$--invariants for chiral Dirac operators on even-dimensional manifolds $X$ equipped with a unitary representation. This section contains the definition and basic properties of these periodic $\tw$--invariants. In the following Section~\ref{S:psc}, they are used to study metrics of positive scalar curvature on $X$.


\subsection{Definition of periodic $\tw$--invariants}
Let $X$ be a compact even-dimensional Riemannian manifold with a choice of a primitive cohomology class $\gamma \in H^1 (X;\Z)$, and let $\D^+ = \D^+(X)$ be a chiral Dirac operator associated with a Dirac bundle on $X$ with the property that the spectral set of $\D^+_z$ is discrete. Suppose that in addition we have a  representation $\rep: \pi_1 (X) \to U(k)$ such that the twisted Dirac operator $\D^+_{\rep} = \D^+_{\rep}(X)$ has the property that the spectral set of $(\D^+_{\rep})_z = \D^+_{z\rep}$ is discrete. Let
\[
\xi (X,\D^+_{\rep})\;=\;\frac 1 2 \left(\,h(\D^+_{\rep})\, +\, \eta (X,\D^+_{\rep})\,\right)
\]
and define the periodic $\tw$--invariant by
\[
\tw_{\rep}(X, \D^+)\;=\;\xi (X,D^+_{\rep}) - k\cdot \xi (X,\D^+),
\]
with the periodic $\eta$--invariants defined as in \eqref{E:etapm} and the $h$--invariants as in \eqref{E:h}. The notations we used there did not include the dependence on various choices that we have made, as these did not play much of a role in the analysis. We will now need to keep track of the Riemannian metric $g$ on $X$ and the function $f: \xtilde \to \R$ such that $\gamma = [df]$. We will incorporate these into our notations as needed, by writing (for example) $\tw_{\rep}(X, \D^+, \barf, g)$.

In the special case when the operators $\D^+_z$ and  $\D^+_{z\rep}$ are invertible on the unit circle $|z| = 1$, which is equivalent to the $L^2$--closures of the operators $\D^+(\tilde X)$ and $\D^+_{\rep}(\tilde X)$ being Fredholm, one can use formula \eqref{E:etaintro} for the $\eta$--invariants in the above definition instead of \eqref{E:etapm}.


\subsection{Two special cases}\label{S:special}
Let $X = S^1 \times Y$ for some odd--dimensional manifold $Y$. Equip $X$ with a product metric $g = d\theta^2 + g^Y$ and let $\barf = \theta$. If $\rep: \pi_1 (X) \to U(k)$ factors through $\alpha: \pi_1 (Y) \to U(k)$, the spectral sets of $\D^+_z$ and $\D^+_{z\rep}$ are automatically discrete, and we have
\begin{equation}\label{E:rho-prod}
\tw_\rep (X, \D^+, \barf, g) = \tw_\rep (Y, \D,g^Y),
\end{equation}
where $\D$ is the self-adjoint Dirac operator on $Y$. This follows from Section \ref{S:product} and Section \ref{S:thm-stat} in which the periodic $\eta$--invariants and $h$--invariants of $X$ were identified with those of $Y$.

Another important special case comprises spin Dirac operators on manifolds with metrics of positive scalar curvature. Since such metrics are of particular interest in both this and next sections, we will record the following simple lemma.

\begin{lemma}\label{L:vanish}
Let $\zp$ be an end-periodic spin manifold with an end-periodic metric $g$ of positive scalar curvature. Then the $L^2$--closure of the associated spin Dirac operator $\D^+(\zp)$ is Fredholm and has zero index. The same is true for the twisted operator $\D^+_{\rep}(\zp)$ associated with any representation $\alpha: \pi_1(\zp) \to U(k)$.
\end{lemma}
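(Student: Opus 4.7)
The plan is to exploit the Lichnerowicz formula $\D^2 = \nabla^*\nabla + s/4$, which survives both the Fourier--Laplace twist by $\ln z\cdot df$ on the unit circle $|z|=1$ and any further twist by a flat unitary bundle. Since the end-periodic metric $g$ on $\zp$ has positive scalar curvature, its restriction to the end descends to a metric on $X$ whose scalar curvature $s$ is everywhere positive; this is the geometric input that drives the argument.

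First I would establish Fredholmness of $\D^+(\zp)$ via Proposition \ref{P:taubes} by checking that the family $\D^+_z(X) = \D^+(X) - \ln z \cdot df$ is invertible for every $z$ on the unit circle. When $|z|=1$, $\ln z$ is purely imaginary and Clifford multiplication by the real $1$-form $df$ is skew-adjoint, so $\D_z$ is self-adjoint on the closed manifold $X$. Viewing $\D_z$ as the spin Dirac operator twisted by the flat unitary connection $-\ln z \cdot df$, the Lichnerowicz formula becomes
\[
\D_z^2\; =\; \nabla_z^*\nabla_z\, +\, \frac{s}{4},
\]
with no curvature contribution from the flat twist. Since $s > 0$, the operator $\D_z^2$ is strictly positive, hence $\D_z$ has trivial kernel and is invertible; this is exactly Taubes' criterion, and by Corollary \ref{C:fred} it also forces $\ind \D^+(X) = 0$.

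To obtain the vanishing of $\ind \D^+(\zp)$, let $u \in L^2(\zp, \Sm^+)$ lie in $\ker \D^+(\zp)$. Interior elliptic regularity on the bounded-geometry manifold $\zp$ gives $u \in L^2_1(\zp,\Sm^+)$, and a cutoff-function integration by parts, combined with the Lichnerowicz identity $\D^-\D^+ = \nabla^*\nabla + s/4$ on $\zp$, yields
\[
0\; =\; \int_{\zp}|\D^+ u|^2\; =\; \int_{\zp}|\nabla u|^2\, +\, \frac{1}{4}\int_{\zp} s\,|u|^2,
\]
whence $u \equiv 0$ since $s > 0$. The identical argument applied to $\ker \D^-(\zp)$ shows it too is trivial, so $\ind \D^+(\zp) = 0$. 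For the twisted operator $\D^+_\rep(\zp)$, the flat $U(k)$-bundle $E_\rep$ associated with $\rep$ contributes no curvature term, so the Lichnerowicz formula retains its positive potential $s/4$; the same two steps (Fredholmness via the obvious analog of Taubes' criterion for the twisted family, and kernel vanishing via Lichnerowicz on $\zp$) go through unchanged.

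The main obstacle I foresee is twofold: carefully justifying the cutoff integration by parts for $L^2$ kernel elements on the non-compact $\zp$, and, for the twisted case, cleanly identifying the Fourier--Laplace family governing Fredholmness of $\D^+_\rep(\zp)$ when $\rep$ is a representation of $\pi_1(\zp)$ rather than of $\pi_1(X)$. Both are routine given the bounded geometry of $\zp$ and the flatness of the twist, so neither should present any genuine analytic difficulty.
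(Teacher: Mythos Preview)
Your proposal is correct and follows essentially the same approach as the paper: Lichnerowicz on $X$ (with the flat twist by $-\ln z\cdot df$ for $|z|=1$) to verify Taubes' criterion for Fredholmness, and Lichnerowicz on $\zp$ to kill the $L^2$ kernels of $\D^\pm$. The paper also mentions, as an alternative route to Fredholmness, the Gromov--Lawson observation that positive scalar curvature makes $\D^+(\zp)$ uniformly invertible at infinity, but its main argument is the one you give.
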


\begin{proof}
Since the end-periodic metric $g$ has positive scalar curvature, the operators  $\D^+(\zp)$ and $\D^+_{\rep}(\zp)$ are {\em uniformly invertible at infinity}, which implies that their $L^2$--closures are Fredholm; see Gromov--Lawson \cite{gromov-lawson:complete}.  Alternatively, assume that the end of $\zp$ is modeled on $\tilde X \to X$, and use the Lichnerowicz formula~\cite{lichnerowicz:spinors} on $X$ to prove that $\D^+_z(X)$ and $\D^+_{z\rep}(X)$ are invertible on the unit circle $|z| = 1$. The statement about Fredholmness now follows from Proposition~\ref{P:taubes}.  Applying the Lichnerowicz formula once more, this time on $\zp$, we conclude that the operators $\D^+(\zp)$ and $\D^+_{\rep}(\zp)$ are invertible, hence their indices vanish.
\end{proof}

It follows from the proof of Lemma~\ref{L:vanish} that, whenever $X$ is a spin manifold with a metric $g$ of positive scalar curvature and the associated spin Dirac operator $\D^+$, the operators $\D^+_z$ and $\D^+_{z\alpha}$ are invertible on the unit circle and hence the invariants $\tw_{\rep}(X,\D^+,f,g)$ are well defined for all representations $\alpha: \pi_1 (X) \to U(k)$.


\subsection{Dependence on choices}
In this section, we will use Theorem~\ref{T:non} to study how the invariants $\tw_{\rep}(X,\D^+,f,g)$ depend on the choices of $f$ and $g$. 

Choose a submanifold $Y\subset X$ dual to the generator $\gamma \in H^1(X;\Z)$. Then 
\begin{equation}\label{E:xtilde}
\xtilde = (\,\underbrace{\ldots \cup W_{-2} \cup W_{-1}}_{\xm}\,)\, \cup\,W_0\,\cup\, (\,\underbrace{W_1\,\cup\,W_2\,\cup\ldots}_{\xp}\,)
\end{equation}
where $W_k$ are isometric copies of the fundamental segment $W$ obtained by cutting $X$ open along $Y$. We will view $\xtilde$ as the union of $W_0$ with two ends $X_+$ and $X_-$ and use Theorem \ref{T:non} to compute the indices of the operators $\D^+(\tilde X)$ and $\D^+_{\rep}(\tilde X)$.

The statement of Theorem \ref{T:non} makes use of functions $f_{\pm}: \tilde X \to \R$ associated with the ends $X_+$ and $X_-$ such that $\gamma = [df_+]$ and $-\gamma = [df_-] \in H^1 (X;\Z)$, respectively. For example, given a function $\barf: \tilde X \to \R$ with $\gamma = [df]$, one could use $f_+ = f$ and $f_- = -f$ as such functions. In addition, Theorem \ref{T:non} requires a choice of local index forms and transgressed classes. Denote by $\ii(\D^+(X))$ the local index form for $\D^+(X)$ then $\ii(\D^+_{\rep}(X)) = k \cdot \ii(\D^+(X))$. Our assumption that the spectral sets of $\D^+_z(X)$ and $\D^+_{z\rep}(X)$ are discrete ensures that the operators $\D^+(X)$ and $\D^+_{\rep}(X)$ have zero index, and hence the forms $\ii(\D^+(X))$ and $\ii(\D^+_{\rep}(X))$ are exact. If $\omega$ is a transgressed class such that $d\omega = \ii(\D^+(X))$ then evidently $d(k \cdot \omega) = \ii(\D^+_{\rep}(X))$.

\begin{lemma}
The invariant $\tw_{\rep}(X, \D^+, \barf, g) $ does not depend on the choice of the function $\barf$.
\end{lemma}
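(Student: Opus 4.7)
Any two lifts $f_1, f_2: \xtilde \to \R$ of circle-valued functions on $X$ with $\gamma = [df_1] = [df_2]$ have the same periodicity, so their difference descends to a smooth function $h: X \to \R$ with $f_2 = f_1 + h$. The first step is the conjugacy relation
\[
\D^\pm_{z, f_2} \;=\; z^h\, \D^\pm_{z, f_1}\, z^{-h}, \qquad z \in \C^*,
\]
which is a direct consequence of the identity $z^{-h}\,\D^\pm(X)\,z^h = \D^\pm(X) + \ln z \cdot dh$ valid for any first-order Dirac-type operator and any smooth real-valued function $h$. The scalar $z^h$ is a bounded invertible multiplication operator on sections over $X$, and it is unitary when $|z| = 1$. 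Consequently, conjugation by $z^h$ preserves the spectral set of the family, and it carries the solution space of the system \eqref{E:dz} for $f_2$ isomorphically onto that for $f_1$ at every point $z$ on the unit circle; hence $h(X, \D^+, f_1) = h(X, \D^+, f_2)$, and the identical argument applies to the $\alpha$-twisted family. It therefore suffices to prove
\[
\eta(X, \D^+_\alpha, f_2) - \eta(X, \D^+_\alpha, f_1) \;=\; k\,\bigl[\eta(X, \D^+, f_2) - \eta(X, \D^+, f_1)\bigr].
\]

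Substituting the conjugacy relation into the definition \eqref{E:etaintro} (or \eqref{E:eta-ep} in the non-Fredholm case), and using the cyclicity of the trace on the closed manifold $X$ together with the fact that the scalar function $z^{\pm h}$ commutes with Clifford multiplication by $df_i$, one obtains
\begin{equation}\label{eq:difplan}
\eta(X, \D^+, f_2) - \eta(X, \D^+, f_1) \;=\; \frac{1}{\pi i}\int_0^{\infty}\oint_{|z| = 1}\Tr\!\left(dh \cdot \D^+_{z, f_1}\,e^{-t\,\D^-_{z, f_1}\D^+_{z, f_1}}\right)\frac{dz}{z}\,dt,
\end{equation}
interpreted in the regularized sense of Section~\ref{S:non}, with the analogous identity for the $\alpha$-twisted operator. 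The main content of the proof is to recognize the right-hand side of \eqref{eq:difplan} as a \emph{local} invariant of $(X, g, h)$ which is multiplied by the rank $k$ upon twisting by a flat bundle. Using the identity $\D^+_z\, e^{-t\D^-_z\D^+_z} = -\frac{d}{dt}\bigl((\D^-_z)^{-1}\,e^{-t\D^-_z\D^+_z}\bigr)$, which is valid wherever $\D^-_z$ is invertible, one reduces \eqref{eq:difplan} to the constant term in the $t \to 0$ asymptotic expansion of
\[
\frac{1}{\pi i}\oint_{|z| = 1}\Tr\bigl(dh\cdot(\D^-_z)^{-1}\,e^{-t\D^-_z \D^+_z}\bigr)\frac{dz}{z},
\]
exactly in the spirit of the formula for $\eta(X)$ displayed in Section~\ref{S:end-eta}. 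Standard heat-kernel asymptotics on the closed manifold $X$ then express this constant term as the integral over $X$ of a local density in the Riemannian metric, the Dirac bundle connection, and $dh$.

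Since $\alpha$ is a unitary representation, it produces a \emph{flat} unitary bundle of rank $k$, and its curvature contributes trivially to every heat-kernel coefficient; consequently the local densities for the $\alpha$-twisted problem equal $k$ times those for the untwisted one, yielding the displayed identity and hence
\[
\tw_\alpha(X, \D^+, f_2)\;=\;\tw_\alpha(X, \D^+, f_1).
\]
The main technical obstacle is this last locality step: justifying the regularized integration in $t$ and verifying that the constant term in the small-$t$ asymptotic expansion is indeed a local density over $X$ which scales by the rank under twisting by a flat unitary bundle. This is of the same flavor as the local index statements used in the proof of Theorem~\ref{T:aps} (see \eqref{E:ahat}), but applied to the single non-supersymmetric trace $\Tr(dh\cdot\D^+_z\,e^{-t\D^-_z\D^+_z})$ rather than to a chiral supertrace; the required pseudodifferential and heat-kernel estimates are of the type gathered in Section~\ref{S:estimates}.
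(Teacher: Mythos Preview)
Your approach is genuinely different from the paper's, and the first half of it is both correct and attractive. The conjugacy $\D^\pm_{z,f_2}=z^{h}\,\D^\pm_{z,f_1}\,z^{-h}$ is right, the equality of the $h$--invariants follows immediately, and cyclicity of the trace on the closed manifold $X$ gives exactly your difference formula with $dh$ in place of $df_1$. Up to that point this is cleaner than what the paper does.

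The paper, by contrast, never touches the integrand of the $\eta$--integral directly. It applies Theorem~\ref{T:non} to $\xtilde$, viewed as an end-periodic manifold with \emph{two} ends, computing $\ind_+\D^+(\xtilde)$ twice: once using $(f_-,f_+)=(-f_0,f_0)$ and once using $(-f_0,f_1)$. Subtracting yields the explicit formula
\[
\xi(X,\D^+,f_1,g)-\xi(X,\D^+,f_0,g)\;=\;\int_X df_1\wedge\omega-\int_X df_0\wedge\omega\;=\;\int_X dh\wedge\omega,
\]
and since the transgressed class for the twisted operator is $k\omega$, the corresponding $\alpha$--twisted difference is $k$ times this. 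The cancellation is then immediate. Note that $\int_X dh\wedge\omega=-\int_X h\cdot\ii(\D^+(X))$ by Stokes, so the paper's method actually \emph{proves} that the difference you wrote down equals a local density integrated against $h$, and hence scales by $k$ under flat twisting.

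The gap in your argument is precisely the step you flag as the ``main technical obstacle,'' and it is more serious than your last paragraph suggests. Rewriting the $t$--integral via $(\D_z^-)^{-1}$ reduces the question to the constant term in the small--$t$ expansion of $\Tr\bigl(dh\cdot(\D_z^-)^{-1}e^{-t\D_z^-\D_z^+}\bigr)$. But $dh\cdot(\D_z^-)^{-1}$ is a pseudodifferential operator of order $-1$, and for such operators the constant term in the heat--trace expansion is in general \emph{not} given by a local density; it can involve global spectral data (this is exactly the phenomenon that distinguishes the noncommutative residue, which is local, from the regularized trace, which is not). The analogy with \eqref{E:ahat} breaks down because that limit is a \emph{supertrace}, where the chiral cancellation kills all singular and nonlocal contributions; a single non-supersymmetric trace has no such mechanism. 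So the locality claim is true, but your proposed route does not establish it. The paper's index--theoretic argument sidesteps this entirely by never needing to analyze that trace.
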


\begin{proof}
Suppose that $\barf_0$ and $\barf_1: \xtilde \to \R$ are two choices of function $f$ such that $\gamma = [df_0] = [df_1]$. Then we can compute the index of $\D^+(\tilde X)$ in two different ways.  One, as described above, will use $f_- = - f_0$ and $f_+ = f_0$. The other will continue to use $f_- = -f_0$ but will use $f_+ = f_1$.  For the first choice, using Theorem \ref{T:non} and taking advantage of Proposition~\ref{P:orient-hom}, we obtain
\begin{align*}
\ind_+ \D^+ (\tilde X) &= \int_{W_0} \ii (\D^+(X))\;  \\
& - \int_Y\omega + \int_X df_0 \wedge\omega\; - \frac 1 2\,(h + \eta(X, \D^+, \barf_0, g))\\
& + \int_Y\omega - \int_X df_0\wedge\omega\; - \frac 1 2\,(h - \eta(X, \D^+, \barf_0, g)).
\end{align*}
Since $\int_{W_0} \ii (\D^+(X)) = 0$, this implies that $\ind_+ \D^+ (\tilde X) = -h$. Note that the same answer could also be obtained from the change of index formula \eqref{E:change}. Now we use the second choice to obtain
\begin{align*}
\ind_+ \D^+ (\tilde X) &= \int_{W_0} \ii (\D^+(X))\;  \\
& - \int_Y\omega + \int_X df_1 \wedge\omega\; - \frac 1 2\, (h + \eta(X, \D^+, \barf_1, g)) \\
& + \int_Y\omega - \int_X df_0\wedge\omega\;- \frac 1 2\,(h - \eta(X, \D^+, \barf_0, g)),
\end{align*}
from which it follows that 
\begin{equation}\label{E:diffeta}
\xi(X, \D^+, \barf_1, g) - \xi(X, \D^+, \barf_0, g) = \int_X df_1 \wedge\omega\, -  \int_X df_0 \wedge\omega.
\end{equation}

\smallskip\noindent
Here, we used the fact that $h$ is independent of $f$, see the discussion following \eqref{E:h}. Repeating this argument with $\D^+$ replaced by $\D^+_{\rep}$, and taking advantage of the relation between the index forms and transgressed classes for the two operators described above, we obtain
\medskip
\begin{equation}\label{E:diffalpha}
\xi(X, \D^+_{\rep}, \barf_1, g) - \xi(X, \D^+_{\rep}, \barf_0, g) = k \left(\int_X df_1 \wedge\omega\; -  \int_X df_0 \wedge\omega\right).
\end{equation}

\medskip\noindent
Subtracting $k$ times~\eqref{E:diffeta} from~\eqref{E:diffalpha} yields the result.
\end{proof}

From this point on, we remove the function $\barf$ from the notation for the periodic $\tw$--invariant, and turn our attention to the metric dependence. Except in those circumstances where the kernel and cokernel of $\D^+(X)$ have topological interpretations, we do not expect that the $\tw$--invariant is metric independent. Indeed, this is not true even in the product case.

\begin{lemma}\label{L:g-indep}
Let $g_0$ and $g_1$ be Riemannian metrics on $X$ with respect to which the spectral sets of $\D^+_z$ and $\D^+_{z\alpha}$ are discrete. Then
$$
\tw_{\rep}(X, \D^+, g_1)\;=\;  \tw_{\rep}(X, \D^+, g_0) \pmod{\Z}.
$$
\end{lemma}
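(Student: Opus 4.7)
The plan is to connect $g_0$ and $g_1$ by a smooth path $g_t$, $t\in[0,1]$, of Riemannian metrics on $X$ and to show that $\tw_{\rep}(X,\D^+,g_t)$ has vanishing derivative on the smooth part of the path while experiencing only integer jumps at exceptional parameters.

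First I would arrange the path $g_t$ to be generic in the sense that the spectral sets of both $\D^+_z$ and $(\D^+_{\rep})_z$ associated with $g_t$ are discrete and disjoint from the unit circle $|z|=1$ for all but finitely many exceptional values $t_1,\ldots,t_N$. This is an application of a transversality argument analogous to Theorem~\ref{T:fred1} and its refinements in \cite{ruberman-saveliev:periodic}, and it ensures that $h(\D^+_t)$, $h(\D^+_{t,\rep})$, $\eta(X,\D^+_t)$, and $\eta(X,\D^+_{t,\rep})$ are smooth functions of $t$ on the complement of the exceptional values, and hence so is $\tw_{\rep}(X,\D^+,g_t)$.

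Next I would establish a variational formula for the periodic $\eta$-invariant. Differentiating the definition \eqref{E:etaintro} with respect to $t$ and justifying the interchange of the derivative with the contour and time integrals (using the uniform heat kernel estimates on the closed manifold $X$ applied to the holomorphic family $\D^+_z$), one expects a formula of the form
\[
\frac{d}{dt}\,\eta(X,\D^+_t)\;=\;\Lambda_t(\D^+),
\]
where $\Lambda_t(\D^+)$ is a \emph{local} quantity, expressible as the integral over $X$ of a pointwise density built from $\dot\D^+_t:=\tfrac{d}{dt}\D^+_t$ and from the heat kernels of the family $\D^-_{t,z}\D^+_{t,z}$. The critical observation is then multiplicativity of $\Lambda_t$ under flat unitary twists: since $\rep$ is unitary, the twisting bundle $E_\rep$ is flat of rank $k$, so pointwise on the diagonal the heat kernel of $(\D^-_{\rep})_{t,z}(\D^+_{\rep})_{t,z}$ equals the heat kernel of $\D^-_{t,z}\D^+_{t,z}$ tensored with $\mathrm{id}_{E_\rep}$, and $\dot\D^+_{\rep,t}=\dot\D^+_t\otimes\mathrm{id}_{E_\rep}$. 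Taking fiberwise traces extracts a factor of $k$, so $\Lambda_t(\D^+_\rep)=k\cdot\Lambda_t(\D^+)$, and consequently
\[
\frac{d}{dt}\,\tw_{\rep}(X,\D^+,g_t)\;=\;\tfrac12\bigl(\Lambda_t(\D^+_\rep)-k\,\Lambda_t(\D^+)\bigr)\;=\;0
\]
on the smooth part of the path.

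Finally, at each exceptional parameter $t_i$, spectral points of $\D^+_z$ or of $(\D^+_\rep)_z$ cross the unit circle. By the change of index formula \eqref{E:change} together with the integrality of the dimensions $d(z)$ in \eqref{E:h}, the corresponding jumps in $\xi(X,\D^+,g_t)$ and $\xi(X,\D^+_\rep,g_t)$ are integers; hence the jumps in $\tw_{\rep}(X,\D^+,g_t)$ at each $t_i$ are integers as well. Combining this with the vanishing of the derivative on the smooth part, integration over $t\in[0,1]$ gives $\tw_{\rep}(X,\D^+,g_1)-\tw_{\rep}(X,\D^+,g_0)\in\Z$, as desired.

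The main obstacle will be the derivation of the variational formula in the second step. The analogous formulas on closed manifolds are classical and follow from the meromorphic extension of the $\eta$-function, but here $\eta(X,\D^+)$ is defined via a regularized double integral on the contour $|z|=1$ and on $t\in(0,\infty)$, so one must justify the interchange of $\tfrac{d}{dt}$ with both integrals and carry out the short-time heat kernel asymptotic analysis for the family $\D^+_{t,z}$ by hand. Since all of this takes place on the closed manifold $X$ and does not require the more delicate end-periodic estimates of Section~\ref{S:estimates}, it is of a standard nature.
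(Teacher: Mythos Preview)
Your approach is genuinely different from the paper's. The paper does not use a path of metrics at all: it views $\xtilde$ as an end-periodic manifold with two ends, puts the lift of $g_0$ on $\xm$ and the lift of $g_1$ on $\xp$ (interpolated on $W_0$), and applies Theorem~\ref{T:non} to both $\D^+(\xtilde)$ and $\D^+_{\rep}(\xtilde)$. Subtracting $k$ times the first index formula from the second, all integral terms cancel because $\ii(\D^+_{\rep}(X))=k\cdot\ii(\D^+(X))$ and one may take $\omega_{\rep}=k\,\omega$; what remains is
\[
\tw_{\rep}(X,\D^+,g_1)-\tw_{\rep}(X,\D^+,g_0)\;=\;\ind_+\D^+_{\rep}(\xtilde)-k\cdot\ind_+\D^+(\xtilde)\;\in\;\Z.
\]
This avoids any variational analysis of the periodic $\eta$-invariant.

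Your variational route has a real gap in the multiplicativity step. The assertion that ``pointwise on the diagonal the heat kernel of $(\D^-_{\rep})_{t,z}(\D^+_{\rep})_{t,z}$ equals the heat kernel of $\D^-_{t,z}\D^+_{t,z}$ tensored with $\id_{E_{\rep}}$'' is false: the heat kernel is a global object, and even for a flat unitary bundle the holonomy of $\rep$ enters at every finite time (already for the scalar Laplacian on $S^1$ twisted by a nontrivial flat line bundle the two heat kernels differ). What \emph{is} true is that the short-time asymptotic coefficients on the diagonal are local, so $a^{\rep}_j(x)=k\,a_j(x)$. To salvage your argument you would need to first establish a variational formula showing that $\tfrac{d}{ds}\eta(X,\D^+_s)$ is given by such a local asymptotic coefficient, and only then invoke locality; this is the analogue of the classical computation for the APS $\eta$-invariant, but with the extra contour integral it is not in the paper and would have to be proved from scratch.

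There is also a secondary issue: the hypothesis of the lemma only gives discreteness of the spectral sets at the endpoints $g_0,g_1$, and you need it along the entire path for $\eta$ to even be defined. Discreteness fails exactly when $\D^+_z$ is non-invertible for \emph{every} $z\in\C^*$, which is plausibly of infinite codimension, but this needs an argument. The paper's two-ended index computation sidesteps both difficulties.
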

\begin{proof}
We again consider $\xtilde$ as an end-periodic manifold with two ends, but now use a lift of the metric $g_0$ on $\xm$ and a lift of metric $g_1$ on $\xp$, with a metric $g$ on $W_0$ interpolating between the two. Using $f_- = -f$ and $f_+ = f$ for a choice of function $f$, we obtain
\begin{align*}
\ind_+ \D^+(\xtilde) &= \int_{W_0} \ii (\D^+(X,g))\;  \\
& - \int_Y\omega_1 + \int_X df \wedge\omega_1\; - \frac 1 2\,(h_1 + \eta(X, \D^+, f, g_1)) \\
& + \int_Y\omega_0 - \int_X df\wedge\omega_0\; - \frac 1 2\,(h_0 - \eta(X, \D^+, f, g_0))
\end{align*}
with a similar expression for $\ind_+ \D^+_{\rep}(\xtilde)$. Subtract $k$ times the first expression from the second and take into account the behavior of the local index forms and transgressed classes to conclude that
\[
\tw_{\rep}(X, \D^+, g_1) - \tw_{\rep}(X, \D^+, g_0)
\]
is  an integer. 
\end{proof}

\begin{remark}\label{R:invariance}
It is worth pointing out that the periodic $\tw$--invariant is also a diffeomorphism invariant, in the same sense as the classical $\tw$--invariant is; see Botvinnik--Gilkey \cite[page 516]{botvinnik-gilkey:bordism}. Let $H \subset \diff(X)$ denote the group of orientation-preserving diffeomorphisms $F: X \to X$ that preserve the class $\gamma \in H^1(X;\Z)$ and all spin structures.  Then for any $F \in H$,  any representation $\rep: \pi_1 (X) \to U(k)$, and any metric $g$ on $X$ for which $\tw$ is defined, we have 
\begin{equation}\label{E:invariance}
\tw_{\rep}(X, \D^+, g)\, =\, \tw_{F^*\rep}\, (X, \D^+, F^*g). 
\end{equation}
Note that the pull back of representations is only well defined on conjugacy classes (because of base point issues) but changing $\alpha$ within its conjugacy class does not affect $\tw_{\rep}$. 
\end{remark}


\subsection{Reduction to the classical $\tw$--invariant}
We saw in Section \ref{S:special} that the periodic $\tw$--invariant reduces to the classical $\tw$--invariant on product manifolds $X = S^1 \times Y$; see formula \eqref{E:rho-prod}. We will now show that a similar reduction holds for more general manifolds $X$.

Let $X$ have a metric $g$ which restricts to a product metric $g = d\theta^2 + g^Y$ on a product region $I \times Y \subset X$. Recall from Section \ref{S:product} that the operator $\D^+(X)$ has the form $d\theta\cdot(\p/\p \theta - \D(Y))$ on $I \times Y$. Similarly, for any representation $\alpha: \pi_1 (X) \to U(k)$, the operator $\D^+_{\rep}(X)$ has the form $d\theta\cdot(\p/\p \theta - \D_{\rep}(Y))$, where we use the same symbol $\rep$ to denote the restriction of $\alpha$ to $\pi_1 (Y)$.

\begin{proposition}\label{P:product-rho}
Let $X$ be a manifold as above, and suppose that the $L^2$--closures of the operators $\D^+(\tilde X)$ and $\D^+_{\rep}(\tilde X)$ are Fredholm.
\begin{enumerate}
\item If both operators $\D(Y)$ and $\D_{\rep}(Y)$ are invertible then 
\begin{equation}\label{E:rhoY}
\tw_{\rep}(X, \D^+, g)\;=\; \tw_{\rep}(Y, \D, g^Y) \pmod{\Z}.
\end{equation}
\item \label{l:psc}
 If both metrics $g$ and $g^Y$ have positive scalar curvature then~\eqref{E:rhoY} holds as an equality of real numbers.
\end{enumerate}
\end{proposition}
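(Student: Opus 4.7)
The plan is to construct a \emph{hybrid} manifold $M$ with one cylindrical end and one periodic end, and to apply an index theorem that splices the Atiyah--Patodi--Singer formula with Theorem~\ref{T:non}; the cancellation of the two boundary corrections then identifies the two $\tw$-invariants.

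\emph{Construction of $M$.} Since $g$ is a product on $I \times Y$, we may arrange $f = \theta$ on this region. Cut the cyclic cover $\tilde X$ at a copy of $Y$ lying in a product region, isolating the fundamental domain $W_0$ whose boundary is $-Y \cup Y$. Set
\[
M \,=\, (-\infty, 0] \times Y \,\cup_{Y}\, W_0 \,\cup_{Y}\, \txp,
\]
where $(-\infty, 0] \times Y$ carries the product metric $d\theta^2 + g^Y$ and $\txp$ is the positive periodic end of $\tilde X$. The metric extends smoothly across the gluing loci by the product assumption. Under the Fredholmness hypothesis, $\D^+(M)$ and $\D^+_{\rep}(M)$ are both Fredholm: Proposition~\ref{P:taubes} handles the periodic end while the invertibility of $\D(Y)$ and $\D_\rep(Y)$ handles the cylindrical end.

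\emph{Hybrid index theorem.} The key technical step is to establish
\[
\ind \D^+(M) \,=\, \int_{W_0} \ii(\D^+) \,+\, \xi(Y, \D) \,-\, \int_Y \omega \,+\, \int_X df \wedge \omega \,-\, \xi(X, \D^+),
\]
together with the analogous formula for $\D^+_\rep$ obtained by replacing $\omega$ with $k\omega$. The proof parallels Section~\ref{S:proof}: introduce a regularized trace on $M$ combining Melrose's $b$-trace on the cylindrical end with the regularized trace of Section~\ref{S:b-trace} on the periodic end, then study the supertrace of $e^{-t\D^2}$. The $t \to 0$ limit produces $\int_{W_0} \ii(\D^+)$; the $t \to \infty$ limit produces the index; and integrating the commutator trace formula yields both the classical $\xi(Y, \D)$ correction (with the $+$ sign because this end lies on the opposite side from the APS orientation convention) and the periodic correction of Section~\ref{S:proof}. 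As a consistency check, when $X = S^1 \times Y$ with a product metric, $M = \R \times Y$, $\omega$ may be chosen to vanish, and $\xi(X, \D^+) = \xi(Y, \D)$, compatible with $\ind \D^+(\R \times Y) = 0$.

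\emph{Conclusion and main obstacle.} Because $\rep$ is unitary we have $\ii(\D^+_\rep) = k \cdot \ii(\D^+)$, so $k\omega$ is a primitive for $\ii(\D^+_\rep)$; forming $\ind \D^+_\rep(M) - k \cdot \ind \D^+(M)$ therefore cancels every $\ii$- and $\omega$-term, leaving
\[
\ind \D^+_\rep(M) \,-\, k \cdot \ind \D^+(M) \,=\, \tw_\rep(Y, \D, g^Y) \,-\, \tw_\rep(X, \D^+, g).
\]
For Part~(1), the left-hand side is an integer and we conclude $\tw_\rep(X, \D^+, g) \equiv \tw_\rep(Y, \D, g^Y) \pmod{\Z}$. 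For Part~(2), the lift of $g$ to $W_0 \cup \txp$ has positive scalar curvature and so does the product metric on the cylindrical end, so the whole metric on $M$ is PSC; Lemma~\ref{L:vanish} applied to $M$ gives $\ind \D^+(M) = \ind \D^+_\rep(M) = 0$, yielding the exact equality. The principal difficulty in this plan is the hybrid index theorem itself: though its scheme is identical to that of Section~\ref{S:proof}, one must splice two distinct regularization schemes and adapt the heat-kernel estimates of Section~\ref{S:estimates} to a manifold of mixed end type.
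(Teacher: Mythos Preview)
Your overall architecture is exactly that of the paper: build the two-ended manifold $M=((-\infty,0]\times Y)\cup W_0\cup\txp$, apply an index formula, subtract the $\rep$-twisted version from $k$ times the untwisted one, and invoke Lemma~\ref{L:vanish} for part~(2). The difference is in how you obtain the index formula on $M$.

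You propose a genuinely new ``hybrid'' index theorem, splicing Melrose's $b$-trace on the cylindrical end with the regularized trace of Section~\ref{S:b-trace} on the periodic end, and you correctly flag this as the main obstacle. The paper avoids this obstacle entirely by observing that a cylindrical end \emph{is} a periodic end: the half-cylinder $(-\infty,0]\times Y$ is modeled on the infinite cyclic cover of $S^1\times Y$. Thus $M$ is an end-periodic manifold with two periodic ends, and Theorem~\ref{T:aps} (in its routine multi-end form) applies directly. The contribution of the cylindrical end is then $-\tfrac12\,\eta(S^1\times Y)$ with the appropriate sign from Proposition~\ref{P:orient-hom}, and Section~\ref{S:product} has already computed $\eta(S^1\times Y)=\eta_{\D}(Y)$. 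No new analysis is needed.

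A second simplification you miss: since the metric is a product on $I\times Y$, the paper chooses $f$ so that $df$ is supported in that product region. Remark~\ref{R:product} then kills the $\int_Y\omega$ and $\int_X df\wedge\omega$ terms outright, so they never enter the subtraction.

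In short, your plan would work, but the ``principal difficulty'' you identify is an artifact of not recognizing the cylindrical end as a special periodic end; once you do, the proof reduces to a direct application of Theorem~\ref{T:aps} and the product computation of Section~\ref{S:product}.
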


\begin{proof}
Consider an end-periodic manifold $\zp = ((-\infty,0] \times Y)\, \cup\, W_0 \, \cup\, \xp$. Because of the product region $I \times Y\subset X$, the metric $g$ on $X$ induces an obvious metric on $\zp$. Choose the function $f$ on $\xp$ so that $df$ has support in the product region. By Remark~\ref{R:product}, the terms involving the integrals of $\omega$ and $df\wedge \omega$ that appear in the index theorem all vanish. Using Theorem~\ref{T:aps} and the result of Section~\ref{S:product}, we obtain
$$
\ind \D^+(\zp) = \int_{W_0} \ii (\D^+(X,g)) -\frac12\left(\eta(X,\D^+)-\eta_{\D}(Y)\right),
$$
and a similar formula for $\ind \D_{\rep}^+(\zp)$. Formula \eqref{E:rhoY} follows by subtraction as in the proof of Lemma \ref{L:g-indep}. If $g$ has positive scalar curvature, the indices of $\D^+(\zp)$ and $\D^+_{\alpha} (\zp)$ vanish by Lemma~\ref{L:vanish}, and it follows that $\tw_{\rep}(X, \D^+, g) = \tw_{\rep}(Y, \D, g^Y)$.
\end{proof}

\begin{remark}
It is standard in the field~\cite{gilkey:sphere} to extend the definition \eqref{E:tw} to the ring $R_0 (\pi_1 (Y))$ of virtual unitary representations of $\pi_1 (Y)$ of virtual dimension zero. Given $\rep \in R_0 (\pi_1 (Y))$ of the form $\rep = \rep_1 - \rep_2$, where $\rep_1$ and $\rep_2: \pi_1 (Y) \to U(k)$ are representations, one defines
\[
\tw_{\rep} (Y,B)\; =\; \tw_{\rep_1} (Y,B)\, -\, \tw_{\rep_2} (Y,B).
\]
Our definition of the periodic $\tw$--invariant can similarly be extended to the ring $R_0 (\pi_1 (X))$ of virtual unitary representations of $\pi_1 (X)$ of virtual dimension zero. All properties of the $\tw$--invariant hold in this extended setting.
\end{remark}


\section{Metrics of positive scalar curvature}\label{S:psc}
One of the main applications of the $\tw$--invariant of the spin Dirac operator on odd-dimensional manifolds has been to the study of Riemannian metrics of positive scalar curvature (PSC for short). In this section, we will use our periodic $\tw$--invariant to study PSC metrics on certain even-dimensional manifolds. 

Denote the space of PSC metrics on a manifold $M$ by $\rrp(M)$. The quotient of $\rrp(M)$ by the group of self-diffeomorphisms of $M$ is called the moduli space of PSC metrics on $M$ and is denoted by $\mmp(M)$. These spaces may be empty: obstructions to the existence of PSC metrics on a manifold arise both from index theory~\cite{lichnerowicz:spinors,hitchin:spinors} and from minimal surface arguments~\cite{schoen-yau:psc}. However, if a manifold $M$ admits a PSC metric, refinements of these techniques show that the spaces $\rrp(M)$ and $\mmp(M)$ are often disconnected and may have non-trivial higher homotopy groups.  An introduction to the area may be found in~\cite{lawson-michelson}, and the papers~\cite{rosenberg-stolz:psc,rosenberg:psc-progress} survey some more recent results.  

For certain odd-dimensional spin manifolds $Y$, one can show that $\rrp(Y)$ and $\mmp(Y)$ have infinitely many components using the $\tw$--invariants associated to the spin Dirac operator on $Y$. The main result of this section is that our periodic $\tw$--invariants can be used to a similar effect for a class of $4n$--dimensional manifolds $X$: we will show that the spaces $\rrp(X)$ and $\mmp(X)$ may have infinitely many components when $n>1$, and arbitrarily many components when $n = 1$. The proofs will differ somewhat for $n > 1$ and $n = 1$ but both will rely on the result of the following section. 


\subsection{Homotopies through PSC metrics}
Throughout this section, $X$ will be a closed spin manifold of dimension $4n$ with a choice of a primitive class $\gamma \in H^1 (X;\Z)$, and $\D^+$ will denote a chiral spin Dirac operator on $X$.

\begin{theorem}\label{T:homotopy}
Suppose that $g_0$ and $g_1$ are metrics on $X$ with positive scalar curvature, so that the invariants $\tw_{\alpha} (X,\D^+,g_0)$ and $\tw_{\alpha} (X,\D^+,g_1)$ are defined for  any $\rep \in R_0(\pi_1 (X))$.  If $g_0$ and $g_1$ are homotopic through metrics of positive scalar curvature then
\begin{equation}\label{E:equal-rho}
\tw_{\rep}(X,\D^+,g_1)\, =\, \tw_{\rep}(X,\D^+,g_0).
\end{equation}
Moreover, let $F: X \to X$ be a diffeomorphism preserving $\gamma$ and spin structure, and such that $F^*\rep = \rep$ up to conjugation. If $g_0$ and $F^*g_1$ are homotopic through metrics of positive scalar curvature, then~\eqref{E:equal-rho} still holds.
\end{theorem}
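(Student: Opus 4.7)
The plan is to realize the difference $\tw_\alpha(X,\D^+,g_1) - \tw_\alpha(X,\D^+,g_0)$ as a combination of $L^2$--indices of spin Dirac operators on a single end-periodic manifold with two ends, and then use the positive scalar curvature hypothesis to force those indices to vanish via the Lichnerowicz formula. Specifically, following the setup in the proof of Lemma~\ref{L:g-indep}, I would decompose $\xtilde = \xm \cup W_0 \cup \xp$ and put a lift of $g_0$ on $\xm$, a lift of $g_1$ on $\xp$, and an interpolating metric on $W_0$. The new input, coming from the hypothesized PSC homotopy $\{g_t\}$, is that the interpolating metric can be arranged so that the resulting end-periodic metric on $\xtilde$ has positive scalar curvature everywhere; this is a standard Gromov--Lawson style construction, combining openness of the PSC condition in the $C^2$--topology with a long-neck/warped-product trick along the path $g_t$.

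Next I would apply Theorem~\ref{T:non} to $\D^+(\xtilde)$ and to $\D^+_\alpha(\xtilde)$ separately, using $f_- = -f$ on $\xm$ and $f_+ = f$ on $\xp$. Since $\alpha$ is a flat unitary representation, $\ii(\D^+_\alpha(X)) = k\cdot\ii(\D^+(X))$, and a transgression form for the twisted operator can be taken to be $k$ times a transgression form for the untwisted one. Subtracting $k$ times the first index formula from the second, the integrals of local index forms and the $\int_Y\omega$, $\int_X df\wedge\omega$ contributions cancel in pairs, leaving
\[
\tw_\alpha(X,\D^+,g_1) - \tw_\alpha(X,\D^+,g_0) \;=\; k\cdot\ind_+\D^+(\xtilde) - \ind_+\D^+_\alpha(\xtilde).
\]
Because $g_0$ and $g_1$ have positive scalar curvature, the Lichnerowicz argument in Lemma~\ref{L:vanish} (adapted to the two-ended manifold $\xtilde$) shows that $\D^+(\xtilde)$ and $\D^+_\alpha(\xtilde)$ have trivial $L^2$--kernel and cokernel, and that the spectral sets of $\D^+_z(X)$ and $\D^+_{z\alpha}(X)$ avoid the unit circle for both $g_0$ and $g_1$, so that $h=0$ and $\ind_+$ coincides with the ordinary $L^2$--index. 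Thus the right-hand side vanishes, giving \eqref{E:equal-rho}.

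For the diffeomorphism-equivariant statement, I would invoke Remark~\ref{R:invariance}: since $F$ preserves $\gamma$, the spin structure, and the conjugacy class of $\alpha$, we obtain $\tw_\alpha(X,\D^+,g_1) = \tw_{F^*\alpha}(X,\D^+,F^*g_1) = \tw_\alpha(X,\D^+,F^*g_1)$. The hypothesis that $g_0$ and $F^*g_1$ are PSC--homotopic then reduces the problem to the first part of the theorem, which has already been established.

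The main obstacle I expect is the first step, namely the construction of a genuinely end-periodic PSC metric on $\xtilde$ that matches lifts of $g_0$ and $g_1$ on the two ends. The ingredients (openness of PSC, warped-product tricks along a path of PSC metrics) are all classical, but one must be careful that, after rolling up the two ends periodically, the metric remains smooth across every copy of $Y$ and no new regions of non-positive scalar curvature are introduced; the cancellation of local integrals in the index formula relies on this end-periodicity in an essential way.
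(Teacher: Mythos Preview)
Your overall strategy matches the paper's exactly: build an end-periodic PSC metric on $\xtilde$ with two ends modeled on $(X,g_0)$ and $(X,g_1)$, apply the index theorem to $\D^+$ and $\D^+_\alpha$ separately, subtract, and invoke Lemma~\ref{L:vanish} to kill the resulting indices; the diffeomorphism statement reduces to the first via Remark~\ref{R:invariance} just as you say.

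The one place where you are imprecise is the construction of the PSC interpolating metric, and you correctly flag this yourself. You write that the interpolation happens ``on $W_0$'' and appeal to a long-neck or warped-product argument. But on a single fixed fundamental domain there is no room to stretch, and the end-periodic structure forbids changing the metric on $W_0$ in a way that is not pulled back from $X$. The paper's resolution is to spread the interpolation over \emph{many} copies: one takes $N$ large, places the metric $g_{j/N}$ on $W_j$ for $0 \le j \le N$ (and $g_0$, $g_1$ on the respective ends), and blends consecutive metrics on small collars $U_j$ of the lifts of $Y$. Uniform continuity of $t \mapsto g_t$ in $C^\infty$ and continuity of the scalar-curvature functional then guarantee that for $N$ sufficiently large each blended metric on $U_j$ stays PSC, since it is $C^\infty$-close to $g_{j/N}$. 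This is the end-periodic substitute for the long-neck trick you had in mind, and with it your argument goes through verbatim.
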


\begin{proof}
We will treat the infinite cyclic cover $\xtilde \to X$ classified by $\gamma$ as an end-periodic manifold with two ends, as described in~\eqref{E:xtilde}. We claim that  for some $N > 0$, there exists a PSC metric $g$ on $\xtilde$ which is equal to $g_0$ on $\bigcup_{\,i \leq 0}\, W_i$ and to $g_1$ on $\bigcup_{\,i \geq N}\, W_i$.  Given this, the equality \eqref{E:equal-rho} of $\tw$--invariants will follow from Lemma~\ref{L:vanish} using a slight modification of the argument proving Lemma~\ref{L:g-indep}.

We now construct the metric $g$, starting with a family $g_t$, $0 \le t \le 1$, of PSC metrics on $X$ providing the homotopy between $g_0$ and $g_1$.  Fix an oriented, connected submanifold $Y \subset X$ whose homology class is dual to $\gamma$, and a lift of $Y$ to $\xtilde$ so that $Y$ is the right-hand boundary of $W_0$. Fix a smooth neighborhood $U = [-1,1] \times Y$ of $Y$ in $X$; the lift of $Y$ gives a lift of $U$ that we label $U_0$. The translates of $U_0$ by the covering translations $T^j$ will be labeled $U_j$; these overlap $W_j$ and $W_{j+1}$ as shown in Figure~\ref{F:xtilde}. Finally, fix a smooth function $\beta: U \to [0,1]$ such that $\beta = 0$ near $\{-1\} \times Y$ and  $\beta = 1$ near $\{1\} \times Y$; translation gives a similar function on each $U_j$.

\medskip

\begin{figure}[!ht]
  \begin{center}
    \labellist
    \normalsize\hair 0mm
    \pinlabel {{\footnotesize$g_{-1}$}} at 65 95
    \pinlabel {{\footnotesize$g_{0}$}} at 140 95
    \pinlabel {{\footnotesize$g_{1/N}$}} at 220 95
    \pinlabel {{\footnotesize$g_{2/N}\  \ldots\ldots$}} at 325 95
    \pinlabel {{\footnotesize$g_{1}$}} at 380 95
    \pinlabel {{\footnotesize$W_{-1}$}} at 65 -15
    \pinlabel {{\footnotesize$W_{0}$}} at 140 -15
    \pinlabel {{\footnotesize$W_1$}} at 220 -15
    \pinlabel {{\footnotesize$W_2\  \cdots\cdots$}} at 325 -15
    \pinlabel {{\footnotesize$W_N$}} at 380 -15
    \pinlabel {{\footnotesize$U_{0}$}} at 185 -10
    \pinlabel {{\footnotesize$U_1$}} at 265 -10
   \endlabellist
\includegraphics[scale=.65]{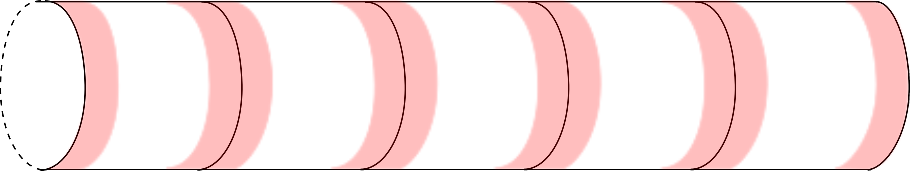}
  \end{center}
\caption{Decomposition of $\xtilde$}
\label{F:xtilde}
\end{figure}

For an arbitrary $N >0$, consider the end-periodic metric on $\xtilde$ defined as follows. Start with an initial metric $\tilde{g}^{\,N}$ defined on individual $W_j$ by 
\begin{equation}
\tilde{g}^{\,N} (x) =
\begin{cases}
 \; g_0(x) &\mbox{if } x\in W_j,\ j<0\\ 
 \; g_{j/N}(x) &\mbox{if } x\in W_j, \; 0 \leq j \leq N\\
 \; g_1(x) &\mbox{if } x\in W_j,\ j>N.\\
\end{cases}
\end{equation}
These of course do not match up on the intersections $W_{j-1} \cap W_j$, so we modify them by defining the metric $g^N$ to be 
\begin{equation}\label{E:Uj}
(1-\beta(u)) \cdot g_{\,j/N} (u)\, +\, \beta(u) \cdot g_{\,(j+1)/N} (u)
\end{equation}
on $U_j$ for $j = 0,\ldots,N-1$. The metric $g^N$ is then well defined as an end-periodic metric on $\xtilde$.

On the complement of $U_j$, the metric $g^N$ equals $g_t$ for some $t \in [0,1]$ hence has positive scalar curvature by hypothesis. We will establish that, for $N$ sufficiently large, the metric defined by~\eqref{E:Uj} has positive scalar curvature throughout. This follows from two straightforward facts. First, the path $g_t$ (restricted to $U$) is uniformly continuous as a function from $[0,1]$ to the space $\met(U)$ of Riemannian metrics on $U$ with the $\cc^\infty$ topology. Therefore, if $N$ is sufficiently large, the metric defined by~\eqref{E:Uj} will be in any prescribed neighborhood of $g_{\,j/N}$ on $U$.  Second, the minimum (over $U$) of the scalar curvature is a continuous function of the metric. By compactness of $X$, the minimum of the scalar curvature of $g_t$ is bounded away from $0$, and this implies that for $N$ surficiently large, the scalar curvature of $g^N$ is positive everywhere on $\xtilde$.

The second statement of the theorem (concerning diffeomorphisms) follows from the first after taking Remark~\ref{R:invariance} into account. 
\end{proof}


\subsection{The $4n$--dimensional case with $n > 1$}
Our result will be an even-dimensional version of \cite[Theorem 0.3]{botvinnik-gilkey:bordism}, which holds for odd-dimensional closed spin manifolds $Y$ with a non-trivial finite fundamental group $G$.

\begin{theorem}\label{T:infinite}
Let $Y$ be a closed connected spin manifold of dimension $4n-1$ with $n> 1$ and with a non-trivial finite fundamental group $G$, and let $M$ be a closed spin manifold of dimension $4n$. If  both $Y$ and $M$ admit metrics of positive scalar curvature then $\pi_0\,(\mmp((S^1 \times Y)\conn M))$ is infinite.
\end{theorem}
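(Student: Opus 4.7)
The plan is to reduce to the classical Botvinnik--Gilkey theorem for $Y$ by means of the periodic $\tw$--invariant on $X=(S^1\times Y)\conn M$. Since $\dim Y=4n-1\geq 7$, Botvinnik--Gilkey~\cite{botvinnik-gilkey:bordism} produce a virtual unitary representation $\alpha\in R_0(G)$ and a sequence of PSC metrics $g_1,g_2,\ldots$ on $Y$ such that the classical invariants $\tw_\alpha(Y,\D,g_i)$ are pairwise distinct. Fix a PSC metric $g_M$ on $M$, and use the Gromov--Lawson surgery construction to form PSC metrics $\tilde h_i$ on $X$ obtained from the product metric $d\theta^2+g_i$ on $S^1\times Y$ by connect-summing with $(M,g_M)$ at a fixed ball $B\subset S^1\times Y$ disjoint from a chosen cross-section $\{*\}\times Y$, using identical surgery data for every $i$. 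Extend $\alpha$ to a virtual representation of $\pi_1(X)=\Z\ast G\ast \pi_1(M)$ by declaring it trivial on the $\Z$ and $\pi_1(M)$ factors; by Lemma~\ref{L:vanish}, the PSC hypothesis guarantees that $\tw_\alpha(X,\D^+,\tilde h_i)$ is defined.

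The crux of the proof is the comparison
\[
\tw_\alpha(X,\D^+,\tilde h_i)\; =\; \tw_\alpha(Y,\D,g_i)\; +\; c,
\]
with $c$ independent of $i$. To establish this, I would apply the multi-ended version of Theorem~\ref{T:non} to a two-ended end-periodic manifold whose right end is modeled on the infinite cyclic cover of $X$ with metric $\tilde h_i$, whose left end is modeled on the infinite cyclic cover of $S^1\times Y$ with metric $d\theta^2+g_i$, and whose compact piece is essentially a copy of $M$ with a small ball removed, carrying a fixed PSC metric that extends the Gromov--Lawson neck on the $X$ side and is a product near the $S^1\times Y$ side; crucially, this compact piece is independent of $i$. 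The whole manifold then admits a global PSC metric, so the indices of $\D^+$ and $\D^+_\alpha$ both vanish by Lichnerowicz. Subtracting the index formula for $\D^+_\alpha$ from $k$ times the index formula for $\D^+$ cancels the local index forms and transgression terms; the contribution from the $S^1\times Y$--end is identified via Proposition~\ref{P:product-rho}(\ref{l:psc}) with $\tw_\alpha(Y,\D,g_i)$, while the $X$--end contributes $\tw_\alpha(X,\D^+,\tilde h_i)$, and the remaining terms are $i$--independent and absorb into $c$.

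With the comparison in hand, $\tw_\alpha(X,\D^+,\tilde h_i)$ takes infinitely many distinct values, so by Theorem~\ref{T:homotopy} the metrics $\tilde h_i$ lie in infinitely many distinct components of $\rrp(X)$. To pass to the moduli space, suppose $[\tilde h_i]=[\tilde h_j]$ in $\pi_0(\mmp(X))$, so that some $F\in\diff(X)$ satisfies $F^*\tilde h_j$ homotopic to $\tilde h_i$ through PSC metrics. By the second part of Theorem~\ref{T:homotopy} we then have $\tw_\alpha(X,\D^+,\tilde h_i)=\tw_{F^*\alpha}(X,\D^+,\tilde h_j)$. Because $\alpha$ factors through the finite group $G$, the pulled-back representation $F^*\alpha$ factors through a finite quotient of $\pi_1(X)$ of order bounded by $|G|$, and the set of conjugacy classes of such homomorphisms is finite. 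Consequently, for each fixed $j$ the set $\{\tw_{F^*\alpha}(X,\D^+,\tilde h_j):F\in\diff(X)\}$ is finite, so infinitely many $\tilde h_i$ must lie in distinct diffeomorphism orbits, proving that $\pi_0(\mmp(X))$ is infinite.

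The main obstacle is the identity in the second paragraph. The delicate points are, first, verifying that the compact interpolating piece of the two-ended manifold admits a PSC metric compatible with both necks in a way that is uniform in $i$, and second, carefully tracking which terms in the two-ended version of Theorem~\ref{T:non} cancel when the twisted and untwisted formulas are combined, so as to leave a clean $i$--independent constant. The diffeomorphism-finiteness argument in the final paragraph follows the template of Botvinnik--Gilkey and should be routine once the central comparison is in place.
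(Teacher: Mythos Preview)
Your overall strategy is correct and lands at the same conclusion as the paper, but you take an unnecessary detour for the central identity. The paper simply observes that since the connect-sum ball $B$ is chosen disjoint from a cross-section $\{*\}\times Y$, that copy of $Y$ retains a product neighborhood in $X$ on which the metric $\tilde h_i$ is literally $d\theta^2+g_i$. Both this metric and $g_i$ have positive scalar curvature, so Proposition~\ref{P:product-rho}(\ref{l:psc}) applies directly to $X$ and gives the exact equality
\[
\tw_\alpha(X,\D^+,\tilde h_i)\;=\;\tw_\alpha(Y,\D,g_i),
\]
with no undetermined constant $c$. Your two-ended construction is, in effect, a re-derivation of that proposition in this particular case. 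Note also that your description of the compact interpolating piece as ``$M$ with a small ball removed'' cannot be right as stated: its boundary would be a sphere, whereas the piece joining a cylindrical $\R\times Y$ end to an $\tilde X$-periodic end must have boundary consisting of copies of $Y$. In fact no nontrivial compact piece is needed at all once one cuts along the product cross-section; the proof of Proposition~\ref{P:product-rho} already packages this.

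Two further minor points. First, $\pi_1(X)=(\Z\times G)\ast\pi_1(M)$, not $\Z\ast G\ast\pi_1(M)$; this does not affect your extension of $\alpha$. Second, for the passage to the moduli space the paper argues, as Botvinnik--Gilkey do, via the finite index of the subgroup $H\subset\diff(X)$ preserving orientation, spin structure, and $\gamma$; your more explicit counting of conjugacy classes of $F^*\alpha$ is in the same spirit, but you should be aware that invoking Theorem~\ref{T:homotopy} already presupposes $F\in H$, so the finite-index step is not bypassed.
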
 

\begin{proof}
Start with the product metrics $d\theta^2 + g^Y_j$ on $S^1 \times Y$, where $g^Y_j$ is an infinite family of PSC metrics used in~\cite[Theorem 0.3]{botvinnik-gilkey:bordism} to prove the non-finiteness of $\pi_0(\mmp(Y))$ by showing that 
\[
\tw_{\rep}(Y,\D,g^Y_i)\; \neq\; \tw_{\rep}(Y,\D,g^Y_j)\quad \text{for}\quad i \neq j,
\]
for an explicitly constructed $\rep \in R_0(G)$. Note that the condition $r_m (G) > 0$ of that theorem is automatically satisfied because $m = 4n - 1 = 3\pmod 4$; see the remark following the statement of Theorem 0.1 in \cite{botvinnik-gilkey:bordism}.

Next, fix a PSC metric $g^M$ on $M$ and equip $X_M = (S^1 \times Y)\conn M$ with PSC metrics $g_j = (d\theta^2 + g^Y_j) \conn g^M$ constructed from PSC metrics on the two summands via connected sum using modifications supported in small neighborhoods of points in the summands. That this can be done follows from~\cite{gromov-lawson:psc,schoen-yau:psc}. Note that $Y$ continues to have a product neighborhood in $X_M$,
therefore, 
\[
\tw_{\rep}(X_M,\D^+,g_j)\; =\; \tw_{\rep}(Y,\D,g_j^Y)
\]

\smallskip\noindent
by part (2) of Proposition~\ref{P:product-rho}. Theorem~\ref{T:homotopy} then immediately implies that the metrics $g_j$ lie in different components of $\rrp(X_M)$, and indeed of $\rrp(X_M)/H$, where $H\subset \diff(X_M)$ is the group discussed in Remark~\ref{R:invariance}. Since $H$ is a subgroup of finite index, this implies as in Botvinnik--Gilkey~\cite{botvinnik-gilkey:bordism} that there are infinitely many path components in $\rrp(X_M)/\diff(X_M) = \mmp(X_M)$.
\end{proof}

\begin{remark}\label{R:break}
Note that the metrics $g^Y_j$ used in the above proof were constructed in \cite{botvinnik-gilkey:bordism} (based on \cite{miyazaki:psc,rosenberg:psc-novikov-II}) by pushing PSC metrics across a cobordism, which necessitates the hypothesis that $n>1$.
\end{remark}

\begin{remark}
Theorem~\ref{T:infinite} illustrates an important point about the periodic $\tw$--invariants.  By Proposition~\ref{P:product-rho}, when $Y \subset X$ has a metric product neighborhood, $\tw_{\rep}(X,\D^+,g)$ reduces to the classical $\tw$--invariant of the odd-dimensional manifold $Y$.  However, even if such a neighborhood is present for the PSC metrics $g_0$ and $g_1$, there is no reason it would be present for all PSC metrics in a homotopy $g_t$. Hence it is crucial for the proof of Theorem \ref{T:infinite} that we are able to define $\tw_{\rep}(X,\D^+,g)$ for arbitrary PSC metrics on $X$.
\end{remark}


\subsection{The $4$--dimensional case}\label{S:4d}
The proof of Theorem \ref{T:infinite} for higher dimensional manifolds does not extend to manifolds of dimension 4 due to the break down in the topological arguments used to push PSC metrics across a cobordism; see Remark \ref{R:break}. These arguments, which go back to \cite{gajer:cobordism,gromov-lawson:psc,schoen-yau:psc}, do not work for surgeries along spheres of co-dimension one or two, and these are often unavoidable when dealing with manifolds of dimensions 4 and 5. Indeed, Seiberg--Witten theory shows~\cite{witten:monopole} that there are $4$-manifolds that do not carry PSC metrics, but are cobordant to manifolds that do carry such metrics.  One dimension down, Ricci flow techniques show~\cite{marques:psc} that $\pi_0(\mmp(Y))$ vanishes for any $3$-manifold $Y$ carrying a PSC metric.  

In this section, we will adapt the argument for Theorem~\ref{T:infinite} to produce a class of orientable $4$-manifolds $X$ for which $\pi_0(\mmp(X))$ may be arbitrarily large.  Non-orientable $4$-manifolds having this property were constructed in~\cite{ruberman:swpos}.

\begin{theorem}\label{T:4-dim}
Let $Y$ be a closed connected oriented $3$-manifold with non-trivial finite fundamental group, and let $M$ be a closed spin $4$-manifold which admits a metric of positive scalar curvature.  Then, for any positive integer $N$, there exists $m_N$ such that  
$
\pi_0 \left(\mmp((S^1 \times Y)\conn\, m_N\cdot (\sss) \conn M\right)
$
has at least $N$ elements.
\end{theorem}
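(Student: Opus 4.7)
The plan is to adapt the strategy of Theorem~\ref{T:infinite} to the four-dimensional case. The obstruction is Marques' theorem: all PSC metrics on a closed three-manifold $Y$ are isotopic modulo diffeomorphism, so the direct analogue of using infinitely many components of $\mmp(Y)$ and then crossing with $S^1$ is unavailable. However, the arguments of Botvinnik--Gilkey \cite{botvinnik-gilkey:bordism} still produce, for any finite target $N$, a family of PSC metrics $h_1,\ldots,h_N$ on $Y$ and a virtual representation $\alpha\in R_0(G)$, where $G=\pi_1(Y)$, such that the classical $\tw$-invariants $\tw_\alpha(Y,\D,h_i)$ are pairwise distinct modulo $\Z$. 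These $h_i$ are all diffeomorphic to a fixed PSC metric $h_0$ via maps $\phi_i\in\diff(Y)$, but the $\phi_i$ act nontrivially on $\alpha$ via the $\mathrm{Out}(G)$-action on $R_0(G)$, which is precisely what permits the $\tw$-values to differ.

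Given this input, I would build the metrics $g_i$ on $X_m=(S^1\times Y)\conn m\cdot(\sss)\conn M$ by forming the product $d\theta^2+h_i$ on $S^1\times Y$ and then taking connected sums with fixed PSC metrics on $M$ and on $m$ copies of $\sss$ using Gromov--Lawson--Schoen--Yau. Applying Proposition~\ref{P:product-rho}(\ref{l:psc}) on a product neighborhood of $Y\subset X_m$ that is preserved by these connected sums gives
\[
\tw_\alpha(X_m,\D^+,g_i)\;\equiv\;\tw_\alpha(Y,\D,h_i)\pmod{\Z},
\]
so the $g_i$ realize $N$ distinct values of $\tw_\alpha$ modulo $\Z$. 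Theorem~\ref{T:homotopy} then places the $g_i$ in $N$ distinct path components of $\rrp(X_m)/H$, where $H\subset\diff(X_m)$ is the subgroup preserving $\gamma$, the spin structure, and the conjugacy class of $\alpha$.

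The main obstacle is descending from $\rrp(X_m)/H$ to $\mmp(X_m)=\rrp(X_m)/\diff(X_m)$. The diffeomorphism $\phi_i\times\id_{S^1}$, extended by the identity over $M$ and the $\sss$ summands, lies outside $H$ precisely because it moves $\alpha$, yet it identifies $g_i$ with $g_0$; so one cannot conclude distinctness in $\mmp(X_m)$ using $\tw_\alpha$ for a single $\alpha$. The correct diffeomorphism invariant is the orbit of $\tw_\alpha(X_m,\D^+,g_i)$ in $\R/\Z$ under the action of $\diff(X_m)$ on $R_0(\pi_1(X_m))$, which factors through a finite group of automorphisms of $G$ once the contribution from $\pi_1(M\conn m\cdot\sss)$ has been accounted for. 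The role of $m=m_N$ is to guarantee that, for our chosen $h_i$ and $\alpha$, these $N$ orbits in $\R/\Z$ remain pairwise disjoint. This is a finite group-theoretic condition, satisfiable for sufficiently large $m$ and an appropriately chosen $\alpha$, and it is the step that genuinely requires the hypothesis of non-trivial finite fundamental group. Verifying this orbit disjointness -- which has no analogue in Theorem~\ref{T:infinite} because there the $\tw$-values diverge in $\R$ rather than wrapping around $\R/\Z$ -- is the technical heart of the four-dimensional refinement.
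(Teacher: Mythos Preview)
Your approach has a genuine gap. The first step---producing PSC metrics $h_1,\ldots,h_N$ on the $3$-manifold $Y$ with pairwise distinct $\tw_\alpha$ modulo $\Z$ via Botvinnik--Gilkey---does not work: that construction pushes PSC metrics across cobordisms, which requires surgeries of codimension at least $3$ and hence ambient dimension at least $5$. This breakdown is exactly the content of Remark~\ref{R:break} and the opening paragraph of Section~\ref{S:4d}. In dimension $3$ there is no such mechanism; by Marques' theorem the only way two PSC metrics on $Y$ can have different $\tw_\alpha$ is if one is the pullback of the other by a diffeomorphism that moves $\alpha$. So your $h_i$ are forced to be $\phi_i^*h_0$ for diffeomorphisms $\phi_i$ of $Y$, as you yourself note. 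But then $\phi_i\times\id_{S^1}$ extends over the connected summands (the sums are taken at points, and one may isotope $\phi_i$ to fix a disc), so your metrics $g_i$ on $X_m$ are all diffeomorphic to $g_0$. They represent a \emph{single} point of $\mmp(X_m)$, and no invariant can separate them there. Your claim that taking $m=m_N$ large makes the $\mathrm{Out}(G)$-orbits of the $\tw$-values disjoint is unsupported and cannot be right: adding $S^2\times S^2$ summands changes neither $\pi_1$, nor the representation $\alpha$, nor the action of $\diff(X_m)$ on conjugacy classes of representations.

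The paper's argument is structurally different. Rather than varying the metric on $Y$, it varies the \emph{hypersurface}: for each $n$ it builds a PSC metric on $(S^1\times Y)\conn k_n\cdot(\sss)$ containing a product neighborhood of $(nd+1)\cdot Y$, so that the periodic $\tw$-invariant equals $(nd+1)\cdot\tw_\alpha(Y,\D,g^Y)$ as a \emph{real} number, not merely modulo $\Z$. The integer $d$ comes from the order of $[Y,\alpha]$ in $\Omega_3^{\spin}(B\pi_1 Y)$, which yields a spin cobordism $V_n$ from $(nd+1)\cdot Y$ to $Y$ over which $\alpha$ extends; crossing with $S^1$ gives a $5$-dimensional cobordism $W_n$, and handle-trading (Lemma~\ref{L:handles}) reduces it to $2$- and $3$-handles. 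The $3$-handles, read from the $S^1\times Y$ side, are $2$-handles on null-homotopic circles and produce the $k_n$ copies of $\sss$; the remaining $2$-handles (from the $(nd+1)\cdot Y$ side) allow Gajer's construction to push the PSC metric across. Thus $m_N=\max\{k_1,\ldots,k_N\}$ records how many $\sss$ summands are needed to accommodate all $N$ cobordisms---it has nothing to do with separating orbits in $\R/\Z$.
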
 

Before we go on to prove Theorem \ref{T:4-dim}, we need to review some basic handlebody theory~\cite{rourke-sanderson:book}.

\begin{lemma}\label{L:handles} 
Let $n \geq 4$, and suppose that $(W,X_0,X_1)$ is an $(n+1)$-dimensional cobordism with connected $X_0$ and $X_1$. Assume that the map $\pi_1 (X_0) \to \pi_1(W)$ is a surjection and the map $\pi_1 (X_1) \to \pi_1(W)$ is an isomorphism. Then $W$ has a handle decomposition relative to $X_0$ such that  
\begin{enumerate}
\item there are  no handles of index $0$ or $1$,\label{I:01}
\item there are no handles of index $n+1$ or $n$, and \label{I:n+1}
\item for each handle $h$ of index $n-1$, its belt sphere (or attaching circle for $h$ viewed as a  $2$-handle relative to $X_1$) is null-homotopic in $X_1$.\label{I:n-1}
\end{enumerate}
\end{lemma}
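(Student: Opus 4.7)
The plan is to apply classical handle-theoretic techniques (cf.~\cite{rourke-sanderson:book}), bootstrapped from a standard handle-trading argument that requires the dimensional hypothesis $n \geq 4$. Starting from any handle decomposition of $W$ relative to $X_0$, I would first cancel each $0$-handle against a $1$-handle, which is always possible because $X_0$ is connected, and then trade every remaining $1$-handle for a $3$-handle. The trading step is the classical one: introduce a cancelling $2$-$3$ pair, slide the $1$-handle over the new $2$-handle so that they geometrically cancel, and leave a $3$-handle in its place. This uses surjectivity of $\pi_1(X_0) \to \pi_1(W)$, which guarantees that the $\pi_1$-generator contributed by the $1$-handle is already in the image from $X_0$, and it requires $n+1 \geq 5$.

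For (\ref{I:n+1}) the plan is to dualize. Turning the decomposition upside down lets me view $W$ as a cobordism from $X_1$ to $X_0$, with each $k$-handle becoming an $(n+1-k)$-handle. The isomorphism hypothesis on $\pi_1(X_1) \to \pi_1(W)$ supplies the surjectivity needed to run the very same procedure as in~(\ref{I:01}) in the dual setting, eliminating $0$- and $1$-handles in the dual, equivalently $(n+1)$- and $n$-handles in the original. After dualizing back, the remaining handles in the decomposition of $W$ rel $X_0$ have indices $2,3,\ldots,n-1$.

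Part~(\ref{I:n-1}) then follows from a short fundamental-group argument in the dual decomposition. Each $(n-1)$-handle of the original corresponds to a $2$-handle whose attaching circle in $X_1$ is precisely the belt sphere in question. Let $W'$ denote $X_1$ with all these dual $2$-handles attached; the remaining dual handles have index $\geq 3$ and therefore do not affect $\pi_1$, so $\pi_1(W) = \pi_1(W') = \pi_1(X_1)/N$, where $N$ is the normal closure of the homotopy classes of the attaching circles. The hypothesis that $\pi_1(X_1) \to \pi_1(W)$ is an isomorphism forces $N$ to be trivial; and the normal closure of a set is trivial only when every generator itself is trivial, so each attaching circle is null-homotopic in $X_1$.

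I expect the main obstacle to be the $1$-handle trading in~(\ref{I:01}), where the dimensional restriction enters and the actual handle-slide bookkeeping has to be done with care; the elimination of $(n+1)$- and $n$-handles is then a formal duality, and~(\ref{I:n-1}) drops out of the $\pi_1$-isomorphism without any further geometry once the decomposition has been pruned to the desired range of indices.
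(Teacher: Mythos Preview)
Your argument is correct and follows the same approach as the paper: cancel $0$-handles, trade $1$-handles for $3$-handles using surjectivity of $\pi_1(X_0)\to\pi_1(W)$, then dualize and repeat using the $\pi_1(X_1)\to\pi_1(W)$ isomorphism to handle items~(\ref{I:n+1}) and~(\ref{I:n-1}). The paper places the dimensional restriction $n\ge 4$ slightly differently---it observes that the dual trading must not reintroduce $1$-handles in the original picture (i.e.\ one needs $n-2\ge 2$)---but this is the same constraint you invoke for the trading step, and your conclusion that the surviving indices lie in $\{2,\ldots,n-1\}$ already encodes it.
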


\begin{proof}
Item (1) is standard: $0$-handles are canceled by $1$-handles, and one can use the surjectivity of $\pi_1(X_0) \to \pi_1(W)$ to trade $1$-handles for $3$-handles as in~\cite[Lemma 6.15]{rourke-sanderson:book}. Turning over the handle structure yields item (2); note that one needs $n\geq 4$ to ensure that this step does not introduce any new $1$-handles. If item (3) failed to hold, the inclusion $\pi_1(X_1) \to \pi_1(W)$ would have non-trivial kernel.
\end{proof}

We also need a simple fact about the non-triviality of $\tw_\alpha(Y,\D,g^Y)$ in the $3$-dimensional case.  Although such calculations are well-known to experts in the field, we could not find the precise statement in the literature so we supply a quick proof here.

\begin{lemma}\label{L:space-form}
Let $Y$ be a $3$-dimensional spherical space form, with a spherical metric $g^Y$ (i.e. pushed down from $S^3$). Then for any spin structure on $Y$, there is a representation $\alpha: \pi_1(Y) \to U(k)$ such that $\tw_\alpha(Y,\D,g^Y) \neq 0$.
\end{lemma}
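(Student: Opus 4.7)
The plan is to exploit the homogeneity of $(Y,g^Y)$. Writing $Y = S^3/G$ with $G \subset \operatorname{Spin}(4)$ a finite subgroup acting freely on $S^3$ (a choice of spin structure on $Y$ amounts to a choice of such lift of $G \subset SO(4)$), and using that $g^Y$ has positive scalar curvature, I would first invoke the Lichnerowicz formula to conclude $\ker \D = \ker \D_\alpha = 0$ for every unitary $\alpha$, so that $h(\D)=h(\D_\alpha)=0$ and
\[
\tw_\alpha(Y,\D,g^Y)\;=\;\tfrac{1}{2}\bigl(\eta_{\D_\alpha}(Y) - k\cdot \eta_\D(Y)\bigr).
\]
The task thus becomes finding an honest unitary $\alpha$ making the right-hand side non-zero.

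Next, I would use the standard decomposition of $L^2(Y,\Sm \otimes E_\alpha)$ into isotypic components of the $G$-action on $L^2(S^3,\Sm) \otimes \C^k$ to obtain the covering-space formula
\[
\eta_{\D_\alpha}(Y) - k\cdot \eta_\D(Y)\;=\;\frac{1}{|G|}\sum_{g\in G\setminus\{e\}}(\chi_\alpha(g)-k)\,\eta_g(\D(S^3)),
\]
in which the $g=e$ contributions cancel (note that $\eta_\D(S^3)=0$ by the symmetry of the Dirac spectrum on round $S^3$). Since the characters of honest unitary representations span all class functions on $G$, it then suffices to show that the class function $g \mapsto \eta_g(\D(S^3))$ is not identically zero on $G\setminus\{e\}$; once we know this, we can find a virtual dimension-zero representation against which it pairs non-trivially, and any such virtual representation is a difference of honest representations, so at least one of them yields $\tw_\alpha \neq 0$.

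For this last and most delicate point, I would extend the $G$-action from $S^3$ to $D^4$ and apply the $G$-equivariant Atiyah--Patodi--Singer theorem of \cite{aps:II, aps:III} to $\D^+(D^4)$. Since $g$ acts freely on $S^3$, the origin is the only fixed point of $g$ on $D^4$, so the formula expresses $\eta_g(\D(S^3))$ as an explicit trigonometric function of the two rotation angles $\theta_1(g),\theta_2(g)\in (0,2\pi)$ of $g$ at the origin. I expect the main obstacle to be correctly bookkeeping the sign conventions and the spin lifts in this equivariant fixed-point contribution; once that is pinned down, non-vanishing of the resulting expression for even a single $g\neq e$ is an elementary trigonometric check and completes the argument.
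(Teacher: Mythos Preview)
Your approach is correct but genuinely different from the paper's. The paper never touches the equivariant $\eta$--invariant on $S^3$: instead it first quotes Gilkey's explicit formula for $\tw_{\alpha_1}(L(p,q),\D,g)$ for the generating character $\alpha_1$ on a lens space (getting a value that is visibly nonzero modulo~$\Z$), and then reduces an arbitrary spherical space form $Y$ to a lens space cover $L(p,q)\to Y$ via the induced--representation identity $\tw_{\operatorname{ind}\alpha_1}(Y)=\tw_{\alpha_1}(L(p,q))$ from Gilkey's book. That is a two--line proof once one is willing to cite the literature.

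Your route is essentially the derivation that underlies Gilkey's formula: you pass to $S^3$ and the Donnelly/equivariant APS fixed--point expression, which for $g\ne e$ with rotation angles $\theta_1(g),\theta_2(g)\in(0,2\pi)$ gives $\eta_g(\D(S^3))$ as $\pm\bigl(2\sin(\theta_1/2)\cdot 2\sin(\theta_2/2)\bigr)^{-1}$, hence nonzero for every nontrivial $g$. Your reduction from ``some virtual $\alpha$ of virtual dimension zero'' to ``some honest $\alpha$'' is fine: writing $\alpha=\alpha_1-\alpha_2$ with $\dim\alpha_1=\dim\alpha_2$, nonvanishing of $\tw_{\alpha_1}-\tw_{\alpha_2}$ forces one of $\tw_{\alpha_1},\tw_{\alpha_2}$ to be nonzero. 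Two small points worth tightening: the parenthetical about $\eta_{\D}(S^3)=0$ is unnecessary since the $g=e$ term already drops out from $\chi_\alpha(e)-k=0$; and when you invoke equivariant APS on $D^4$ you need a $G$--invariant metric that is a product near $\partial D^4$ and for which the $g$--equivariant index vanishes (arranging PSC on $D^4$, or citing Donnelly's direct fixed--point formula for $\eta_g$, handles this). The payoff of your approach is that it is self--contained and treats all spherical space forms uniformly; the paper's approach is shorter but leans entirely on \cite{gilkey:eta-odd,gilkey:sphere}.
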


\begin{proof}
We will first prove this for lens spaces $L(p,q)$. Let $p > 1$ be an odd integer and view $L(p,q)$ as the quotient of the unit sphere $S^3 \subset \mathbb C^2$ by the cyclic group action
\[
t(z_1,z_2) = (e^{2 \pi i/p} z_1,e^{2 \pi i q/p} z_2).
\] 
Let $\alpha_1: \pi_1 (L(p,q)) \to U(1)$ be the representation sending $t$ to $e^{2 \pi i/p}$. Then, for the unique spin structure and the spherical metric $g$, we have
\[
\tw_{\alpha_1}(L(p,q),\D,g)\; =\; - (d/p) \cdot (p + 1)/2 \pmod \Z,
\]
where $d$ is a certain integer relatively prime to $48\,p$, see \cite[Theorem 2.5]{gilkey:eta-odd}. One can easily see that $\tw_{\alpha_1}(L(p,q),\D,g)$ is never zero modulo the integers, and hence it is not zero as a real number. The same theorem tells us that, for $L(2,1) = {\R}\rm{P}^3$, the invariants $\tw_{\alpha_1}(L(2,1),\D,g)$ are equal to $\pm 1/4 \pmod \Z$, depending on the $\spin$ structure.

Any other spherical space form $Y$ is finitely covered by a lens space $L(p,q)$ with a spin structure pulled back from $Y$. Let $\alpha$ be the representation of $\pi_1(Y)$ induced by the representation $\alpha_1$ on the finite index subgroup $\pi_1(L(p,q)) \subset \pi_1 (Y)$.  Then, according to~\cite[Lemma 2.5.6]{gilkey:sphere}, we have
$$
\tw_{\alpha}(Y,\D,g^Y)\; =\; \tw_{\alpha_1}(L(p,q),\D,g),
$$
which is not zero.
\end{proof}


\begin{proof}[Proof of Theorem~\ref{T:4-dim}]
Being a spherical space form, the manifold $Y$ admits a metric $g^Y$ of positive constant sectional curvature. Fix a spin structure on $Y$ and choose, via Lemma~\ref{L:space-form}, a representation $\alpha: \pi_1(Y) \to U(k)$ with $\tw_\alpha(Y,\D,g^Y) \neq 0$. Via the connected sum construction, the metric $g^Y$ gives rise to a PSC metric $g_0$ on the connected sum $m\cdot Y$ for any integer $m \ge 1$. Positive scalar curvature metrics proving the theorem will be constructed by pushing the product metric $d\theta^2 + g_0$ on $S^1 \times m\cdot Y$ across a carefully chosen cobordism, and using periodic $\tw$--invariants associated with $\alpha$ to distinguish their moduli.

We begin our construction of the cobordism with the observation that, due to the finiteness of $\pi_1 (Y)$, the spin cobordism group $\Omega_3^{\spin}(B\pi_1(Y))$ is finite, hence there is a positive integer $d$ annihilating its every element. It then follows that, for any $n$, there is a spin cobordism $V_n$ and a representation $\tilde\alpha: \pi_1(V_n) \to U(k)$ such that
\[
\p\, (V_n,\tilde\alpha)\; =\; (Y,\alpha)\; -\; (nd +1)\cdot (Y,\alpha),
\]
where $r\cdot (Y,\alpha)$ stands for the connected sum of $r$ copies of $Y$ with representation $\alpha$ on each summand. One may further assume, after killing the kernel of the map $\pi_1 (V_n) \to \pi_1 (Y)$ by surgery on some circles in $V_n$, that the inclusion of $Y$ into $V_n$ induces an isomorphism on the fundamental groups, and that the same is true for each summand of $(nd + 1)\cdot Y$.

Next, endow $S^1$ with a non-bounding spin structure and consider the spin cobordism $W_n = S^1 \times V_n$ 
with boundary
\[
\p\,W_n\; = \; S^1 \times Y\; - \; S^1 \times (nd + 1)\cdot Y.
\]
Since this cobordism satisfies the hypotheses of Lemma~\ref{L:handles}, we will assume that $W_n$ has a handle decomposition with only $2$- and $3$-handles. Let $k_n$ be the number of the $3$-handles, and view them as $2$-handles relative to $S^1 \times Y$.  By item \eqref{I:n-1} of Lemma \ref{L:handles}, the attaching maps of these $2$-handles are null-homotopic in $S^1 \times Y$, so the result of adding them is a spin cobordism from $(S^1 \times Y) \conn\, k_n\cdot (\sss)$ to $S^1\times Y$. Note that we are taking the connected sum with $\sss$ rather than the non-trivial $S^2$-bundle over $S^2$, which is not a spin manifold.
  
Attaching the original $2$-handles of $W_n$ to $S^1 \times (nd + 1)\cdot Y$ then results in a spin cobordism $U_n$ with boundary 
\[
\p\, U_n\; = \; (S^1 \times Y) \conn\, k_n\cdot (\sss)\; - \; S^1 \times (nd + 1)\cdot Y.
\]
One can easily check that the inclusion of $(S^1 \times Y) \conn\, k_n\cdot (\sss)$ into $U_n$ induces an isomorphism on the fundamental groups, and so there is a surjection $\phi: \pi_1(U_n) \to \Z$ whose restriction to the other boundary component of $U_n$ is the obvious projection $\pi_1(S^1 \times (nd + 1)\cdot Y) \to \pi_1(S^1) =  \Z$. Under this projection, the attaching circle of any $2$-handle must map to zero because $H_1(U_n;\Q) = \Q$. It then follows that this attaching circle is homotopic to a curve in $(nd + 1)\cdot Y$. Since homotopy implies isotopy in dimension four, we may assume that all of the attaching circles of the $2$-handles of $U_n$ (relative to $S^1 \times (nd + 1)\cdot Y$) live outside of some product region $I \times (nd + 1)\cdot Y$. Since $U_n$ has only $2$-handles, the construction of Gajer~\cite{gajer:cobordism} gives it a PSC metric, which extends the product metric on $S^1 \times (nd + 1)\cdot Y$ and which is a product metric near both boundary components. The construction involves modifying the metric on $S^1 \times (nd + 1)\cdot Y$ in some neighborhood of the attaching maps of the $2$-handles, and hence does not affect the metric in the aforementioned product region. 

Let $g$ be the metric obtained by restricting this metric to $(S^1 \times Y) \conn k_n\cdot (\sss)$. It has positive scalar curvature and also contains a product neighborhood of $(nd + 1)\cdot Y$. Using part \eqref{l:psc} of Proposition~\ref{P:product-rho} and formula \eqref{E:rho-prod}, we calculate
\[
\tw_\alpha((S^1 \times Y) \conn k_n\cdot (\sss),\D^+,g)\; =\; \tw_\alpha\,((nd + 1)\cdot Y,\D,g_0).
\]
To calculate the latter $\tw$--invariant, consider a standard spin cobordism between the disjoint union of $nd + 1$ copies of $Y$ and the connected sum $(nd + 1)\cdot Y$ obtained by adding $1$-handles to the disjoint union. According to Gajer~\cite{gajer:cobordism}, this cobordism has a metric of positive scalar curvature which restricts to a product metric near its boundary. The representation $\rep$ extends over this cobordism, hence it follows from~\cite{aps:II} that
\[
\tw_{\rep} ((nd + 1)\cdot Y,\D,g)\; =\; (nd + 1)\cdot \tw_{\rep} (Y,\D,g^Y).
\]
Since $\sss$ has a metric of positive scalar curvature, the connected sum construction gives us a natural PSC metric on $(S^1 \times Y) \conn k\cdot(\sss)$ with
\[
\tw_\alpha((S^1 \times Y) \conn k \cdot (\sss),\D^+,g)\; = \; (nd + 1)\cdot \tw_{\rep}\,(Y,\D,g^Y)
\]
 for any $k \geq k_n$. These $\tw$--invariants are distinct for different $n$ because $\tw_{\rep}\,(Y,\D,g^Y)\neq 0$.
  
To complete the proof of the theorem, let $N$ be a positive integer, and let $m_N$ be the maximum of  $\{k_n\, |\, n=1,\ldots,N\}$. Then by adding more copies of $\sss$ if necessary to the manifolds $(S^1 \times Y) \conn k_n\cdot (\sss)$, we obtain metrics on $(S^1 \times Y) \conn m_N\cdot (\sss)$ with $\tw$--invariants 
\[
(nd + 1)\cdot \tw_\alpha(Y,\D,g^Y),\quad n=1,\ldots,N.
\]
 As in the proof of Theorem~\ref{T:infinite}, these metrics are distinct up to homotopy, even after connected sum with an arbitrary spin manifold $M$ carrying a PSC metric. 
\end{proof}


\section{Heat kernel estimates}\label{S:estimates}
Let $M$ be a Riemannian manifold of dimension $n$, and $\Sm$ a Dirac bundle with associated Dirac operator $\D = \D(M)$.  In this section, we derive estimates on the smoothing kernel of the operator $\exp(-t\D^2)$, as well as estimates on its derivatives.  These were needed in the proof of Theorem~\ref{T:aps} but were postponed for the sake of the exposition.  In the first two subsections, we make only the assumption (cf. Roe~\cite[Section 2]{roe1}) that the pair $(M;\Sm)$ has bounded geometry.  This means that the injectivity radius of $M$ is bounded from below, and that the norm of its curvature tensor (and its covariant derivatives) is bounded from above. Similarly, the curvature of the Clifford connection (along with its covariant derivatives) on $\Sm$ has bounded norm. Of course, end-periodic manifolds and end-periodic Dirac operators satisfy these conditions. The last three subsections will be specific to operators on end-periodic manifolds. We will work most of the time with full Dirac operators; their chiral counterparts can be treated in similar fashion.

Heat kernel estimates similar to ours but in the context of relative index theory can be found in~\cite{bunke:relative}; see also~\cite{donnelly:spectrum} and~\cite{cheng-li-yau:kernel} for the case of scalar Laplacian. We have chosen to give a full treatment here because we need stronger results regarding the behavior of heat kernels at
$t \to \infty$ than the relative index theory provides; in addition, there are essential differences with~\cite{bunke:relative} in how we arrive at our estimates, including our use of gradient estimates and of Taubes' trick~\cite{taubes:spectral} for converting estimates for the scalar heat kernel to such estimates for more general operators.


\subsection{Smoothing kernels}\label{S:smooth}
The paper of Roe~\cite{roe1} explains the basic analytical properties of Dirac operators $\D = \D(M)$ that hold whenever $(M;\Sm)$ has bounded geometry. Most important for us is the construction of the smoothing kernel for operators of the form $h(\D)$ for $h$ a rapidly decaying function.   We briefly summarize some properties we will use, referring to~\cite{roe1} for more details.  

Let $h: \R \to \R$ be a continuous function with the property that for each integer $k \ge 0$ there exists a constant $C_k$ such that $|h(s)| \le C_k\,(1 + |s|)^{-k}$. Then the operator $h(\D)$ defined by the spectral theorem can be represented by its smoothing kernel $K(x,y)$ as in \eqref{E:h(D)}. With respect to topologies described on pages 93--94 of \cite{roe1}, the map that associates its smoothing kernel to such an operator is continuous; see Proposition 2.9 of \cite{roe1}. In particular, the operators $\D^m\,\exp(-t \D^2)$ are represented by such smoothing kernels for all $t > 0$ and $m \ge 0$. 


\subsection{Estimates for the kernel of $\exp(-t \D^2)$}
In this subsection, we establish estimates, valid whenever $(M;\Sm)$ has bounded geometry, for the smoothing kernel of the operator $\exp(-t \D^2)$. This kernel will be denoted $K = K(t;x,y)$ and viewed as a smooth section of the bundle $\Hom (\pi_R^* S,\pi_L^* S)$ over $(0,\infty) \times M \times M$. As a function of $t$ and $x$ with $y$ fixed, it solves the initial value problem
\begin{equation}\label{E:one}
\left(\ddt + \D^2\right) \,K = 0,\quad \lim_{t\to 0}\; K\;=\;
\delta_y\cdot\I,
\end{equation}
where $\mathbb I$ is the identity automorphism. 

We begin with short-term Gaussian estimates on $K$ and its derivatives.

\begin{proposition}
Let $(M,\Sm)$ have bounded geometry and let $K(t;x,y)$ be the smoothing kernel 
of the operator $\exp (-t\D^2)$ on $M$. Then for any $T > 0$, there is a 
positive constant $C$ such that
\begin{equation}\label{E:short}
\left|\frac{\p^i}{\p t^i}\,\nabla_x^j\,\nabla_y^k\; K(t;x,y)\right|\; \le\; 
C\,t^{-n/2 - i - |j| - |k|}\,e^{-d^2(x,y)/4t},
\end{equation}
for all $t \in (0,T\,]$. Here, $j$ and $k$ are multi-indices, and the constant 
$C$ only depends on $T$.
\end{proposition}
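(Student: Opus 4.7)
The plan is to derive \eqref{E:short} by combining finite propagation speed of the wave operator $\cos(s\D)$ with the standard short-time parametrix for the heat equation, using the bounded geometry of $(M;\Sm)$ to ensure all constants are uniform in $x$ and $y$.

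First I would exploit the Fourier representation
\[
e^{-t\D^2} \;=\; \frac{1}{2\sqrt{\pi t}}\int_{-\infty}^\infty e^{-s^2/4t}\,\cos(s\D)\,ds,
\]
together with the classical fact that the Schwartz kernel of $\cos(s\D)$ is supported in the set $\{(x,y)\mid d(x,y)\le |s|\}$. This is precisely where the Gaussian factor $e^{-d(x,y)^2/4t}$ in \eqref{E:short} comes from: at a point $(x,y)$ with $d(x,y)=r$, only the portion $|s|\ge r$ of the integral contributes, and the remaining tail $\int_{|s|\ge r} e^{-s^2/4t}\,ds$ produces the Gaussian factor. The on-diagonal power $t^{-n/2}$ is extracted from the standard Hadamard parametrix $(4\pi t)^{-n/2}e^{-d(x,y)^2/4t}\sum_j t^j\,u_j(x,y)$, whose coefficients $u_j$ satisfy uniform bounds under bounded geometry since their transport equations involve only curvature tensors and their covariant derivatives, all controlled globally.

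For the spatial derivatives $\nabla_x^j\nabla_y^k$ I would differentiate under the integral sign. Since $\nabla_x^j\nabla_y^k\cos(s\D)$ retains the support property of its kernel in $\{d(x,y)\le|s|\}$, the same splitting at $|s|=d(x,y)$ gives the Gaussian factor, while the polynomial factors in $s/\sqrt{t}$ produced by differentiation integrate against $e^{-s^2/4t}$ to yield the extra $t^{-|j|-|k|}$. To convert the $L^2$ operator bounds naturally furnished by the wave-equation argument into the pointwise bound \eqref{E:short}, I would apply Sobolev embedding on geodesic balls of radius less than the injectivity radius, with Sobolev constants chosen uniformly by bounded geometry. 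The time derivatives then follow from the heat equation \eqref{E:one}: since $\p_t K = -\D_x^2 K = -\D_y^2 K$, each $\p_t$ can be traded for two additional spatial derivatives, producing the factor $t^{-i}$.

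The main technical obstacle is ensuring genuine uniformity of all constants in $x,y\in M$. This reduces to several parallel uniformity statements: a uniform lower bound on the injectivity radius (so normal coordinate patches have a common size), uniform bounds on the parametrix coefficients $u_j$ and their covariant derivatives, uniform Sobolev embedding constants on balls of fixed radius, and uniform remainder estimates in a Duhamel iteration used to pass from the parametrix to the true kernel. Each of these is standard once bounded geometry is assumed, but one must assemble them carefully; an alternative route, in the spirit of Taubes~\cite{taubes:spectral}, would transfer scalar heat-kernel Gaussian estimates to $\exp(-t\D^2)$ via the Bochner--Lichnerowicz identity, using that the zeroth-order curvature term is uniformly bounded under our hypotheses.
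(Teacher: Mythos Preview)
Your proposal is correct and in fact supplies considerably more detail than the paper, which simply defers the argument to Donnelly~\cite{don1,don2}, where the analogous scalar heat-kernel estimates are obtained by the classical Hadamard parametrix method. Your route through finite propagation speed of $\cos(s\D)$ is the Cheeger--Gromov--Taylor approach and has the virtue of making the origin of the Gaussian factor $e^{-d^2(x,y)/4t}$ completely transparent; Donnelly instead extracts both the power $t^{-n/2}$ and the Gaussian decay directly from the parametrix and its remainder analysis. Both arguments depend on bounded geometry in the same essential way---uniform injectivity radius and curvature bounds give uniform constants---so the two are largely interchangeable here.

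One small imprecision in your sketch: applying spatial covariant derivatives $\nabla_x^j$, $\nabla_y^k$ to the wave representation does not literally produce polynomial factors in $s$ (that happens for powers of $\D$, not of $\nabla$). The cleanest way to handle the spatial derivatives is either to differentiate the parametrix directly, as Donnelly does, or to use the semigroup factorization $e^{-t\D^2}=e^{-(t/2)\D^2}\circ e^{-(t/2)\D^2}$ together with local elliptic estimates on balls of uniform radius to trade Sobolev control for covariant derivatives. Your closing alternative---transferring scalar Gaussian bounds via the Bochner formula and the maximum principle---is in fact exactly what the paper uses for the all-time estimate of Proposition~\ref{P:longterm}, rather than for this short-time bound.
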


\begin{proof}
The proof is essentially the same as that for scalar heat kernels; see for 
instance Donnelly \cite[Section 3]{don1} and Donnelly \cite[Section 4]{don2}.
\end{proof}

Note that, for any $m \ge 0$, the smoothing kernels of $\D^m \exp(-t \D^2)$ are of the form $\D^m K(t;x,y)$, where $\D$ is the Dirac operator acting on the $x$--variable. In particular, we see that estimates similar to \eqref{E:short} hold as well for the smoothing kernels of the operators $\D^m \exp(-t \D^2)$ and their chiral versions.

We turn next to Gaussian estimates on $|K(t;x,y)|$, valid for all $t > 0$, using the well known results of Li and Yau~\cite[Corollary 3.1]{li-yau} on the scalar heat kernel and Taubes' trick~\cite[Proposition 2.1]{taubes:spectral}. More precisely, we will prove the following.

\begin{proposition}\label{P:longterm}
Let $(M;\Sm)$ have bounded geometry and let $K(t;x,y)$ be the smoothing 
kernel of the operator $\exp (-t\D^2)$ on $M$. There exist positive constants 
$\alpha$, $\gamma$, and $C$ such that 
\begin{equation}\label{E:long}
|K(t;x,y)|\;\le\;C\,e^{\alpha t}\,t^{-n/2} e^{-\gamma d^2(x,y)/t}\quad
\text{for all\; $t > 0$}.
\end{equation}
\end{proposition}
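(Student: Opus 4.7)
The plan is to combine the Bochner--Lichnerowicz formula with a Kato-type inequality to reduce the estimate to the corresponding estimate for the scalar heat kernel, which is due to Li--Yau. This is essentially the content of Taubes' trick.

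First, I would invoke the Weitzenb\"ock formula $\D^2 = \nabla^*\nabla + \mathcal R$, where the zeroth-order curvature endomorphism $\mathcal R$ is uniformly bounded, say $|\mathcal R|\le M$, because $(M,\Sm)$ has bounded geometry. Next, for any smooth compactly supported spinor $\phi_0$, let $\phi(t,x) = (e^{-t\D^2}\phi_0)(x)$, so that $(\partial_t + \D^2)\phi = 0$. A short calculation using the Weitzenb\"ock formula and Kato's inequality $|\nabla|\phi||\le|\nabla\phi|$ yields, pointwise where $|\phi|>0$,
\[
(\partial_t - \Delta)\,|\phi|\;\le\; M\,|\phi|,
\]
so that $u(t,x) := e^{-Mt}\,|\phi(t,x)|$ is a nonnegative subsolution of the scalar heat equation $(\partial_t-\Delta)u\le 0$ on $M$. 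The fact that $|\phi|$ may vanish is handled in the standard way by replacing $|\phi|$ with $(|\phi|^2+\varepsilon)^{1/2}$ and letting $\varepsilon\to 0$.

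Second, because $M$ has bounded geometry, it is complete and its Ricci curvature is bounded below. Hence the Li--Yau upper bound applies to the minimal scalar heat kernel $p(t;x,y)$, giving positive constants $C,\gamma,\alpha_0$ with
\[
p(t;x,y)\;\le\; C\,e^{\alpha_0 t}\,t^{-n/2}\,e^{-\gamma\,d^2(x,y)/t}
\quad\text{for all } t>0.
\]
(The factor $e^{\alpha_0 t}$ absorbs the shift coming from a negative Ricci lower bound and can be taken to be $1$ if $\operatorname{Ric}\ge 0$.) By the maximum principle on manifolds of bounded geometry, any nonnegative subsolution $u$ of the heat equation with initial datum $u_0\ge 0$ of sufficient decay satisfies $u(t,x)\le \int_M p(t;x,z)\,u_0(z)\,dz$. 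Applied to our $u$, this gives
\[
|\phi(t,x)|\;\le\; e^{Mt}\int_M p(t;x,z)\,|\phi_0(z)|\,dz.
\]

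Third, I would pass to the kernel by an approximation argument. Fix $v\in\Sm_y$ and approximate $v\,\delta_y$ by smooth compactly supported spinors $\phi_0^{(\varepsilon)}$ concentrating at $y$; for any $t>0$, both $\phi^{(\varepsilon)}(t,x)\to K(t;x,y)v$ and $\int p(t;x,z)|\phi_0^{(\varepsilon)}(z)|\,dz\to |v|\,p(t;x,y)$ uniformly on compacta. The resulting inequality $|K(t;x,y)v|\le e^{Mt}\,|v|\,p(t;x,y)$ for all $v$ gives the operator-norm bound
\[
|K(t;x,y)|\;\le\; e^{Mt}\,p(t;x,y),
\]
which combined with the Li--Yau estimate yields \eqref{E:long} with $\alpha=M+\alpha_0$.

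The main technical obstacle is justifying the comparison principle: on a noncompact manifold, maximum-principle arguments require some control at infinity, and one has to check that the subsolution $u=e^{-Mt}|\phi|$ has the right growth/integrability for the Li--Yau representation to be valid. This is where bounded geometry enters in a serious way, since it guarantees completeness plus the volume-doubling and on-diagonal scalar heat-kernel bounds needed for a parabolic maximum principle in the class of solutions obtained from compactly supported initial data. The other minor technical points --- the Kato-inequality regularization and the delta-function approximation --- are essentially standard.
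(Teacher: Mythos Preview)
Your proposal is correct and follows essentially the same route as the paper: Bochner--Weitzenb\"ock plus Kato's inequality to reduce to a scalar heat subsolution, then the maximum principle and the Li--Yau bound on the scalar heat kernel. The only cosmetic difference is that the paper applies the argument directly to $|K(t;x,y)|$ as a function of $x$ with $y$ fixed and compares $e^{-\alpha t}|K| - \kappa H$ to zero (justifying the maximum principle via the short-time Gaussian estimates, which force decay at spatial infinity for each fixed $t$), whereas you work with smooth compactly supported initial data and recover the kernel bound by a $\delta$-approximation at the end; both are standard ways to handle the noncompactness.
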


The rest of this subsection will be dedicated to the proof of this proposition. We begin with the generalized Bochner formula~\cite[Chapter II,\S 8]{lawson-michelson},
\[
\D^2 = \nabla^* \nabla + \rr,
\]
where $\rr$ is defined in terms of the curvature of the connection on $\Sm$. In the special case when $M$ is a spin manifold and $\D$ the spin Dirac operator, $\rr$ is just $1/4$ times the scalar curvature of $M$. Plugging this into \eqref{E:one}, 
we obtain 
\[
\frac{\p K}{\p t} + \nabla^* \nabla (K) + \rr \cdot K = 0
\]
and 
\begin{equation}\label{E:two}
\left(\frac{\p K}{\p t}\,, K\right) + (\nabla^* \nabla (K),K) + 
(\rr\cdot K,K) = 0.
\end{equation}

\smallskip\noindent
Here, the parentheses stand for the fiberwise inner product on the vector bundle $\Hom (\pi_R^* S,\pi_L^* S)$. Let $\Delta = d^* d =  - *d*d$ be the scalar Laplace operator.

\begin{lemma}\label{L:one}
For any section $s$ of a Euclidean bundle with the compatible connection $\nabla$ one has 
\[
\Delta (|s|^2)\, =\, 2 (\nabla^* \nabla s,s) - 2 |\nabla s\,|^2.
\]
\end{lemma}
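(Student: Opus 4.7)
The plan is to reduce to a pointwise computation at an arbitrary point $p \in M$ by choosing a local orthonormal frame $\{e_i\}$ that is synchronous at $p$, i.e.\ $\nabla_{e_i} e_j = 0$ at $p$. In such a frame, the scalar Laplacian $\Delta = d^*d$ acts on a function $f$ by $\Delta f = -\sum_i e_i(e_i f)$ at $p$, and the connection Laplacian acts by $\nabla^*\nabla s = -\sum_i \nabla_{e_i}\nabla_{e_i} s$ at $p$. Reducing to this frame is the one mildly non-trivial step, but it is entirely standard.

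First I would use the compatibility of $\nabla$ with the metric on the bundle, which gives $e_i(s,s) = 2(\nabla_{e_i} s, s)$ as a pointwise identity. Differentiating a second time and using compatibility once more yields
\[
e_i(e_i(s,s)) = 2(\nabla_{e_i}\nabla_{e_i} s, s) + 2(\nabla_{e_i} s, \nabla_{e_i} s)
\]
at $p$. Summing over $i$ and inserting the synchronous-frame expressions for $\Delta$ and $\nabla^*\nabla$ immediately gives
\[
\Delta(|s|^2) = -2\sum_i(\nabla_{e_i}\nabla_{e_i} s, s) - 2\sum_i |\nabla_{e_i} s|^2 = 2(\nabla^*\nabla s, s) - 2|\nabla s|^2
\]
at $p$. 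Since $p$ was arbitrary and both sides are independent of the choice of frame, the identity holds globally.

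There is no real obstacle here; the identity is a standard Bochner-type pointwise manipulation, and the only subtlety is being careful with the sign convention $\Delta = d^*d$ so that both Laplacians have compatible signs. The result will subsequently feed into the maximum-principle argument used to establish the long-time Gaussian estimate \eqref{E:long} via Taubes' trick, so the form in which the identity is stated (with the $-2|\nabla s|^2$ term on the right) is exactly what is needed to bound $|K|^2$ in terms of a scalar sub-solution of the heat equation.
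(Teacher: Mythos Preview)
Your proof is correct and is the standard synchronous-frame computation for this Bochner-type identity. The paper does not actually supply a proof of this lemma; it is stated as a known fact and used immediately, so there is nothing to compare against beyond noting that your argument is exactly the routine verification one would expect.
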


\noindent
Using Lemma \ref{L:one} with $s = K$, the formula \eqref{E:two} can easily be converted into 
\[
\frac 1 2\cdot\frac {\p\,|K|^2}{\p t} + \frac 1 2\, \Delta (|K|^2) + 
|\nabla (K)|^2 + (\rr\cdot K,K)  = 0,
\]
which after another application of Lemma \ref{L:one} with $s = |K|$ becomes
\begin{equation}\label{E:three}
|K|\cdot \frac{\p\,|K|}{\p t} + |K|\cdot\Delta(|K|) - 
|\,d|K|\,|^2 + |\nabla K|^2 +  (\rr\cdot K,K)= 0.
\end{equation}

\begin{lemma}\label{L:two}
For any section $s$ of a Euclidean vector bundle with the compatible connection $\nabla$ one has $|\,d|s|\,| \le |\nabla s|$.
\end{lemma}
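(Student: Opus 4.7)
My plan is to prove this pointwise inequality, often called the Kato inequality, by exploiting the compatibility of $\nabla$ with the fiber metric and then applying Cauchy--Schwarz. The key identity, valid since $\nabla$ is metric-compatible, is
\[
d(|s|^2) = 2\,\langle \nabla s, s\rangle,
\]
where the right-hand side denotes the $1$-form $X \mapsto \langle \nabla_X s, s\rangle$. I will first work on the open set $U = \{x : s(x) \ne 0\}$, where $|s|$ is smooth, and then handle the zero set separately.

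On $U$, rewriting the left-hand side as $d(|s|^2) = 2|s|\,d|s|$ and dividing by $2|s|$ gives
\[
d|s|(X) \;=\; \frac{\langle \nabla_X s, s\rangle}{|s|}.
\]
Applying the fiberwise Cauchy--Schwarz inequality $|\langle \nabla_X s, s\rangle| \le |\nabla_X s|\cdot |s|$ then yields $|d|s|(X)| \le |\nabla_X s|$ for every tangent vector $X$, and taking the supremum over unit vectors $X$ gives $|d|s|\,| \le |\nabla s|$ on $U$.

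It remains to deal with points where $s$ vanishes, where $|s|$ is only Lipschitz and may not be differentiable. Here I would note that $|s|$ attains a local minimum at any zero of $s$, so at any zero $x$ at which $d|s|$ exists in the classical sense one necessarily has $d|s|(x) = 0$, and the inequality is trivially satisfied. More robustly, since $|s|$ is locally Lipschitz on $M$, Rademacher's theorem guarantees that $d|s|$ exists almost everywhere, and the bound $|d|s|\,| \le |\nabla s|$ holds almost everywhere; this is the sense in which the inequality is used in the subsequent estimate \eqref{E:three} when combined with $\Delta$. Alternatively, one may replace $|s|$ by $\sqrt{|s|^2 + \varepsilon}$, which is smooth, derive the corresponding inequality $|d\sqrt{|s|^2+\varepsilon}\,| \le |\nabla s|\cdot |s|/\sqrt{|s|^2+\varepsilon} \le |\nabla s|$ by the same Cauchy--Schwarz argument, and let $\varepsilon \to 0$. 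The only genuine subtlety is the behavior at zeros of $s$, and in the applications of this lemma in the paper it is enough to have the inequality on the open dense set $U$ together with the Lipschitz regularity of $|s|$.
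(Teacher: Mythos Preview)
Your proof is correct and is the standard argument for Kato's inequality. The paper itself does not give a proof at all: it simply identifies the statement as Kato's inequality and cites Taubes \cite{taubes:stable}, so your write-up supplies the details the paper omits, via exactly the expected route (metric compatibility plus Cauchy--Schwarz, with the usual $\sqrt{|s|^2+\varepsilon}$ regularization to handle the zero set).
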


\begin{proof}
This is known as Kato's inequality; see for instance formula (3.3) in Taubes 
\cite{taubes:stable}.
\end{proof}

\begin{proof}[Proof of Proposition~\ref{P:longterm}] Together, Lemma \ref{L:two} and formula \eqref{E:three} yield the differential inequality
\[
\frac{\p\,|K|}{\p t} + \Delta(|K|)\; \le\;  \|\rr\|\;|K|.
\]
Since $(M;\Sm)$ has bounded geometry, the curvature operator $\rr$ is bounded, and hence there is a constant $\alpha \ge 0$ such that
\[
\frac{\p\,|K|}{\p t} + \Delta(|K|)\; \le\; \alpha\,|K|.
\]

For any fixed $y \in M$, let us consider the function $h(t,x) = e^{-\alpha t}
\,|K|$. A straightforward calculation shows that $h$ satisfies the 
differential inequality
\[
\frac{\p h}{\p t} + \Delta (h)\; \le\; 0
\]
with the initial condition 
\[
\lim_{t \to 0}\; h\, =\, \lim_{t \to 0}\; |K|\, =\, |\delta_y\cdot \mathbb I| = 
\kappa\cdot \delta_y
\]
for some positive constant $\kappa$. Let $H(t;x,y)$ be the scalar heat kernel, that is, the smoothing kernel of the operator $\exp(-t \Delta)$. As a function of $t$ and $x$ with $y$ fixed, $H$ solves the initial value problem 
\[
\frac{\p H}{\p t} + \Delta (H) = 0,\quad \lim_{t\to 0}\; H\;=\;\delta_y.
\]
Then the difference $k = h - \kappa\cdot H$, as a function of $t$ and $x$, 
solves the initial value problem
\[
\frac{\p k}{\p t} + \Delta (k) \le 0,\quad \lim_{t\to 0}\; k\;=\;0.
\]

\medskip
The maximum principle can be applied to $k$ even though the manifold $M$ is 
not compact because estimates \eqref{E:short} and similar estimates for 
$H(t;x,y)$ ensure that, for any fixed $t$ and $y$, the function $k$ 
approaches zero when $x$ runs off to infinity. The maximum principle
implies that $k (t,x) \le 0$ for all $t > 0$ and $x \in M$, which of course 
translates into the inequality
\[
|K(t;x,y)|\;\le\;\kappa e^{\alpha t}\cdot H(t;x,y).
\]
Now the Gaussian estimates on the scalar heat kernel $H(t;x,y)$ found in 
\cite[Corollary 3.1]{li-yau} complete the proof. 
\end{proof}


\subsection{Long-term derivative estimates}
The results of this subsection are specific to the periodic manifold 
$\tilde X$ and do not necessarily extend to general manifolds of bounded 
geometry.  

\begin{proposition}
Let $\tilde K(t;x,y)$ be the smoothing kernel of the operator $\exp(-t \D^2)$
on $\tilde X$. There exist positive constants $\alpha$, $\gamma$, and $C$ 
such that 
\begin{equation}\label{E:grad}
|\nabla \tilde K(t;x,y)|\;\le\;C\,e^{\alpha t}\,t^{-n/2-1} e^{-\gamma d^2(x,y)/t}
\quad\text{for all\; $t > 0$}.
\end{equation}
\end{proposition}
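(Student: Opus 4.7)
The plan is to derive the gradient bound \eqref{E:grad} from the short-time derivative estimate \eqref{E:short} and the long-term pointwise estimate \eqref{E:long} via the semigroup identity for the heat kernel, followed by a Gaussian convolution estimate that exploits the bounded geometry and subexponential volume growth of $\tilde X$.

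For $0 < t \le T_0$ with a fixed threshold $T_0 > 0$, the bound \eqref{E:grad} is immediate from \eqref{E:short} with $|j| = 1$, $i = k = 0$, after absorbing the bounded factor $e^{\alpha t}$ into the constant. For $t > T_0$, set $s = T_0/2$ and use the semigroup identity
\[
\tilde K(t;x,y)\; =\; \int_{\tilde X}\; \tilde K(s;x,z)\;\tilde K(t-s;z,y)\;dz,
\]
which follows from $\exp(-t\D^2) = \exp(-s\D^2) \circ \exp(-(t-s)\D^2)$ and the composition of smoothing kernels. Differentiating in $x$ under the integral and applying \eqref{E:short} to $\nabla_x \tilde K(s;x,z)$ (with $s$ fixed, so that $s^{-n/2-1}$ becomes a harmless constant) together with Proposition \ref{P:longterm} to $\tilde K(t-s;z,y)$ reduces the problem to bounding
\[
I(t;x,y)\; =\; \int_{\tilde X}\; e^{-d(x,z)^2/(4s)}\cdot e^{-\gamma\, d(z,y)^2/(t-s)}\; dz
\]
by a suitable multiple of $e^{-\gamma' d(x,y)^2/t}$.

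To estimate $I(t;x,y)$, I would split the domain into the regions where $d(x,z) \ge d(x,y)/2$ and where $d(z,y) \ge d(x,y)/2$; by the triangle inequality these regions cover $\tilde X$. In the first region the first Gaussian contributes a factor $e^{-d(x,y)^2/(32s)}$ with $s$ constant, which beats $e^{-\gamma' d(x,y)^2/t}$ for all $t \ge T_0$, provided $\gamma'$ is chosen small. In the second region the second Gaussian contributes $e^{-\gamma d(x,y)^2/(4(t-s))} \le e^{-\gamma d(x,y)^2/(8t)}$, using that $t-s \ge t/2$. The leftover Gaussian in $z$ integrates to a finite constant on $\tilde X$ because $\tilde X$, being a $\Z$-cover of the compact manifold $X$, has at most linear volume growth, which is dominated by any Gaussian in $z$.

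Assembling the pieces yields $|\nabla_x \tilde K(t;x,y)| \le C e^{\alpha t} e^{-\gamma' d(x,y)^2/t}$ for $t > T_0$. The missing polynomial factor $t^{-n/2-1}$ in \eqref{E:grad} is then absorbed into $e^{\alpha t}$ at the cost of slightly enlarging $\alpha$, since $t^{n/2+1}$ is dominated by $e^{\epsilon t}$ for any $\epsilon > 0$ once $t$ is bounded below. I expect the main obstacle to be the Gaussian convolution estimate, specifically obtaining the correct $1/t$ scaling in the exponent; the region-splitting trick is what makes this work without having to complete the square on a curved manifold.
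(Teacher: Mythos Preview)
Your argument is correct and takes a genuinely different route from the paper's. The paper proceeds indirectly: it first bounds the \emph{time} derivative $\partial_t\tilde K$ by repeating the Taubes-trick/maximum-principle comparison with the scalar heat kernel, invokes the Davies--Grigor'yan bounds on $\partial_t H$, and then appeals to results of Coulhon--Duong and Dungey to pass from a pointwise estimate on $\partial_t\tilde K$ to a weighted $L^2$ gradient bound and finally to the pointwise gradient bound \eqref{E:grad}. Your approach, by contrast, is entirely self-contained: it uses only the short-time derivative estimate \eqref{E:short} and the long-time pointwise bound \eqref{E:long} already established in the paper, glued together by the semigroup identity with a fixed split time $s=T_0/2$, followed by the elementary triangle-inequality splitting of the Gaussian convolution. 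What your method buys is simplicity and independence from the external references; in fact, since a fixed-width Gaussian $e^{-d(x,z)^2/(8s)}$ is integrable against at-most-exponential volume growth, your argument works on any manifold of bounded geometry, not just the periodic cover. One small slip: the inequality $e^{-\gamma d(x,y)^2/(4(t-s))}\le e^{-\gamma d(x,y)^2/(8t)}$ follows from $t-s\le 2t$ (equivalently $8t\ge 4(t-s)$), not from $t-s\ge t/2$ as you wrote; the latter would give the reverse inequality. The conclusion is unaffected.
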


\begin{proof} 
Differentiate equation \eqref{E:one} with respect to $t$ to conclude that 
$\tilde K' = \p \tilde K/\p t$ solves the equation
\[
\left(\frac {\p}{\p t} + \D^2\right) \tilde K' = 0
\]
with the initial condition 
\[
\lim_{t\to 0}\, \tilde K'\; = \; - \lim_{t \to 0}\, \D^2\,\tilde K\; = \; 
- \D^2\,(\delta_y\cdot\I)\; = \; - (\Delta\,\delta_y)\cdot \I.
\]
Similarly, the time derivative $\tilde H'$ of the scalar heat kernel 
$\tilde H$ on $\tilde X$ solves the initial value problem
\[
\frac {\p \tilde H'}{\p t} + \Delta (\tilde H') = 0,\quad \lim_{t \to 0}\, 
\tilde H'\; =\; - \Delta\,\delta_y.
\]
The argument of the previous section can now be applied to the time 
derivatives of $\tilde K$ and $\tilde H$ to deduce that 
\[
|\tilde K'(t;x,y)|\;\le\;\kappa e^{\alpha t}\,\tilde H'(t;x,y)
\]
for some positive constants $\kappa$ and $\alpha$. The Gaussian estimates on the time derivatives of $\tilde H$; see \cite[Theorem 3]{davies} and also \cite{grigoryan} ensure that there exist positive constants $\gamma$ and $C$ such that  
\[
|\tilde K' (t;x,y)|\;\le\;C\,e^{\alpha t}\,t^{-n/2 -1}e^{-\gamma d^2(x,y)/t}
\quad\text{for all\; $t > 0$}.
\]
Using this estimate, one can argue as in \cite[Lemma 2.3]{coulhon-duong} 
that there are positive constants $\beta$ and $C$ such that
\[
\int_{\tilde X} e^{\,\beta\,d^2(x,y)/t}|\nabla \tilde K(t;x,y)|^2\,dx\;\le\;
C\,e^{2\alpha t}\,t^{-n/2 - 1}.
\]
With this weighted $L^2$--estimate in place, one can  follow the argument 
of \cite{dungey} to derive the pointwise estimates \eqref{E:grad}. The 
caveat is that both \cite{coulhon-duong} and \cite{dungey} deal with 
scalar heat kernels but the aforementioned arguments go through with 
little change to cover the case of $\tilde K(t;x,y)$.
\end{proof}


\subsection{On-diagonal estimates}
Let $\zp = Z\,\cup\,\txp$ be a manifold with periodic end, where $\txp = W_0\,\cup\,W_1\,\cup\ldots$. Let $K(t;x,y)$ and $\tilde K(t;x,y)$ be the smoothing kernels of the operators $\exp(-t \D^2)$ on, respectively, $\zp$ and $\tilde X$. This subsection is devoted to the proof of the following result. 

\begin{proposition}\label{P:diff}
There are positive constants $\alpha$, $\gamma$ and $C$ such that, for 
all $t > 0$ and all $x \in W_k$ with $k \ge 1$, one has
\[
|K(t;x,x) - \tilde K(t;x,x)|\;\le\; C\,e^{\,\alpha t}\,e^{-\gamma\,d^2(x,W_0)/t}.
\]
\end{proposition}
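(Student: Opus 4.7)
The strategy will be to mimic a classical parametrix/Duhamel comparison: use a spatial cutoff supported near $x$ to treat $\tilde K$ as an approximate heat kernel on $\zp$, then use Duhamel's formula to express the difference $\tilde K(t;x,x) - K(t;x,x)$ as an integral whose integrand has support away from $x$, so the Gaussian estimates \eqref{E:long} and \eqref{E:grad} produce the desired decay in $d(x,W_0)^2/t$. Set $r=d(x,W_0)$; the case $r\le 1$ is trivial since both $K(t;x,x)$ and $\tilde K(t;x,x)$ have the same leading short-time asymptotics on isometric neighborhoods and are each bounded by $Ce^{\alpha t}$ for large $t$ via \eqref{E:long}, so I will assume $r\ge 1$.

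First I would choose a cutoff $\phi\in C^\infty(\zp)$ with $\phi\equiv 1$ on $B(x,r/3)$, $\supp\phi\subset B(x,2r/3)$, and $|d\phi|\le C/r$, $|\nabla d\phi|\le C/r^2$ (constructible from a smoothed distance function using bounded geometry of $\zp$). Since $d(x,Z)\ge d(x,W_0)=r$, the ball $B(x,2r/3)$ lies in $\txp$, so $\phi\tilde K(t;\cdot,x)$ is well-defined on $\zp$ by extending by zero. On $\supp\phi$ the Dirac bundles of $\zp$ and $\tilde X$ agree, so $u(t,y)=\phi(y)\tilde K(t;y,x)$ satisfies
\[
(\p_t+\D^2)u=[\D^2,\phi]\,\tilde K,\qquad u(0,\cdot)=\phi\,\delta_x=\delta_x,
\]
and Duhamel's principle gives, upon evaluating at $y=x$,
\[
\tilde K(t;x,x)-K(t;x,x)=\int_0^t\!\!\int_{\supp d\phi} K(t-s;x,z)\,[\D^2,\phi]_z\tilde K(s;z,x)\,dz\,ds.
\]

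Next, since $[\D^2,\phi]$ is first order with coefficients bounded by $|d\phi|$ and $|\nabla d\phi|$, I would combine the bound
\[
|[\D^2,\phi]_z\tilde K(s;z,x)|\;\le\; Ce^{\alpha s}\bigl(r^{-1}s^{-n/2-1}+r^{-2}s^{-n/2}\bigr)e^{-\gamma d(z,x)^2/s}
\]
(from \eqref{E:long} and \eqref{E:grad}) with the Gaussian bound \eqref{E:long} on $K$, using the fact that $d(x,z)\ge r/3$ on $\supp d\phi$. The key computational device is the elementary splitting $e^{-\gamma d^2/s}\le e^{-\gamma d^2/(2t)}\,e^{-\gamma d^2/(2s)}$ valid for $s\le t$ (and its symmetric version on the $t-s$ factor), which extracts the target exponential $e^{-\gamma'r^2/t}$ while leaving behind an integrable remainder. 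The $s$-integral then converges (substitute $u=r^2/s$) with a bound $Cr^{-n}$, the volume of $\supp d\phi$ contributes $O(r^n)$, and polynomial factors in $r$ and $t$ can be absorbed into a slightly weaker Gaussian using $r^at^{-b}e^{-cr^2/t}\le C_{a,b}\,e^{-c'r^2/t}$ for $r\ge 1$. Multiplying by $e^{\alpha(t-s)}\cdot e^{\alpha s}=e^{\alpha t}$ yields the claimed bound.

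The hard part will be the arithmetic of combining the heat-kernel-style factors in Step 3: the integrand has integrable-but-singular behavior at both $s=0$ and $s=t$, and one must carefully balance the $(t-s)^{-n/2}$ and $s^{-n/2-1}$ factors against the two Gaussian exponentials to extract a clean $e^{-\gamma r^2/t}$ uniformly in $t\in(0,\infty)$. A secondary technical point is verifying that the gradient estimate \eqref{E:grad} suffices—we only need it for $\tilde K$ (not $K$), which is precisely how it is stated—and that the cutoff $\phi$ exists with the required uniform derivative bounds, which follows from bounded geometry of $\zp$.
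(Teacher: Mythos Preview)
Your Duhamel argument is correct, and it does prove the proposition, but it follows a genuinely different route from the paper's.

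The paper does not use an $x$-dependent cutoff at scale $r=d(x,W_0)$. Instead it fixes, once and for all, cutoff functions $\phi_j,\psi_j$ supported in $W_0$ and builds an approximate kernel $K^a$ by gluing the heat kernel $K_1$ on the closed double $DZ$ with $\tilde K$. The error $E=-(\partial_t+\D^2)K^a$ is then supported in a \emph{fixed} compact set $N\subset W_0$ lying at distance $\ge\epsilon>0$ from $\partial W_0$. For $x\in W_k$, $k\ge 1$, one has $K^a(t;x,x)=\tilde K(t;x,x)$, and Duhamel gives
\[
K(t;x,x)-\tilde K(t;x,x)=\int_0^t\!\int_N K(s;x,z)\,E(t-s;z,x)\,dz\,ds.
\]
Because $d(x,z)^2\ge \epsilon^2+d(x,W_0)^2$ with $\epsilon$ \emph{fixed}, the factors $s^{-n/2}e^{-\gamma\epsilon^2/s}$ and $(t-s)^{-n/2-1}e^{-\gamma\epsilon^2/(t-s)}$ are bounded by absolute constants, which kills all the short-time singularities in one stroke; the remaining exponentials combine via $1/s+1/(t-s)\ge 1/t$. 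No powers of $r$ ever appear, no volume estimate on an $r$-annulus is needed, and no case split on $r$ is required.

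Your approach trades this for an $x$-dependent cutoff, which is the standard ``localized comparison'' technique and works, but forces you into the bookkeeping you describe: balancing $r^{-n}$ against the annulus volume, absorbing stray polynomial factors into the Gaussian, and handling $r\le 1$ separately. One small caution: your dismissal of the case $r\le 1$ is not quite complete as stated---``same leading short-time asymptotics'' does not by itself bound $|K-\tilde K|$ for small $t$, since each kernel is $\sim t^{-n/2}$. What you actually need is that $\zp$ and $\tilde X$ are isometric on a ball of \emph{fixed} radius (at least the width of $W_0$) around any $x\in W_k$, $k\ge 1$, so the standard comparison bound $|K-\tilde K|\le Ce^{-cw^2/t}$ applies; alternatively just run your Duhamel argument with the cutoff at a fixed scale in this regime.
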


\begin{proof} The proof will rely on the construction of the heat kernel on $\zp$ via the Duhamel principle, cf. \cite[Section 22C]{BW} in the product end case. We will use the intersection $(Z\,\cup\,W_0)\,\cap\,\txp = W_0$ as the gluing region for patching the heat kernel on $DZ = Z\,\cup\, W_0\,\cup\, (-W_0)\,\cup\, (-Z)$ with that on $\tilde X$. 

Let $h: \tilde X \to \R$ be a smooth function such that $h(x+1) = h(x) + 1$,
$h(W_0) \subset [0,1]$ and $h$ equals zero on $\p_- W_0 = -Y$ and one on 
$\p_+ W_0 = Y$. The restriction of $h$ to $\txp$ is nowhere negative; 
we extend it to a smooth function on $\zp = Z\,\cup\,\txp$ (called again 
$h$) so that it is negative on the interior of $Z$. For any real numbers 
$a < b$, let $\rho_{a,b}$ be an increasing smooth function of real variable 
$u$ such that
\[
\rho_{a,b}\, (u) = 
\begin{cases} 
\;0, & \quad\text{for $u \le a$}, \\
\;1, & \quad\text{for $u \ge b$}.
\end{cases}
\]
Define smooth cut-off functions $\phi_1, \phi_2, \psi_1$, $\psi_2: \zp \to [0,1]$ by defining them first on $W_0$ by the formulas 
\begin{alignat*}{2}
&\phi_1 = 1 - \rho_{5/7,6/7} \circ h, &\qquad &\phi_2 = \rho_{1/7,2/7} \circ h, \\
&\psi_1 = 1 - \rho_{3/7,4/7} \circ h, &\qquad &\psi_2 = 1 - \psi_1,
\end{alignat*}
and then extending to the entire $\zp$ by 0 or 1 in an obvious way; see the schematic picture below. The functions $\psi_1$ and $\psi_2$ form a partition of unity subordinate to the open covering $\zp = \{\,h(x) < 5/7\,\}\,\cup\,\{\,h(x) > 2/7\,\}$. In addition, $\phi_j = 1$ on $\supp \psi_j$ and the distance between $h(\supp \nabla\phi_j)$ and $h(\supp \psi_j)$ is no less than $1/7$ for both $j = 1$ and $j = 2$.

\bigskip

\begin{figure}[!ht]
\centering
\psfrag{V}{$V$}
\psfrag{V1}{$V_1$}
\psfrag{V2}{$V_2$}
\psfrag{U}{$W_0$}
\psfrag{U1}{}
\psfrag{U2}{}
\psfrag{psi1}{$\psi_1$}
\psfrag{psi2}{$\psi_2$}
\psfrag{phi2}{$\phi_2$}
\psfrag{phi1}{$\phi_1$}
\psfrag{Z}{$Z$}
\psfrag{Y}{}
\psfrag{W0}{}
\psfrag{W1}{$W_1$}
\includegraphics[scale=1.0]{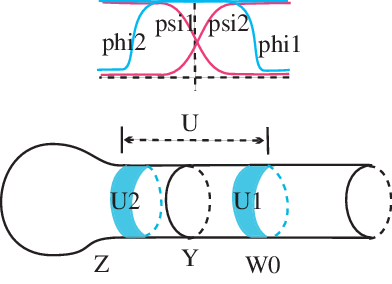}
\caption{Bump functions}
\label{F:bump}
\end{figure}

Let $K_1 (t;x,y)$ and $K_2 (t;x,y) = \tilde K(t;x,y)$ be smoothing 
kernels of the operators $\exp(-t \D^2)$ on $DZ$ and $\tilde X$, 
respectively. Define an approximate smoothing kernel $K^a (t;x,y)$ of 
$\exp(-t \D^2)$ on $\zp$ by the formula
\begin{equation}\label{E:approx}
K^a (t;x,y) = \sum_{j = 1}^2\;\phi_j (x)\,K_j(t;x,y)\,\psi_j (y).
\end{equation}
Note that $K^a (t;x,y) = K_1 (t;x,y)$ when $(x,y) \in Z\times Z$ and $K^a (t;x,y) = K_2 (t;x,y)$ when $(x,y) \in (\txp - W_0)\times (\txp - W_0)$ and also that $K^a (t;x,y) = 0$ when $(x,y) \in Z \times (\txp - W_0)$ or $(x,y) \in (\txp - W_0)\times Z$. In addition, $\lim K^a (t;x,y) = \delta_y (x)\cdot \mathbb I$ when $t \to 0$ for all $(x,y)$. This means that $K^a$ solves the initial value problem \eqref{E:one} for most $(x,y) \in \zp \times \zp$. To be precise, let us consider the error term induced by the approximate smoothing kernel $K^a (t;x,y)$ on $\zp$,
\begin{equation}\label{E:error}
- E(t;x,y) = \left(\ddt + \D^2\right) K^a (t;x,y), 
\end{equation}
where the operator $\D$ acts on the $x$-variable for any fixed $t$ and $y$. 

\begin{lemma}
The error term $E(t;x,y)$ vanishes unless $h(x) \in (1/7, 6/7)$ and the distance between $h(x)$ and $h(y)$ is greater than $1/7$. There are positive constants $\alpha$, $\gamma$ and $C$ such that, for all $t > 0$ and all $x, y \in \zp$, the following estimate holds
\begin{equation}\label{E:err}
|E(t;x,y)|\; \le \; C\,e^{\,\alpha t}\,t^{-n/2-1} e^{-\gamma\,d^2(x,y)/t}.
\end{equation}
\end{lemma}

\begin{proof}
Apply the formula of \cite[Lemma 7.13]{Roe} to the spinor $\phi_j\,K_j$ to obtain 
\[
\D^2 (\phi_j\,K_j) = (\Delta\,\phi_j) K_j - 2 \nabla_{\nabla \phi_j} K_j +
\phi_j\,\D^2 K_j.
\]

\smallskip\noindent
Since both $K_j$ satisfy \eqref{E:one} we calculate 
\smallskip
\[
- E(t;x,y) = \sum_j\; (\Delta \phi_j(x) K_j(t;x,y) \psi_j(y) - 
2\,\nabla_{\nabla \phi_j(x)} K_j(t;x,y) \psi_j(y)).
\]
The claim now follows by applying \eqref{E:long} to $K_1$ and $K_2$ and \eqref{E:grad} to $\nabla K_2$, and using the standard estimates on $\nabla K_1$ on the closed manifold $DZ$.
\end{proof}

Denote by $\K(t) = \exp(-t \D^2)$ and $\K^a(t)$ the operators on $\zp$ with smoothing kernels $K(t;x,y)$ and $K^a (t;x,y)$, respectively. Because of the initial conditions $\K(t) \to \I$ and $\K^a (t) \to \I$ as $t \to 0$, we can write at the operator level
\begin{multline}\notag
\K(t) - \K^a(t) 
=\int_0^t \frac d{ds}\,\left(\K(s)\cdot\K^a(t-s)\right)\,ds \\
=\int_0^t \frac d{ds}\,\K(s)\cdot \K^a(t-s)\,ds + 
\int_0^t \K(s)\cdot\frac d{ds}\, \K^a(t-s)\,ds.
\end{multline}
Since
\[
\frac d{ds}\, \K(s) = - \D^2\, \K(s) = - \K(s)\, \D^2,
\] 

\medskip\noindent
the above can be written as
\begin{multline}\label{E:diff0}
\K(t) - \K^a(t) 
= \int_0^t \K(s)\cdot \left(-\D^2 + \frac d{ds}\right) \K^a(t-s)\,ds = \\
\int_0^t \K(s)\cdot \left(-\D^2 - \frac d{d(t-s)}\right) \K^a(t-s)\,ds
= \int_0^t \K(s)\cdot \E (t-s)\,ds,
\end{multline}

\medskip\noindent
where $\E(t)$ is the operator with the smoothing kernel $E(t;x,y)$. At the
level of smoothing kernels, formula \eqref{E:diff0} implies that
\[
K(t;x,x) - K^a(t;x,x) = \int_0^t \int_{\zp} K(s;x,z)\,E(t-s;z,x)\,dz\,ds.
\]
The $z$--integration in this formula extends only to $\supp_z E(t-s;z,x)
\subset \{\,z \in W_0\;|\;1/7 \le h(z) \le 6/7\,\}$. In particular, there 
exists $\ep > 0$ such that $\supp_z E(t-s;z,x) \subset N$, where $N = 
\{\,z \in W_0\;|\;d\,(z,\p W_0) \ge \ep\,\}$, and 
\begin{equation}\label{E:diff}
K(t;x,x) - K^a(t;x,x) = \int_0^t \int_N K(s;x,z)\,E(t-s;z,x)\,dz\,ds.
\end{equation}
Let $z \in N$ and restrict ourselves to $x \in W_k$ with $k \ge 1$. Then $\ep^2 + d^2(x,W_0)\;\le 
\;d^2(x,z)$ and we have the estimates
\[
|K(s;x,z)|\;\le\; C\,e^{\,\alpha s} s^{-n/2} e^{-\gamma\,\ep^2/s}\, 
e^{-\gamma\,d^2 (x,W_0)/s}\;\le\;C_1\,e^{\,\alpha s}\,e^{-\gamma\,d^2 (x,W_0)/s}
\]
for all $s > 0$ (see \eqref{E:long}) and 
\begin{multline}\notag
|E(t-s;z,x)|\;\le\;C\,e^{\,\alpha (t-s)} (t-s)^{-n/2-1} e^{-\gamma\,\ep^2/(t-s)} 
e^{-\gamma\,d^2 (x,W_0)/(t-s)}\\
\le\;C_2\,e^{\,\alpha (t-s)} e^{-\gamma\,d^2 (x,W_0)/(t-s)}
\end{multline}
for all $s \in (0,t)$ (see \eqref{E:err}). Combining these estimates with \eqref{E:diff}, we obtain
\begin{multline}\notag
|K(t;x,x) - K^a(t;x,x)| \\ \le\;
C_3\,e^{\,\alpha t}\,\int_0^t \int_N e^{-\gamma\,d^2 (x,W_0)(1/s + 1/(t-s))}\,dz\,ds 
\;\le\;C_4\,e^{\,\alpha t}\,e^{-\gamma\,d^2 (x,W_0)/t},
\end{multline}
where we used the inequality $1/t \le 1/s + 1/(t-s)$, which obviously holds 
for all $s \in (0,t)$. This completes the proof of Proposition \ref{P:diff}. 
\end{proof}

\begin{corollary}\label{C:diff}
For any given $T > 0$, there are positive constants $\gamma$ and $C$ such that, for all $t \in (0,T]$ and all $x \in W_k$ with $k \ge 1$, one has
\[
|K(t;x,x) - \tilde K(t;x,x)|\;\le\; C\,e^{-\gamma\,d^2(x,W_0)/t}.
\]
\end{corollary}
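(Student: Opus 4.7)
The proof is essentially a direct specialization of Proposition \ref{P:diff} to bounded time intervals. The plan is as follows.

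By Proposition \ref{P:diff}, there exist positive constants $\alpha$, $\gamma_0$, and $C_0$ such that for all $t > 0$ and all $x \in W_k$ with $k \ge 1$,
\[
|K(t;x,x) - \tilde K(t;x,x)|\;\le\; C_0\,e^{\,\alpha t}\,e^{-\gamma_0\,d^2(x,W_0)/t}.
\]
Restricting $t$ to the bounded interval $(0, T]$, the factor $e^{\alpha t}$ is bounded above by $e^{\alpha T}$, which is a finite constant depending only on $T$ (and $\alpha$). Setting $C = C_0 \, e^{\alpha T}$ and $\gamma = \gamma_0$ yields the inequality claimed in the corollary.

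The only thing to verify is that the constants $\alpha$, $\gamma_0$, $C_0$ from Proposition \ref{P:diff} are genuinely independent of $t$ and $x$ (they depend only on the geometry of $\zp$ and the Dirac bundle $\Sm$), which is immediate from the proof of that proposition. Hence the final form of the bound absorbs the time-dependent exponential into the multiplicative constant at the cost of losing uniformity as $T \to \infty$; this is harmless, since the corollary is applied in estimates like \eqref{E:btr1} and \eqref{E:btr2} where the heat-kernel trace is controlled on compact time intervals.

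There is no main obstacle here: the corollary is a trivial specialization, and no additional analytic input beyond Proposition \ref{P:diff} is required.
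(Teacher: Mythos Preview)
Your proof is correct and matches the paper's approach: the paper states Corollary~\ref{C:diff} immediately after the proof of Proposition~\ref{P:diff} with no separate argument, treating it as the evident specialization you describe. Bounding $e^{\alpha t}$ by $e^{\alpha T}$ on $(0,T]$ and absorbing it into the constant is exactly the intended step.
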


The following result is a slight generalization of the above corollary; it 
provides a good estimate on $|K(t;x,y) - \tilde K(t;x,y)|$ when $(x,y)$ 
is not necessarily on the diagonal but sufficiently close to it.

\begin{proposition}\label{P:diff2}
For any given $T > 0$, there are positive constants $\gamma$ and $C$ such that, for all $t \in (0,T]$ and all $x \in W_k$ and $y \in W_{\ell}$ with $k, \ell \ge 1$, one has 
\[
|K(t;x,y) - \tilde K(t;x,y)|\;\le\; C\,e^{-\gamma\,d^2/t},
\]
where $d^2$ is the minimum of $d^2(x,W_0)$ and $d^2(y,W_0)$.
\end{proposition}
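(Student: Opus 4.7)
The plan is to reuse the Duhamel setup from the proof of Proposition~\ref{P:diff} almost verbatim. The crucial observation is that for $x \in W_k$ and $y \in W_\ell$ with $k,\ell \ge 1$ the bump functions satisfy $\phi_1(x)=\psi_1(y)=0$ and $\phi_2(x)=\psi_2(y)=1$, so the approximate kernel \eqref{E:approx} equals $\tilde K(t;x,y)$ exactly at such $(x,y)$. Hence the Duhamel identity \eqref{E:diff0}, applied off the diagonal, specializes to
\[
K(t;x,y)-\tilde K(t;x,y)\;=\;\int_0^t\!\!\int_N K(s;x,z)\,E(t-s;z,y)\,dz\,ds,
\]
where $N\subset W_0$ is the compact set on which the error $E$ is supported.

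Next I would establish the one-sided separation estimates $d(x,z)\ge d(x,W_0)+\ep$ and $d(z,y)\ge d(y,W_0)+\ep$ for $z\in N$. These hold because any path in $\zp$ from $x\in W_k$, $k\ge 1$, to $z$ must cross the shared boundary $\p_+W_0=Y$ and then traverse at least distance $\ep$ inside $W_0$ to reach $z$ (by definition of $N$); likewise for the $y$--$z$ pair. Squaring yields $d^2(x,z)\ge d^2(x,W_0)+\ep^2$ and $d^2(z,y)\ge d^2(y,W_0)+\ep^2$, and combining this with the Gaussian bounds \eqref{E:long} for $K$ and \eqref{E:err} for $E$, the $\ep^2$ terms absorb the polynomial factors $s^{-n/2}$ and $(t-s)^{-n/2-1}$ uniformly for $s\in(0,t)$ and $t\in(0,T]$, leaving
\[
|K(s;x,z)|\,|E(t-s;z,y)|\;\le\;C\,e^{\alpha T}\,e^{-\gamma d^2(x,W_0)/s-\gamma d^2(y,W_0)/(t-s)}.
\]

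The main analytic step—and the only genuinely new estimate relative to Proposition~\ref{P:diff}—is to bound the mixed exponent from below in terms of $d^2/t$ with $d^2=\min(d^2(x,W_0),d^2(y,W_0))$. I would do this by splitting the $s$-integral at $t/2$: on $[0,t/2]$ the elementary bound $1/s\ge 2/t$ yields $d^2(x,W_0)/s\ge 2d^2/t$, and on $[t/2,t]$ the companion bound $1/(t-s)\ge 2/t$ yields $d^2(y,W_0)/(t-s)\ge 2d^2/t$. Each of the two sub-integrals is then bounded by $(t/2)\,e^{-2\gamma d^2/t}$; integrating the remaining constant $z$-density over the compact set $N$ and absorbing $t\,e^{\alpha T}$ into a constant on $(0,T]$ gives the claimed estimate after renaming $\gamma$. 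The only obstacle is bookkeeping—checking that the two ends of $W_0$ play symmetric roles so the constants from \eqref{E:long} and the derivation of \eqref{E:err} combine uniformly—but this is precisely the verification already carried out in the proof of Proposition~\ref{P:diff}.
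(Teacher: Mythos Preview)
Your argument is correct and follows the paper's approach almost exactly: the same Duhamel identity \eqref{E:diff0}, the same observation that $K^a(t;x,y)=\tilde K(t;x,y)$ for $x\in W_k$, $y\in W_\ell$ with $k,\ell\ge 1$, and the same Gaussian bounds on $K$ and $E$. The only difference is in the final estimate: instead of splitting the $s$-integral at $t/2$, the paper bounds both $d^2(x,W_0)$ and $d^2(y,W_0)$ below by their common minimum $d^2$ and then uses the one-line inequality $d^2/s + d^2/(t-s) \ge d^2/t$ directly, which is slightly cleaner but entirely equivalent to your splitting.
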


\begin{proof}
We will essentially follow the proof of Proposition \ref{P:diff} with the 
factor $e^{\,\alpha t}$ replaced by a constant on the bounded time interval. 
For any $x$ and $y$ as in the statement of the proposition, formula 
\eqref{E:diff0} implies that 
\begin{equation}\label{E:diff3}
K(t;x,y) - K^a(t;x,y) = \int_0^t \int_N K(s;x,z)\,E(t-s;z,y)\,dz\,ds,
\end{equation}
compare with \eqref{E:diff}. We can estimate 
\[
|K(s;x,z)|\;\le\; C_1 e^{-\gamma\,d^2 (x,W_0)/s},\quad
|E(t-s;z,y)|\;\le\; C_2 e^{-\gamma\,d^2 (y,W_0)/(t-s)}
\]
and use the obvious inequality
\[
-d^2(x,W_0)/s - d^2(y,W_0)/(t-s) \le -d^2\,(1/s + 1/(t-s)) \le - d^2/t
\]
to arrive at the desired estimate.
\end{proof}

\begin{remark}\label{R:diff}
For any integer $m \ge 0$ the statements of Corollary \ref{C:diff} and 
Proposition \ref{P:diff2} also hold if $K(t;x,y)$ and $\tilde K(t;x,y)$ 
are the smoothing kernels of the operators $\D^m\exp(-t \D^2)$ on 
respectively $\zp$ and $\tilde X$. The above proofs work with little 
change once we observe that both $K$ and $\tilde K$ solve the initial 
value problem \eqref{E:one} on their respective manifolds with matching
initial conditions.
\end{remark}


\subsection{Long-time behavior}\label{S:long-time}
In this section we will assume that the $L^2$--closure of $\D^+ (\tilde X)$ 
is invertible and derive certain uniform estimates on heat kernels over $\zp$.

Let $K(t;x,y)$ be the smoothing kernel of the operator $e^{-t \D^-\D^+}$ on 
$\zp$ and let $K_0(t;x,y) = K(t;x,y) - K_{P_+}(x,y)$, where $P_+$ is the 
projector onto (the finite dimensional) $\ker \D^+(\zp)$. 

\begin{proposition}\label{P:mu}
There exist positive constants $\mu$ and $C$ such that, for all $x, y \in \zp$
and all $t \ge 1$, 
\[
|K_0 (t;x,y)|\;\le\;C e^{-\mu t}.
\]
\end{proposition}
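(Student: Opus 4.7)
The plan is to extract a spectral gap for the self-adjoint non-negative operator $\D^-\D^+$ on $\zp$ from the Fredholmness hypothesis, convert it to $L^2$-operator-norm decay of $e^{-t\D^-\D^+}(I-P_+)$, and then upgrade to the pointwise kernel estimate using the Gaussian bounds \eqref{E:long} together with the bounded geometry of $\zp$.

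First, invertibility of the $L^2$-closure of $\D^+(\tilde X)$ combined with the excision principle of Section~\ref{S:fred} implies that the $L^2$-closure of $\D^+(\zp)$ is Fredholm. Hence $\D^-\D^+(\zp)$ is a self-adjoint non-negative Fredholm operator whose kernel equals $\ker \D^+(\zp) = \im P_+$. Because a self-adjoint Fredholm operator has $0$ as an isolated point of its spectrum, there exists $\mu_0 > 0$ such that the spectrum of $\D^-\D^+$ restricted to $(\ker \D^+(\zp))^\perp$ is contained in $[\mu_0, \infty)$. The spectral theorem then yields
\[
\|e^{-t\D^-\D^+}(I - P_+)\|_{L^2 \to L^2}\; \leq\; e^{-\mu_0 t}\qquad \text{for all } t \geq 0.
\]

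To upgrade to a pointwise kernel bound, set $A = e^{-\D^-\D^+/2}$ and, using that $P_+$ commutes with every spectral function of $\D^-\D^+$, factor
\[
e^{-t\D^-\D^+}(I - P_+)\; =\; A \cdot e^{-(t-1)\D^-\D^+}(I - P_+) \cdot A \qquad (t \geq 1).
\]
By \eqref{E:long} applied at time $1/2$, the smoothing kernel $K_A(x,y)$ of $A$ satisfies $|K_A(x,y)| \leq C_0\, e^{-\gamma' d^2(x,y)}$ uniformly in $x, y \in \zp$; since $\zp$ has bounded geometry, volumes of metric balls grow at most exponentially, so $C_1 := \sup_{x \in \zp} \|K_A(x,\cdot)\|_{L^2(\zp)}$ is finite. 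Using self-adjointness of $A$, the kernel of the triple product equals the $L^2$-pairing
\[
K_0(t;x,y)\; =\; \bigl\langle e^{-(t-1)\D^-\D^+}(I - P_+)\,K_A(\cdot, y),\; K_A(x, \cdot)\bigr\rangle_{L^2(\zp)},
\]
so Cauchy--Schwarz together with the operator-norm estimate above gives $|K_0(t;x,y)| \leq C_1^2\, e^{-\mu_0(t-1)}$, which is the desired bound with $\mu = \mu_0$ after absorbing the factor $e^{\mu_0}$ into the constant.

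The main obstacle is establishing the spectral gap, but this is a formal consequence of self-adjoint Fredholm theory once excision provides Fredholmness of $\D^+(\zp)$. The remaining sandwiching argument—placing an $L^2$-bounded operator between two smoothing factors to convert operator-norm decay into pointwise kernel decay—is standard and uses only the Gaussian estimates and bounded geometry already recorded earlier in Section~\ref{S:estimates}.
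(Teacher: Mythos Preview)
Your proof is correct and follows the same overall strategy as the paper: extract a spectral gap for $Q=\D^-\D^+$ on $(\ker\D^+)^\perp$ from the Fredholmness of $\D^+(\zp)$, deduce exponential $L^2\to L^2$ decay of $e^{-tQ}(I-P_+)$, and then upgrade to a pointwise kernel bound by sandwiching between smoothing operators.  The difference lies only in the choice of smoothing factor in the last step.  The paper inserts resolvent powers $(I+Q)^{-k/2}$ on each side and uses that $\delta_x\in L^2_{-k}$ with uniformly bounded norm (a Sobolev--embedding fact on manifolds of bounded geometry), together with the bound $\|(I+Q)^k(e^{-tQ}-P_+)\|\le C e^{-\mu t}$.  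You instead insert heat factors $A=e^{-Q/2}$ and use the Gaussian bound \eqref{E:long} at $t=1/2$ to see that $\|K_A(x,\cdot)\|_{L^2}$ is uniformly bounded.  Both are standard devices; yours has the minor advantage of recycling the Gaussian estimate already proved in this section rather than invoking the Sobolev machinery of the two auxiliary lemmas the paper records.
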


Before we go on to prove this proposition, we will need a few preliminary results. The operator $\D^-\D^+$ on $\zp$ will be temporarily called $Q$; note that $\ker Q = \ker \D^+$. We will first estimate the operator norm of $e^{-t Q}$ and then use the bounded geometry condition to derive the pointwise estimate as claimed.

\begin{lemma} 
Suppose that $\D^+ (\tilde X)$ is invertible then $Q$ has only discrete 
spectrum near zero (the ``discrete spectrum'' here means ``finitely many 
eigenvalues of finite multiplicity'').
\end{lemma}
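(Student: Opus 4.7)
The plan is to deduce discreteness of the spectrum of $Q$ near zero from the Fredholm property of $\D^+(\zp)$. First, I would invoke Proposition \ref{P:taubes} (applied with $\delta = 0$): since $\D^+(\tilde X)$ is invertible, the operator $\D^+(\zp): L^2_1(\zp,\Sm^+) \to L^2(\zp,\Sm^-)$ is Fredholm. Consequently $\ker \D^+(\zp)$ is finite-dimensional (which is the same as $\ker Q$ because $\langle Qu,u\rangle = \|\D^+ u\|^2$), and the range of $\D^+(\zp)$ is closed.

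Next, I would extract a quantitative coercivity estimate from Fredholmness. Viewing $\D^+$ as a bounded operator from the Hilbert space $(\ker \D^+)^{\perp} \cap L^2_1$ onto its closed range, the open mapping theorem provides a constant $C > 0$ such that $\|u\|_{L^2_1} \le C\,\|\D^+ u\|_{L^2}$ for every $u \in (\ker \D^+)^{\perp} \cap L^2_1$. Since $\|u\|_{L^2} \le \|u\|_{L^2_1}$, this yields
\[
\langle Q u, u\rangle \;=\; \|\D^+ u\|_{L^2}^2 \;\ge\; \delta\,\|u\|_{L^2}^2
\]
for all $u$ in the domain of $Q$ orthogonal to $\ker Q$, with $\delta = 1/C^2 > 0$.

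Finally, I would apply the spectral theorem for the non-negative self-adjoint operator $Q$: the coercivity above implies that the spectral projector $\chi_{[0,\delta)}(Q)$ equals the orthogonal projector onto $\ker Q$, which has finite rank. Therefore the spectrum of $Q$ in the interval $[0,\delta)$ consists of the single eigenvalue $0$ with multiplicity $\dim \ker \D^+(\zp) < \infty$, establishing the claimed discreteness near zero.

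I expect the only real content is the Fredholmness input from Proposition \ref{P:taubes}; everything after that is standard operator theory. If one wished to avoid invoking the open mapping theorem directly, an equivalent approach is to use Weyl's criterion together with the excision principle to show that $\operatorname{spec}_{\mathrm{ess}}(Q\text{ on }\zp) = \operatorname{spec}_{\mathrm{ess}}(Q\text{ on }\tilde X) \subset [c,\infty)$ for some $c > 0$, the latter inclusion following from the invertibility assumption on $\D^+(\tilde X)$.
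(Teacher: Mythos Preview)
Your proposal is correct and amounts to a detailed unpacking of what the paper records in one line as ``the usual parametrix argument''. The paper simply cites \cite[Lemma 6.2]{DW}; you instead invoke Proposition~\ref{P:taubes} (which itself rests on the parametrix/excision principle) to obtain Fredholmness of $\D^+(\zp)$, and then extract the spectral gap on $(\ker Q)^{\perp}$ via the open mapping theorem and the identity $\langle Qu,u\rangle = \|\D^+ u\|^2$. One small point worth making explicit: when you pass from the $L^2_1$-bijectivity of $\D^+$ on a complement of its kernel to the $L^2$-coercivity on $(\ker Q)^{\perp_{L^2}}$, you should note that $\ker\D^+$ is a finite-dimensional subspace of smooth sections, so $(\ker\D^+)^{\perp_{L^2}}\cap L^2_1$ is a closed complement in $L^2_1$ and the open mapping theorem applies there directly. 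Your closing remark about Weyl's criterion and excision of essential spectrum is in fact the more literal reading of the parametrix argument the paper has in mind, so either route is acceptable.
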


\begin{proof}
Since the operator $\D^+ (\tilde X)$ has bounded inverse, one can rely on
the usual parametrix argument; see for instance \cite[Lemma 6.2]{DW}.
\end{proof}

\begin{lemma}
Let $\mu > 0$ be the smallest non-zero eigenvalue of $Q$. For any integer $k 
\ge 0$, there is a constant $C_1 > 0$ such that\; $\| Q^k (e^{-t Q} - P_+) \|
\; \le\; C_1\,e^{-\mu t}$ for all $t > 1$.
\end{lemma}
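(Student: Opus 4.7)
The plan is to reduce everything to the functional calculus of the self-adjoint non-negative operator $Q = \D^-\D^+$. The preceding lemma tells us that $Q$ has only discrete spectrum near $0$, so $\mu$ is well-defined as the smallest nonzero eigenvalue and there is a genuine spectral gap $\sigma(Q) \cap (0,\mu) = \emptyset$. Moreover $P_+$ is precisely the spectral projection of $Q$ onto the eigenvalue $0$, so with $E_\lambda$ denoting the spectral resolution of $Q$ we may write
\[
Q^k(e^{-tQ} - P_+)\; =\; \int_{[\mu,\infty)} \lambda^k e^{-t\lambda}\,dE_\lambda.
\]
By the functional calculus this is a bounded operator whose norm equals $\sup_{\lambda \ge \mu} \lambda^k e^{-t\lambda}$.

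At this point the proof reduces to an elementary calculus estimate. First I would factor
\[
\lambda^k e^{-t\lambda}\; =\; e^{-\mu t}\cdot \lambda^k e^{-(\lambda - \mu)t},
\]
and then, for $t \ge 1$ and $\lambda \ge \mu$, bound the second factor by $\lambda^k e^{-(\lambda - \mu)}$. The latter is a continuous function of $\lambda \in [\mu,\infty)$ tending to $0$ at infinity, hence bounded by some constant $C_1 = C_1(k,\mu)$. This yields $\|Q^k(e^{-tQ} - P_+)\| \le C_1 e^{-\mu t}$ for all $t > 1$, which is the claimed estimate.

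I expect no serious obstacle: the only structural input is the spectral gap $\sigma(Q) \cap (0,\mu) = \emptyset$, which has already been secured by the previous lemma. Note in particular that the argument is indifferent to whether $Q$ has continuous spectrum above $\mu$, since the decay of $\lambda^k e^{-t\lambda}$ on $[\mu,\infty)$ handles that regime uniformly; no further use of the periodic structure of $\tilde X$ or $\zp$ is needed at this step, which will only re-enter in the subsequent passage from the operator-norm bound to the pointwise bound of Proposition~\ref{P:mu}.
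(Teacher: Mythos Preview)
Your proof is correct and follows essentially the same route as the paper: both reduce via the spectral theorem to the bound $\|Q^k(e^{-tQ}-P_+)\| \le \sup_{\lambda \ge \mu}\lambda^k e^{-\lambda t}$, then handle the supremum by elementary calculus. The paper computes the supremum explicitly (it equals $\mu^k e^{-\mu t}$ when $t\ge k/\mu$ and $(k/t)^k e^{-k}$ otherwise), while you factor out $e^{-\mu t}$ and bound the remainder using $t\ge 1$; these are cosmetic variants of the same argument.
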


\begin{proof}
From functional analysis we obtain $\| Q^k (e^{-t Q} - P_+) \|\; \le\; 
\sup\,\{\,\lambda^k e^{-\lambda t}\,|\allowbreak \mu \le \lambda\,\}$.
This supremum equals $\mu^k e^{-\mu t}$ if $t \ge k/\mu$,\; and 
$(k/t)^k e^{-k}$ otherwise. The result now follows. 
\end{proof}

This completes the derivation of the operator norm estimates. To pass to pointwise estimates, we need two more lemmas. 

\begin{lemma}
For all even integers $k > 0$, the operator $(I + Q)^{\,k/2}: L^2_{m+k} 
(\zp;\Sm^+) \to L^2_m (\zp;\Sm^+)$ is an isomorphism.
\end{lemma}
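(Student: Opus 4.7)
My plan is to reduce to the base case $k=2$ by induction on $j=k/2$, and to handle $k=2$ using essential self-adjointness and non-negativity of $Q$ combined with standard elliptic regularity on manifolds of bounded geometry.

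For $k=2$, note first that $Q=\D^-\D^+$ is a non-negative formally self-adjoint second-order elliptic operator on $\zp$. Since $\zp$ is a complete Riemannian manifold of bounded geometry, $\D$ is essentially self-adjoint on $C_0^\infty(\zp;\Sm)$ by Chernoff's theorem, and so is its chiral square $Q$ on $C_0^\infty(\zp;\Sm^+)$. The identity $\langle Qu,u\rangle=\|\D^+u\|^2\ge 0$ shows that the $L^2$-spectrum of $I+Q$ is contained in $[1,\infty)$, so the closure of $I+Q$ is a bijection from the domain of $Q$ onto $L^2$ with inverse of operator norm at most $1$. In particular, $I+Q$ is injective on the smaller subspace $L^2_{m+2}$.

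I would then promote this $L^2$-inversion to an isomorphism of Sobolev spaces via the Garding-type inequality
\[
\|u\|_{L^2_{m+2}(\zp;\Sm^+)}\;\le\;C\,\bigl(\|(I+Q)u\|_{L^2_m(\zp;\Sm^+)}+\|u\|_{L^2(\zp;\Sm^+)}\bigr),
\]
valid uniformly for any second-order elliptic operator with bounded coefficients on a bounded-geometry manifold. The standard proof covers $\zp$ by a uniformly locally finite family of small geodesic balls on which the metric, the connection, and the symbol of $Q$ are controlled, applies the usual Euclidean interior elliptic estimate in each ball, and sums via an associated partition of unity. Combining this estimate with the $L^2$-inverse already established shows that $(I+Q)\colon L^2_{m+2}(\zp;\Sm^+)\to L^2_m(\zp;\Sm^+)$ is a continuous bijection with continuous inverse, finishing the base case.

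The inductive step is routine: writing $k=2j$, one factors $(I+Q)^j=(I+Q)\circ(I+Q)^{j-1}$, applies the inductive hypothesis with $m$ replaced by $m+2$ to see that $(I+Q)^{j-1}\colon L^2_{m+2j}\to L^2_{m+2}$ is an isomorphism, and then composes with the base case applied to $L^2_{m+2}\to L^2_m$. The main obstacle lies entirely in the base case, namely the careful invocation of bounded-geometry elliptic analysis; essential self-adjointness of $\D$ requires completeness of $\zp$, and the uniform Sobolev estimate requires lower bounds on the injectivity radius and upper bounds on curvature, both of which are built into the end-periodic hypothesis.
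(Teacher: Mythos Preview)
Your approach is correct in spirit but takes a genuinely different route from the paper's. The paper's proof is a one-liner: it \emph{defines} the Sobolev norm on $L^2_m(\zp;\Sm^+)$ for $m\ge 0$ by
\[
\|s\|_{L^2_m}^2 = \|s\|^2 + \|\D s\|^2 + \cdots + \|\D^m s\|^2,
\]
and for $m<0$ by duality, citing Roe for the equivalence of this with the standard definition on bounded-geometry manifolds. With these norms, $(I+Q)^{k/2}$ is tautologically an isomorphism once one knows $\ker(I+Q)=0$; all the analytic content (essentially your G{\aa}rding inequality) is absorbed into the norm-equivalence citation. Your argument unpacks that equivalence explicitly through spectral theory plus uniform interior elliptic estimates, which is more self-contained but correspondingly longer.

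One point deserves care. Your surjectivity step for the base case reads ``combining this estimate with the $L^2$-inverse'': given $f\in L^2_m$ you invoke the spectral inverse to produce $u$, then bootstrap. This works cleanly only when $m\ge 0$, so that $f\in L^2_m\subset L^2$. For $m<0$ --- and the application in the paper is precisely $m=-k$ --- the target $f$ need not lie in $L^2$, so the spectral inverse is not directly available. You should close this by duality: since $I+Q$ is formally self-adjoint, the adjoint of $(I+Q)\colon L^2_{m+2}\to L^2_m$ is $(I+Q)\colon L^2_{-m}\to L^2_{-m-2}$, and the isomorphism for non-negative indices then yields it for negative ones. This is routine, but as written your proof does not cover the case actually used.
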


\begin{proof}
Since $\ker (I + Q) = 0$ this is immediate if we define the Sobolev space $L^2_m(\zp;\Sm^+)$ for $m \ge 0$ as the completion of $C^{\infty}_0(\zp;\Sm^+)$ in the norm 
\[
\|s\|_{L^2_m}\;=\;\left(\,\|s\|^2 +\|\D s\|^2 +\ldots +\|\D^m s\|^2\,\right)^{1/2},
\]
and for $m < 0$ as the dual space of $L^2_{-m} (\zp;\Sm^+)$, cf. \cite[Definition 2.6]{roe1}. This definition is equivalent to the standard one because of the bounded geometry condition. 
\end{proof}

\begin{lemma}
For any $k > n/2$ and $x \in \zp$ we have $\delta_x \in L^2_{-k}(\zp;\Sm^+)$. 
Moreover, there is a constant $C_2 > 0$ such that, for all $x \in \zp$, 
\[
\|\,\delta_x\,\|_{L^2_{-k}}\, \le \; C_2.
\]
\end{lemma}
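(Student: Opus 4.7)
The plan is to reduce this to the pointwise Sobolev embedding theorem, using bounded geometry to get uniformity in $x$.

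By definition of the negative Sobolev space as the dual of $L^2_k(\zp;\Sm^+)$, I would first write
\[
\|\delta_x\|_{L^2_{-k}}\;=\;\sup\,\bigl\{\,|u(x)|\,:\,u\in C^\infty_0(\zp;\Sm^+),\ \|u\|_{L^2_k}\le 1\,\bigr\},
\]
so that the lemma is equivalent to the statement that the evaluation map $u\mapsto u(x)$ is continuous on $L^2_k$ with norm bounded independently of $x\in\zp$.

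Next I would localize. Fix $x\in\zp$, choose a geodesic ball $B(x,r)\subset\zp$ of radius $r$ less than the (uniform) injectivity radius, and let $\chi$ be a smooth cutoff supported in $B(x,r)$ with $\chi\equiv 1$ near $x$. Because $(\zp,\Sm^+)$ has bounded geometry, one can choose $r$ and $\chi$ (in normal coordinates / a synchronous frame centered at $x$) so that the $C^m$-norms of $\chi$, as well as the coefficients of the metric and the connection on $B(x,r)$, are bounded independently of $x$. For $u\in C^\infty_0(\zp;\Sm^+)$ with $\|u\|_{L^2_k}\le 1$, the function $\chi u$ pulls back in these uniform coordinates to a section of the trivialized bundle on the Euclidean ball whose ordinary Sobolev $H^k$-norm is controlled by $\|u\|_{L^2_k}$, with a constant independent of $x$; here I would use the equivalence (valid thanks to bounded geometry) of the intrinsic norm $\|\cdot\|_{L^2_k}$, defined via powers of $\D$, with the covariant-derivative norm $\sum_{j\le k}\|\nabla^j(\cdot)\|_{L^2}$.

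Finally, the classical Sobolev embedding $H^k(\R^n)\hookrightarrow C^0(\R^n)$ for $k>n/2$ gives $|u(x)|=|(\chi u)(x)|\le C\|\chi u\|_{H^k}\le C_2\|u\|_{L^2_k}\le C_2$, with $C_2$ independent of $x$. This shows both that $\delta_x\in L^2_{-k}(\zp;\Sm^+)$ and that $\|\delta_x\|_{L^2_{-k}}\le C_2$ uniformly. The only nontrivial point, and the one worth care, is verifying that the bounded-geometry hypothesis really does make the constants in the equivalence of Sobolev norms and in the cutoff/coordinate change uniform in $x$; this is exactly the content of Roe's analysis in \cite[Section 2]{roe1}, so there is nothing new to establish here.
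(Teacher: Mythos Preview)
Your proof is correct and follows exactly the approach the paper has in mind: the paper's own proof is a two-line reference, saying the first claim is standard and the uniform bound follows from bounded geometry as in \cite[Proposition~5.4]{roe1}. Your sketch simply unpacks that reference---duality reduces the claim to a uniform Sobolev embedding, and bounded geometry supplies the uniform constants---so there is no substantive difference.
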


\begin{proof}
The first claim is standard. The second follows using the bounded geometry 
condition as in the proof of Proposition 5.4 of \cite{roe1}.
\end{proof}

\begin{proof}[Proof of Proposition \ref{P:mu}] Let $k > n/2$ be an 
even integer. Since $(I + Q)^{\;k/2}: L^2 (\zp;\Sm^+) \to L^2_{-k} (\zp;\Sm^+)$ 
is an isomorphism, we conclude that
\[
\| (I + Q)^{-k/2} \delta_x \|_{L^2}\; \le\; C_3\, \|\,\delta_x\,\|_{L^2_{-k}}
\]
and therefore
\[
\| (I + Q)^{-k/2} \delta_x \|_{L^2}\; \le\, C_4.
\]
Next, write
\begin{multline}\notag
K_0 (t;x,y)\;=\; (\delta_x,(e^{-tQ} - P_+) \delta_y)\;= \\ 
(\delta_x,(I + Q)^{-k/2} (I + Q)^{\,k} (e^{-tQ} - P_+) (I + Q)^{-k/2} \delta_y)
\end{multline}
using the fact that $Q$ and $e^{-tQ} - P_+$ commute with each other. Since 
$I + Q$ is self-adjoint, we can write
\[
K_0 (t;x,y)\; =\; ((I + Q)^{-k/2}\delta_x,(I + Q)^{\,k} (e^{-tQ} - P_+) 
(I + Q)^{-k/2}\delta_y)
\]
and estimate 
\[
|K_0 (t;x,y)|\;\le\;C_4^2\;\|\,(I+Q)^{\,k} (e^{-tQ} - P_+)\|\;\le\;C\,e^{-\mu t}.
\]
\end{proof}

\begin{proposition}\label{P:Ktilde}
There exist positive constants $\mu$ and $C$ such that, for all $x, y \in 
\tilde X$ and all $t \ge 1$,
\[
|\tilde K(t;x,y)|\;\le\;C e^{-\mu t}.
\]
\end{proposition}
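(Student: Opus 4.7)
The plan is to adapt the proof of Proposition \ref{P:mu} to $\tilde X$; the hypothesis that the $L^2$--closure of $\D^+(\tilde X)$ is invertible makes things simpler because there is no $L^2$ kernel to subtract off, so no projection term appears.

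First, I would observe that the invertibility of $\D^+(\tilde X):L^2_1\to L^2$ implies a spectral gap for the non-negative self-adjoint operator $Q = \D^-\D^+$ on $\tilde X$. Indeed, boundedness of the inverse gives $\|\D^+ u\|_{L^2}\ge c\,\|u\|_{L^2}$ for some $c>0$, hence $\langle Qu,u\rangle\ge c^2\|u\|^2$, so that the spectrum of $Q$ is contained in $[\mu,\infty)$ for $\mu = c^2>0$. By the spectral theorem, for any integer $k\ge 0$ and every $t\ge 1$,
\[
\|\,Q^k e^{-tQ}\,\|\; \le\; \sup_{\lambda\ge \mu}\,\lambda^k e^{-\lambda t}\;\le\; C_k\,e^{-\mu t}.
\]

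Second, I would upgrade this operator norm bound to the desired pointwise kernel bound by the same functional-analytic argument used at the end of the proof of Proposition \ref{P:mu}. Since $\tilde X$ has bounded geometry (being a cover of the compact manifold $X$), for any even integer $k>n/2$ the distribution $\delta_x$ lies in $L^2_{-k}(\tilde X;\Sm^+)$ with $\|\delta_x\|_{L^2_{-k}}\le C_2$ uniformly in $x\in\tilde X$, and $(I+Q)^{k/2}:L^2_k\to L^2$ is a topological isomorphism. Hence $\|(I+Q)^{-k/2}\delta_x\|_{L^2}\le C_4$ uniformly in $x$. Writing
\[
\tilde K(t;x,y)\;=\;\bigl\langle\,(I+Q)^{-k/2}\delta_x,\;(I+Q)^k e^{-tQ}\,(I+Q)^{-k/2}\delta_y\,\bigr\rangle,
\]
expanding $(I+Q)^k=\sum_{j=0}^k\binom{k}{j}Q^j$, and applying the operator norm estimate from the first step term by term, one obtains $|\tilde K(t;x,y)|\le C e^{-\mu t}$ for all $t\ge 1$, as required.

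The only step that requires any genuine checking is the uniform estimate $\|\delta_x\|_{L^2_{-k}}\le C_2$ on $\tilde X$, but this holds by bounded geometry exactly as in the proof of Proposition 5.4 of \cite{roe1} (and has already been invoked for $\zp$ in the proof of Proposition \ref{P:mu}). Given that the spectral gap comes directly from the invertibility hypothesis, no delicate exhaustion or patching argument is needed; I therefore do not anticipate any serious obstacle.
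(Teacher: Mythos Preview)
Your proof is correct and matches the paper's approach exactly: the paper's own proof simply reads ``Identical to the proof of Proposition~\ref{P:mu}, keeping in mind that $\tilde X$ has bounded geometry and that $\ker \D^+(\tilde X) = 0$.'' Your direct derivation of the spectral gap from the invertibility of $\D^+(\tilde X)$ (in place of the parametrix/discrete-spectrum lemma used for $\zp$) is the natural simplification once $P_+=0$.
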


\begin{proof}
Identical to the proof of Proposition \ref{P:mu}, keeping in mind that 
$\tilde X$ has bounded geometry and that $\ker \D^+(\tilde X) = 0$.
\end{proof}

\begin{proposition}\label{P:KP+}
There exist positive constants $C$ and $\delta$ such that for all $x \in W_0$
and $k \ge 1$, one has
\[
|K_{P_+} (x,x)|\;\le\; C\,e^{-\delta\,k}.
\]
\end{proposition}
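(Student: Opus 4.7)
The goal is to bound the diagonal of the kernel of the orthogonal projector $P_+$ onto $\ker\D^+(\zp)$ on the end. My plan is to reduce the matter to pointwise exponential decay of the (finitely many) $L^2$ harmonic spinors themselves, and then to obtain such decay from the invertibility of $\D^+(\tilde X)$ via Fourier--Laplace analysis.

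\textbf{Step 1 (reduction to harmonic spinors).} Since $\D^+(\zp)$ is Fredholm, $\ker \D^+(\zp)$ is finite-dimensional; choose an $L^2$--orthonormal basis $u_1,\ldots,u_N$. The Schwartz kernel of $P_+$ is then
\[
K_{P_+}(x,y)\;=\;\sum_{i=1}^{N}\,u_i(x)\otimes u_i(y)^{*},
\]
so it suffices to show that each $u_i$ decays pointwise at the rate $|u_i(x+k)|\le C_i\,e^{-\delta k}$ for $x\in W_0$ and $k\ge 1$. (The statement of the proposition should really read $|K_{P_+}(x+k,x+k)|\le C e^{-\delta k}$, matching the application in Proposition~\ref{P:long}.)

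\textbf{Step 2 ($L^2$--exponential decay of harmonic spinors).} Because we are assuming that the $L^2$--closure of $\D^+(\tilde X)$ is invertible, Proposition~\ref{P:taubes} tells us that the family $\D^+_z(X)$ is invertible on $|z|=1$. Analyticity of $z\mapsto \D^+_z(X)$ together with compactness of the unit circle gives a $\delta>0$ such that $\D^+_z(X)$ is invertible throughout the annulus $e^{-2\delta}\le |z|\le e^{2\delta}$. By Proposition~\ref{P:taubes} again, $\D^+(\tilde X)$ is therefore Fredholm (and invertible) on $L^2_{\pm\delta}$, and the excision principle mentioned in Section~\ref{S:fred} ensures $\D^+(\zp)$ is Fredholm on $L^2_{\pm\delta}$ with
\[
\ker_{L^2_{\delta}}\D^+(\zp)\;=\;\ker_{L^2}\D^+(\zp),
\]
because no spectral points are crossed as the weight varies in $(-\delta,\delta)$ (cf. the change of index formula~\eqref{E:change}). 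Consequently each $u_i$ lies in $L^2_{\delta}$, i.e.\ $e^{\delta f}u_i\in L^2(\zp)$. Restricting to the end, this yields
\[
\|u_i\|_{L^2(W_k)}\;\le\;C\,e^{-\delta k}\quad\text{for all }k\ge 1.
\]

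\textbf{Step 3 (upgrading $L^2$ to pointwise estimates).} Since $\D^+ u_i = 0$, by Weitzenb\"ock we also have $\D^-\D^+ u_i = 0$, so $u_i$ is smooth and satisfies a uniformly elliptic second-order equation on a manifold of bounded geometry. Interior elliptic regularity on balls of fixed radius $r$ (small enough to lie within the injectivity radius) combined with Sobolev embedding $L^2_m\hookrightarrow C^0$ for $m>n/2$ yields, uniformly in the center point $p$,
\[
|u_i(p)|\;\le\;C_0\,\|u_i\|_{L^2(B(p,r))}.
\]
Taking $p=x+k$ with $x\in W_0$, the ball $B(x+k,r)$ is contained in $W_{k-1}\cup W_k\cup W_{k+1}$, and so by Step~2
\[
|u_i(x+k)|\;\le\;C_0\bigl(\|u_i\|_{L^2(W_{k-1})}+\|u_i\|_{L^2(W_k)}+\|u_i\|_{L^2(W_{k+1})}\bigr)\;\le\;C'\,e^{-\delta k}.
\]

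\textbf{Step 4 (assembly).} Squaring and summing over the finite basis,
\[
|K_{P_+}(x+k,x+k)|\;\le\;\sum_{i=1}^{N}|u_i(x+k)|^{2}\;\le\;N\,(C')^{2}\,e^{-2\delta k},
\]
which is the desired bound (after renaming the constants).

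\textbf{Main obstacle.} The delicate step is Step 2: translating Fredholmness/invertibility on the unit circle into genuine exponential $L^2$--decay of kernel elements, rather than merely $L^2$--integrability. The argument above relies on the assertion that varying the weight within an interval free of spectral points does not enlarge the kernel, which is the essence of the change of index formula~\eqref{E:change}; one should verify (as in~\cite{MRS}) that no kernel element is gained or lost when the weight is pushed from $0$ to $\delta$, so that every $L^2$ kernel element automatically lies in $L^2_\delta$. The remaining steps are standard consequences of bounded geometry and finite-dimensionality of $\ker \D^+(\zp)$.
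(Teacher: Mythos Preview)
Your proof is correct and follows essentially the same strategy as the paper: reduce to an orthonormal basis of $\ker\D^+(\zp)$, use invertibility of $\D^+_z(X)$ on an annulus about $|z|=1$ to show that each basis element actually lies in $L^2_\delta$ for some small $\delta>0$, and then pass to pointwise decay. The paper cites \cite{MRS} for precisely the claim you isolate in Step~2 (that the $L^2$ kernel elements, after conjugation by $e^{\delta h}$, form a basis for the kernel of the conjugated operator), so your identification of this as the delicate step is on the mark.

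The one methodological difference is in how pointwise bounds are extracted. You go through interior elliptic estimates and Sobolev embedding on balls of uniform radius, which is perfectly valid on a manifold of bounded geometry. The paper instead applies \cite[Proposition~2.9]{roe1} directly to the projector onto $\ker(\D^+(\zp)-\delta\,dh)$: since this is a finite-rank smoothing operator on a manifold of bounded geometry, its kernel is uniformly bounded, and hence the conjugated spinors $e^{\delta h}\phi_i$ are uniformly bounded. This is a slicker route to the same conclusion and avoids the explicit regularity bootstrap, but your argument is a fine substitute.
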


\begin{proof}
Let $\phi_i$ be an orthonormal basis in $\ker \D^+(\zp)$ then $|K_{P_+}(x,x)| 
= \sum\,|\phi_i (x)|^2$. Let $h: \zp\to \mathbb R$ be a smooth function whose 
restriction to $\txp$ has the property that $h(x + 1) = h(x) + 1$ with 
respect to the covering transformation action. According to \cite{MRS}, the 
invertibility of $\D^+(\tilde X)$ implies that there is a small $\delta > 0$ 
such that the spinors $e^{\,\delta\, h(x)}\phi_i(x)$ form a basis in $\ker 
(\D^+(\zp) - \delta\,dh)$. The kernel of the projector onto $\ker (\D^+(\zp) 
- \delta\,df)$ is uniformly bounded by \cite[Proposition 2.9]{roe1} hence so
are the spinors $e^{\,\delta\, h(x)}\phi_i(x)$. Therefore, one can find $C > 0$ 
such that 

\[
|K_{P_+}(x,x)| = \sum\, |\phi_i (x)|^2 = e^{-2\delta\,h(x)}\,
\sum\,|e^{\delta\,h(x)}\phi_i(x)|^2 \le C\,e^{-2\delta\,h(x)},
\]

\bigskip\noindent
and the result follows. 
\end{proof}

\bigskip


\end{document}